\definecolor{dark-blue}{rgb}{0,0,0.6}
\definecolor{Purple}{rgb}{0.2,0,0.25}
\newtheorem{thm}{Theorem}[section]
\newtheorem{cor}[thm]{Corollary}
\newtheorem{lem}[thm]{Lemma}
\newtheorem{prop}[thm]{Proposition}
\newtheorem{defin}[thm]{Definition}
\theoremstyle{definition}
\newtheorem{expl}[thm]{Example}
\newtheorem{remark}[thm]{Remark}
\newtheorem{alg}[thm]{Algorithm}
\newtheorem{assumption}[thm]{Assumption}
\newcommand{\wt}{\widetilde}
\newcommand{\dom}{\textnormal{dom}}
\newcommand{\Int}{\textnormal{Int}}
\newcommand{\cl}{\overline}
\newcommand{\R}{\mathbb{R}}
\newcommand{\N}{\mathbb{N}}
\newcommand{\argmin}{\textnormal{argmin}}
\newcommand{\sign}{\textnormal{sign}}
\newcommand{\bref}[1]{\textbf{\ref{#1}}} 
\newcommand{\beqref}[1]{\textbf{(\ref{#1})}} 
\subjclass[2010]{90C25, 49M27, 90C31, 47J25, 90C30, 54C30, 26B25}
\keywords{Bregman divergence, Lipschitz continuity, minimization, TEPROG, telescopic proximal gradient method, strongly convex}
\begin{document}
\date{March 19, 2019}
\title[A Telescoping Bregmanian Proximal Gradient Method]{A Telescoping Bregmanian Proximal Gradient Method  Without the Global Lipschitz Continuity Assumption}
\author{Daniel Reem}
\address{Daniel Reem, Department of Mathematics, The Technion - Israel Institute of Technology, 3200003 Haifa, Israel.} 
\email{dream@technion.ac.il}
\author{Simeon Reich}
\address{Simeon Reich, Department of Mathematics, The Technion - Israel Institute of Technology, 3200003 Haifa, Israel.} 
\email{sreich@technion.ac.il}
\author{Alvaro De Pierro}
\address{Alvaro De Pierro, CNP$\textnormal{q}$, Brazil}
\email{depierro.alvaro@gmail.com}

\maketitle

\begin{abstract}
The problem of minimization of the sum of two convex functions  
has various theoretical and real-world applications. One of 
the popular methods for solving this problem is the proximal 
gradient method (proximal forward-backward algorithm). 
A very common assumption in the use of this method is that 
the gradient of the smooth term is globally Lipschitz continuous. However, this assumption 
is not always satisfied in practice, thus casting a limitation 
on the method. In this paper, we discuss, in a wide class of 
finite and infinite-dimensional spaces, a new variant 
of the proximal gradient method  which does not impose the 
above-mentioned  global Lipschitz continuity assumption. 
A key contribution of the method is the dependence of the iterative steps 
on a certain telescopic decomposition of the constraint set into subsets. 
Moreover, we use a Bregman divergence in the proximal 
forward-backward operation. Under certain practical 
conditions, a non-asymptotic rate of convergence (that is, 
in the function values) is established, as well as the weak 
convergence of the whole sequence to a minimizer. We also 
obtain a few auxiliary results of independent interest. 
\end{abstract}

\section{Introduction}\label{sec:Intro}\subsection{Background} 
The problem of minimization of the sum of two convex functions appears in various 
areas in science and technology, including machine learning, inverse problems, image processing 
and signal processing \cite{BachJenattonMairalObozinski2012jour}, \cite{BeckTeboulle2009jour}, \cite{ BerteroBoccacciDesideraVicidomini2009jour}, \cite{CombettesWajs2005jour}, \cite{De-MolDe-VitoRosasco2009jour}, \cite{FigueiredoBioucas-DiasNowak2007jour}, \cite{ParikhBoyd2014jour}, \cite{Tseng2010jour}. Here, we are given an objective function, which is the sum of two convex functions, one of them is smooth (and may vanish identically), but the other may not be differentiable. The goal is to find the minimal value of the objective 
function over the constraint set, namely over some given constraint subset of the ambient space, and possibly also to find a minimizer (if it exists). 

One of the most popular methods for solving 
this problem is the proximal gradient method (also called the ``forward-backward algorithm'' or the ``proximal point algorithm''). Initial versions of this  algorithm, in various settings and forms,  were studied by Martinet \cite{Martinet1970jour}, Rockafellar \cite{Rockafellar1976jour}, Bruck and Reich \cite{BruckReich1977jour}, Passty \cite{Passty1979jour}, Br\'ezis and Lions \cite{BrezisLions1978jour}, and Nevanlinna and Reich \cite{NevanlinnaReich1979jour}, and since then many more developments have occurred and  many more authors have been involved in the investigation of this algorithm. In its very basic form, this algorithm produces iterations,  which are obtained by minimizing, in the iterative step, the sum of the objective function and a quadratic term. 

A generalization of this basic form calls for replacing the quadratic term by a Bregman divergence (Bregman distance, Bregman measure of distance) and for replacing the objective function by an approximation of it. The Bregman  divergence is a certain  substitute for a distance, induced by a given well-chosen function. It  has found various applications, among them in  optimization, nonlinear analysis, inverse problems, machine learning and computational geometry.

A very common  assumption (implicit or explicit) in the use of the various forms (Bregmanian or not) of the proximal gradient algorithm is that the gradient of the smooth term in the objective function is globally Lipschitz continuous. While this assumption holds in several important cases, it is not always satisfied (for example, in some instances of image processing  algorithms \cite[Subsection 5.1]{BauschkeBolteTeboulle2017jour}, \cite[p. 9]{BerteroBoccacciDesideraVicidomini2009jour}, \cite[pp. 1063, 1065]{MarkhamConchello2001jour}, \cite[p. 262]{DeyEtal2006jour}; see also Section \bref{sec:ModelProblem} and Example \bref{ex:ell_p-ell_1} below). This lack of global Lipschitz continuity casts a limitation on the method. As a matter of fact, to the best of our knowledge, only very few papers \cite{BauschkeBolteTeboulle2017jour}, \cite{Bello-CruzNghia2016jour}, \cite{BolteSabachTeboulleVaisbourd2018jour}, \cite{Cohen1980jour}, \cite{Nguyen2017jour} do not impose this global Lipschitz condition (more details about these papers and related works can be found in Section \bref{sec:FurtherComparison}). 

\subsection{Contributions:}\label{subsec:Contributions}  In this paper, we discuss, in a general setting,  new variants of the proximal gradient method  which do  not require the above-mentioned  global Lipschitz continuity assumption.  We consider a broad class of finite- and infinite-dimensional spaces  (real reflexive Banach spaces), allow constrained minimization and backtracking, and use a Bregman divergence in the  iterative step instead of just a quadratic term (this gives more flexibility to the users). The method, assumptions and results discussed in our paper are considerably different from any other relevant work (see  Section \bref{sec:FurtherComparison}). 

A key and novel contribution of our method is the dependence of the iterations on  certain well-chosen subsets which form a telescopic decomposition of the constraint set. More precisely, the forward-backward operation in a given iteration is performed over a well-chosen subset of the constraint set and not over the entire constraint set;  by ``telescopic  decomposition'' we mean that the well-chosen subset in a given iteration is contained in the corresponding subset of the next iteration and the union of all of these subsets is the entire constraint set. Hence we regard our method as being a ``telescopic proximal gradient method'', call it TEPROG and regard the above-mentioned family of well-chosen sets as the ``telescopic  sequence''. 

A major advantage of the above-mentioned dependence of the iterative steps  on well-chosen subsets is that it gives the users a lot of flexibility and, in particular, allows them to assume that the gradient of the smooth term is Lipschitz continuous merely on the above-mentioned subsets (an assumption which frequently holds since these subsets are often bounded), rather than globally Lipschitz continuous.  Based on results which have  recently been established  in \cite{ReemReichDePierro2019jour(BregmanEntropy)},  we obtain, under certain practical conditions, explicit non-asymptotic 
rates of convergence to the optimal value (that is, convergence of the function values to the optimal value), as well as the weak convergence of the whole sequence to a minimizer. In fact, we show that sometimes sublinear  non-asymptotic rates of convergence can be obtained, and sometimes these non-asymptotic rates can be arbitrarily close to the sublinear rate. 

Another contribution of our paper is a few auxiliary results which seem to be of independent interest. One of them is Lemma \bref{lem:FB_k} which generalizes a key result in  \cite{BeckTeboulle2009jour} (namely  \cite[Lemma 2.3]{BeckTeboulle2009jour}).  A second result is Lemma \bref{lem:zInSU} which presents sufficient conditions for the minimizer which appears in the proximal operation to be an interior point.

\subsection{Paper layout:} In Section \bref{sec:FurtherComparison}, we further compare our method and results to several relevant ones from the literature. In Section \bref{sec:ModelProblem}, we illustrate the general minimization problem that  we solve in this paper using a useful model problem. In Section \bref{sec:Preliminaries}, we introduce some basic notation and definitions and also recall well-known facts. In Section \bref{sec:ISTA-Bregman}, we present our telescopic proximal method. In Section \bref{sec:Convergence}, we present the main convergence results. In Section \bref{sec:Examples}, we present several relevant examples. The proofs of the convergence results, as well as relevant auxiliary assertions, are presented in Section \bref{sec:Proofs}. We conclude the paper in Section \bref{sec:Conclusions}. Section \bref{sec:Appendix} is a short appendix which contains the proofs of some assertions.

\section{Further Comparison to the Literature}\label{sec:FurtherComparison} 
As far as we know, our method and results are new. In particular, as far as we know the idea regarding the telescopic sequence of sets on which the gradient of the smooth term is Lipschitz continuous (and not necessarily Lipschitz continuous on the entire constraint set) is novel. However, as mentioned in Section \bref{sec:Intro}, there are several works which are somewhat related to our paper. In this section we present more details about these works and elaborate more on some of the main differences between them and our own. 

The relevant works are \cite{BauschkeBolteTeboulle2017jour}, \cite{BeckTeboulle2009jour}, \cite{Bello-CruzNghia2016jour}, \cite{BolteSabachTeboulleVaisbourd2018jour}, \cite{Cohen1980jour}, \cite{Nguyen2017jour}, \cite{Tseng2008prep}. In a nutshell, in most of them (with the exception of \cite{Cohen1980jour}) the goal is to minimize an objective function $F=f+g$. In addition, in most of these works (with the exception of \cite{Bello-CruzNghia2016jour}, \cite{Cohen1980jour}, \cite{Nguyen2017jour}) the setting is a finite-dimensional Euclidean space. With the exception of \cite{Tseng2008prep}, in all of these works the proximal operations do not depend on a special sequence of sets, and only in \cite{BauschkeBolteTeboulle2017jour}, 
\cite{BolteSabachTeboulleVaisbourd2018jour}, \cite{Nguyen2017jour}, \cite{Tseng2008prep} these proximal operations depend on a Bregman divergence. With the exception of \cite{BolteSabachTeboulleVaisbourd2018jour}, both $f$ and $g$ are assumed to be convex.  Backtracking is discussed only in  \cite{BeckTeboulle2009jour}. 

Here are a few more details. In ISTA \cite[Sections 2-3]{BeckTeboulle2009jour}, the  objective function $F=f+g$ is defined on the whole space (no constrained minimization); in addition, the gradient of the smooth term $f$ is assumed to be globally Lipschitz continuous; it is shown that under these conditions the sequence converges non-asymptotically at a sublinear rate, namely, $O(1/k)$. For a predecessor of ISTA, see \cite[Theorem 3.4]{CombettesWajs2005jour}.   

In  \cite{Cohen1980jour} the setting is a real reflexive Banach space which is sometimes assumed to be a real Hilbert space; each iterative step (in \cite[Algorithm 2.1]{Cohen1980jour}) is based on a minimization of the sum of three terms: a non-smooth term, a certain convex functional (depending on the iteration), and a certain linear term; the convex functional should have a Lipschitz continuous gradient on the constraint set and this gradient should also be strongly monotone; moreover, all of the Lipschitz constants should be uniformly bounded from above and the strong monotonicity constants should be bounded away from 0; under these and additional assumptions, among them that the gradient of the smooth term is  Lipschitz continuous on a  constraint subset, the main theorem \cite[Theorem 2.1]{Cohen1980jour} shows that each weak cluster point solves the minimization problem, and that non-asymptotic convergence holds; under further assumptions a strong convergence result is established; no rate of convergence of any kind is established. There are other theorems and algorithms in \cite{Cohen1980jour}, but they are closely related to the ones mentioned above and the main differences between them and our paper still hold.

In the 2008 preprint \cite{Tseng2008prep}, the space $X$ is a real normed  space which is probably either finite-dimensional (there are many similarities to \cite{Tseng2010jour} where $X$ is finite-dimensional) or at least a reflexive Banach space, since otherwise it is not clear why the iterations in various places, for instance in  \cite[Equations (8),(12),(17),(28),(31),(45),(47),(48)]{Tseng2008prep}, are well defined. The paper \cite{Tseng2008prep} is the only relevant paper of which we are aware, where there are subsets (closed and convex) $S_k$ on which the iterations depend, but they are not almost arbitrary as in our method (see Algorithm \bref{alg:LipschitzStep} and Algorithm \bref{alg:BacktrackingStep}): rather, they are either the whole space or they have a certain complicated form (which seems to be inspired by \cite[p. 240]{Nemirovski2004jour}) in order to make sure that each of them contains an optimal solution; see \cite[Equations (16), (46)]{Tseng2008prep}; on the other hand, they are not assumed to satisfy $\cup_{k=1}^{\infty}S_k=C$ and  $S_k\subseteq S_{k+1}$ for all $k$  as in our case (Assumption \bref{assum:MonotoneStronglyConvex} below): actually, at least in \cite[p. 8, Algorithm 3]{Tseng2008prep}, they are required to satisfy $S_k\supseteq S_{k+1}$ for all $k$; in addition, \cite{Tseng2008prep} assumes that $f'$ is globally Lipschitz continuous (that is, on $C$, which is actually the whole space in \cite{Tseng2008prep}) and the Bregman function $b$ is assumed to be globally strongly convex with a strong convexity parameter which is equal to 1. 

In \cite{Bello-CruzNghia2016jour} the setting is a Hilbert space; the iterative schemes suggested there are based on a line search strategy; under certain assumptions, an $O(1/k)$ non-asymptotic rate of convergence is established, and under further assumptions (finite-dimensionality), an $o(1/k)$ non-asymptotic rate of convergence is proven.   

In \cite{Nguyen2017jour} the setting is a real reflexive Banach space and the iterative step is based on a Bregman divergence which is induced by a Legendre function (which should satisfy additional conditions), on a sequence of other Legendre functions and on sequences of positive parameters; a key assumption imposed there is a certain ``descent condition'': in the notation of our paper (which is different from  \cite{Nguyen2017jour}), it means that if $B_b$ denotes the Bregman divergence induced by $b$ and $B_f$ denotes the Bregman divergence induced by the smooth term $f$, then it is assumed that there exists some $\beta>0$ such that $B_b(x,y)\geq \beta B_f(x,y)$ for all $x\in \dom(b)$ and $y\in \Int(\dom(b))$; under this condition and additional ones, a weak convergence result is established, and under further (and stronger) assumptions,  a strong convergence result is also established; no rate of convergence (asymptotic or non-asymptotic) is established. 

In \cite{BauschkeBolteTeboulle2017jour} the space is a finite-dimensional Euclidean space and the iterative step is based on a Bregman divergence and on a sequence of positive parameters;  the Bregman function is well chosen: for example, it should be Legendre and satisfy a descent condition similar to \cite{Nguyen2017jour} (called ``descent lemma'' in \cite[Condition (LC) and Lemma 1]{BauschkeBolteTeboulle2017jour}; it seems, however, that this ``descent condition'' has independently been obtained  in both works at around the same time); under these and additional conditions, an $O(1/k)$ non-asymptotic rate of convergence is established; under  additional assumptions the convergence of the iterative scheme to a solution of the problem is also  established.

The recent paper \cite{BolteSabachTeboulleVaisbourd2018jour} (see  also some of the references therein) does not assume that $f'$ is globally Lipschitz continuous and does not assume that $f$ or $g$ are convex;   since \cite{BolteSabachTeboulleVaisbourd2018jour} is devoted to nonconvex optimization, we do not further elaborate on it, with the exception of saying that our method, the setting that we consider here, and our results are significantly different from the method, setting and results of \cite{BolteSabachTeboulleVaisbourd2018jour}, even when in \cite{BolteSabachTeboulleVaisbourd2018jour} one restricts one's attention to the convex case. 

Finally, we note that despite the differences in the method, settings and assumptions between our paper and other papers, there are some similarities. For instance, the proof of our main convergence theorem (Theorem \bref{thm:BISTA-Convergence} below) is partly inspired by \cite[Section 3]{BeckTeboulle2009jour} and also has some similarities to \cite[Proofs of Theorem 1 and 2]{BolteSabachTeboulleVaisbourd2018jour} (nevertheless,  there are still differences between our proofs and other proofs, for instance because we use the limiting difference property of the Bregman divergence, while in the above-mentioned works this is not done).

\section{A Model Problem}\label{sec:ModelProblem}
In this short section, we illustrate the optimization problem that we intend to solve by using a useful example involving linear inverse problems. Let $n\in\N$ and $p\in [2,\infty[$ be given. Assume that $0\neq A:\R^n\to\R^m$ is a given linear operator and $c\in \R^m$ is a given vector, where $m\in\N$ is given. Denote $\|y\|_p:=\left(\sum_{i=1}^m|y_i|^p\right)^{1/p}$ for all $y=(y_i)_{i=1}^m\in \R^m$. Fix some $\lambda>0$ (a regularization parameter) and denote $f(x):=\frac{1}{p}\|Ax-c\|_p^p$, $g(x):=\lambda\|x\|_1$ and $F:=f+g$ for each $x\in C:=\R^n$. The minimization problem is to estimate  
\begin{equation}\label{eq:ell_p-ell_1}
\inf\left\{\frac{1}{p}\|Ax-c\|_p^p+\lambda\|x\|_1:\,\,x\in \R^n\right\}.
\end{equation}
In the particular case where $p=2$, this is the familiar $\ell_2-\ell_1$ minimization problem which is popular in machine learning, compressed sensing, and signal/image processing \cite{BachJenattonMairalObozinski2012jour}, \cite{BeckTeboulle2009jour}, \cite{ParikhBoyd2014jour}, but the choice $p=2$ is somewhat arbitrary and it seems that it is driven mostly by convenience and some a priori statistical assumptions on the data which often do not hold. Perhaps another reason for the popularity of the choice $p=2$ is because in this case $f'$ is globally Lipschitz continuous, while this is no longer true when $p>2$.  Nevertheless, we show in Example \bref{ex:ell_p-ell_1} below, in a rather  detailed manner, how one can  apply TEPROG in order to solve \beqref{eq:ell_p-ell_1} even when $p>2$. TEPROG and a similar analysis can be used in closely related scenarios, for instance when the smooth term in \beqref{eq:ell_p-ell_1} is a different proximity function, such as $KL(c,Ax)$ or $KL(Ax,c)$ (under some non-negativity assumptions on $c$, $x$ and $A$), where $KL(w,z):=\sum_{j=1}^m \left[w_j\log(w_j/z_j)-w_j+z_j\right]$, $w\in [0,\infty[^m$, $z\in ]0,\infty[^m$ is the Kullback-Leibler divergence (see \cite[Subsection 5.1]{BauschkeBolteTeboulle2017jour} and the references therein for a related discussion).

\section{Notation and Definitions}\label{sec:Preliminaries}
This section introduces a few basic definitions, which are used later in the paper. 

\subsection{Basic Notation and Assumptions:} Unless otherwise stated, we  consider a real normed space $(X,\|\cdot\|)$, $X\neq\{0\}$, which, frequently, will be explicitly  assumed to be a real reflexive Banach space. Along the paper we use well-known and standard notions and notations from convex analysis, such as the subdifferential of a function from $X$ to $]-\infty,\infty]$, the effective domain of such a function, the Fr\'echet and G\^ateaux derivatives of the function (whenever they exist) and so on. See, for instance, \cite{BauschkeCombettes2017book}, \cite{Rockafellar1970book}, \cite{VanTiel1984book}, \cite{Zalinescu2002book} for some relevant sources. In particular, we let $\langle x^*,x\rangle:=x^*(x)$ for each $x^*$ in the dual $X^*$ of $X$ and each $x\in X$. We denote by $]s,t[$, $[s,t[$, $]s,t]$ and $[s,t]$ the open, right-open, left-open and closed interval, respectively, having $s$  as its left-endpoint and $t$ as its right-endpoint, where  $-\infty\leq s\leq t\leq\infty$.  

A function $h:S\subseteq X\to X^*$  is said to be weak-to-weak$^*$ sequentially continuous at $x\in S$ if for each sequence $(x_i)_{i=1}^{\infty}$ in $S$ which converges weakly to $x$ and for each $w\in X$, we have $\lim_{i\to\infty}\langle h(x_i),w\rangle=\langle h(x),w\rangle$. If $h$ is weak-to-weak$^*$ sequentially continuous at each $x\in S$, then $h$ is said to be weak-to-weak$^*$ sequentially continuous on $S$. Of course, when $X$ is reflexive, then saying that $h$ is weak-to-weak$^*$ sequential continuous is the same as  saying that it is weak-to-weak sequential continuous, or, briefly, that it is weakly sequentially continuous. We say that $h$ is coercive on $S$ if $\lim_{\|x\|\to\infty, x\in S}h(x)=\infty$ and supercoercive on $S$ if  $\lim_{\|x\|\to\infty, x\in S}h(x)/\|x\|=\infty$. We say that $b:X\to ]-\infty,\infty]$ with $U:=\Int(\dom(b))$ (namely, $U$ is the interior of the effective domain of $b$) is essentially smooth on $X$, or on $U$, if $U\neq\emptyset$ and $b$ is proper, convex and G\^ateaux differentiable on $U$,  and also $\lim_{i\to\infty}\|b'(x_i)\|=\infty$ for every sequence $(x_i)_{i=1}^{\infty}$ in $U$ which converges to a boundary point of $U$, where $b'$ is the gradient of $b$.

\subsection{Relative Uniform Convexity}
The next definition presents the central concepts of uniform convexity, relative uniform convexity and (relative) strong convexity. 

\begin{defin}\label{def:TypesOfConvexity}
Assume that $b:X\to]-\infty,\infty]$ is convex and proper. Suppose that $S_1$ and $S_2$ are two nonempty subsets (not necessarily convex) of $\dom(b)$.  
\begin{enumerate}[(I)] 
\item\label{def:RelativeUniformlyConvex} The function $b$ is called uniformly convex relative to $(S_1,S_2)$ (or relatively uniformly convex on  $(S_1,S_2)$) if there exists $\psi:[0,\infty[\to [0,\infty]$, called a relative gauge, such that $\psi(t)\in ]0,\infty]$ whenever $t>0$ and for each $\lambda\in ]0,1[$ and each $(x,y)\in S_1\times S_2$, 
\begin{equation}\label{eq:RelativeUniformConvexity}
b(\lambda x+(1-\lambda)y)+\lambda(1-\lambda)\psi(\|x-y\|)\leq \lambda b(x)+(1-\lambda) b(y).
\end{equation}
If $S:=S_1=S_2$ and $b$ is uniformly convex relative to $(S_1,S_2)$, then $b$ is said to be  uniformly convex on $S$. 
\item\label{def:ModulusConvexity} The optimal gauge of $b$ relative to $(S_1,S_2)$  is the function defined for each $t\in [0,\infty[$ by:
\begin{multline}\label{eq:Relative_psi}
\psi_{b,S_1,S_2}(t):=\inf\left\{ \frac{\lambda b(x)+(1-\lambda)b(y)-b(\lambda x+(1-\lambda)y)}{\lambda(1-\lambda)}: (x,y)\in S_1\times S_2, 
\right.\\\left.
\|x-y\|=t,\lambda\in]0,1[
\vphantom{\frac{\lambda b(x)+(1-\lambda)b(y)-b(\lambda x+(1-\lambda)y)}{\lambda(1-\lambda)}}
\right\},
\end{multline} 
where we use the standard convention that $\inf \emptyset:=\infty$, namely, if there does not exist $(x,y)\in S_1\times S_2$ such that $\|x-y\|=t$, then $\psi_{b,S_1,S_2}(t):=\infty$. 
 The optimal gauge is also called the modulus of relative uniform convexity of $b$ on $(S_1,S_2)$ or simply the optimal relative gauge, and $b$ is uniformly convex on $(S_1,S_2)$ if and only if $\psi_{b,S_1,S_2}(t)>0$ for every $t\in ]0,\infty[$. If $S:=S_1=S_2$, then we denote $\psi_{b,S}:=\psi_{b,S_1,S_2}$ and call $\psi_{b,S}$ the modulus of uniform convexity of $b$ on $S$.
\item The function $b$ is said to be uniformly convex on closed, convex and bounded subsets of $\dom(b)$ if $b$ is  uniformly convex on each nonempty subset $S\subseteq \dom(b)$ which is closed, convex and bounded.  
\item\label{def:StronglyConvexGlobal} The function $b$ is said to be strongly convex relative to $(S_1,S_2)$ if there exists $\mu>0$ (which depends on $S_1$ and $S_2$), called a parameter of strong convexity  of $b$ on $(S_1,S_2)$, such that $b$ is uniformly convex relative to $(S_1,S_2)$ with $\psi(t):=\frac{1}{2}\mu t^2$, $t\in [0,\infty[$, as a relative gauge. If $S:=S_1=S_2$ and $b$ is strongly convex relative to $(S_1,S_2)$, then $b$ is said to be strongly convex on $S$, namely 
for each $\lambda\in ]0,1[$ and each $x,y\in S$, we have 
\begin{equation}\label{eq:StrongConvexity}
b(\lambda x+(1-\lambda)y)\leq \lambda b(x)+(1-\lambda) b(y)-\frac{1}{2}\mu\lambda(1-\lambda)\|x-y\|^2.  
\end{equation}
\end{enumerate}
\end{defin}

\begin{remark}\label{rem:StronglyConvex}$\,$
The notion of relative uniform convexity was introduced and investigated in \cite{ReemReichDePierro2019jour(BregmanEntropy)}, where various examples, results and other relevant details can be found.  Of course, uniformly convex and strongly convex functions (not relatively uniformly convex) are well-known concepts: see, for instance,  \cite[pp. 63-66]{Nesterov2004book} and \cite[pp. 203--221]{Zalinescu2002book}.  
\end{remark}

\subsection{Bregman Divergences}\label{subsec:Bregman}
We now discuss (semi) Bregman functions and divergences.  
\begin{defin}\label{def:BregmanDiv}
Suppose that $b:X\to ]-\infty,\infty]$. Let $\emptyset\neq U\subseteq X$. 
\begin{enumerate} 
\item We say that $b$ is a {\bf semi-Bregman function}  with respect to $U$ ({\bf the zone of $b$}) if the following conditions  hold:
\begin{enumerate}[(i)]
\item\label{BregmanDef:U}  $U=\Int(\dom(b))$ (in particular, $\Int(\dom(b))\neq \emptyset$) and $b$ is G\^ateaux differentiable in $U$. 
\item\label{BregmanDef:ConvexLSC} $b$ is convex and lower semicontinuous on $X$ and strictly convex on $\dom(b)$.
\end{enumerate} 
\item We say that $B$ is the {\bf Bregman divergence} (or {\bf the Bregman distance}, or {\bf the Bregman measure of distance}) associated with $b$, if $B$ is defined by 
\begin{equation}\label{eq:BregmanDistance}
B(x,y):=\left\{\begin{array}{lll}
b(x)-b(y)-\langle b'(y),x-y\rangle, & \forall (x,y)\in \dom(b)\times \Int(\dom(b)),\\
\infty & \textnormal{otherwise}.
\end{array}\right.
\end{equation}
\item We say that $b$ (or $B$) has the {\bf limiting difference property} if for each $x\in \dom(b)$ and each weakly convergent sequence $(y_i)_{i=1}^{\infty}$ in $U$, if the weak limit of $(y_i)_{i=1}^{\infty}$ is some $y\in U$, then $B(x,y)=\lim_{i\to\infty}(B(x,y_i)-B(y,y_i))$. 
\item We say that $B$ has bounded level-sets of the first type if for each $\gamma\in [0,\infty[$ and  each $x\in \dom(b)$, the level-set $L_1(x,\gamma):=\{y\in U: B(x,y)\leq \gamma\}$ is  bounded. 
\end{enumerate}
\end{defin}

Here are a few comments regarding Definition \bref{def:BregmanDiv}.
\begin{remark}\label{rem:BregmanDivergence}
\begin{enumerate}[(i)]
\item The Bregman divergence is, of course, a classical notion. It was  introduced  by Bregman \cite{Bregman1967jour}  in 1967 and since then it has found applications in various fields, among them optimization, nonlinear analysis, inverse problems, machine learning, and computational geometry. This divergence is not a true metric (for example, because it does not satisfy the triangle inequality), but it still enjoys various properties which make it a useful substitute for a distance (for example, $B(x,y)\geq 0$ for all $x\in \dom(b)$ and $y\in \Int(\dom(b))$ and $B(x,y)=0$ if and only if $x=y\in U$: see, for instance, \cite[Proposition 4.13(III)]{ReemReichDePierro2019jour(BregmanEntropy)}). Many more details about this notion, including historical details, a long list of relevant references, various mathematical properties, and a thorough re-examination,  can be found in  the recent paper \cite{ReemReichDePierro2019jour(BregmanEntropy)}. 

\item A semi-Bregman function generalizes the notion of ``a Bregman function'' (the term ``semi-Bregman function'' seems to be new, although it has been used here and there without this explicit name). In finite-dimensional Euclidean spaces, Bregman functions should satisfy more conditions in addition to the ones mentioned in Definition \bref{def:BregmanDiv}. See, for example, \cite[Definition 2.1]{CensorLent1981jour}, \cite[Definition 2.1]{CensorReich1996jour},  \cite[Definition 2.1]{DePierroIusem1986jour} (in this finite-dimensional setting, the limiting difference property is just a consequence of the classical definition and it is not mentioned explicitly).

\item\label{item:LimitingDifferenceProperty} A sufficient condition which ensures that $b$ has the limiting difference property is that $b'$ is weak-to-weak$^*$ sequentially continuous: see \cite[Proposition 4.13(XIX)]{ReemReichDePierro2019jour(BregmanEntropy)}. Of course, if $X$ is finite-dimensional, then $b'$ is continuous (this is a consequence of \cite[Corollary 25.5.1, p. 246]{Rockafellar1970book}) and hence  weak-to-weak$^*$ sequentially continuous; see \cite[Proposition 5.6 and Remark 5.7]{ReemReichDePierro2019jour(BregmanEntropy)} for sufficient conditions which ensure that $b'$ is weak-to-weak$^*$ sequentially continuous in infinite-dimensional settings. 

\item\label{item:LevelSetBounded} An immediate sufficient condition which ensures that all the first type level-sets of $B$ will be bounded is that $U$ is bounded. A less immediate such sufficient condition is that  for each $x\in\dom(b)$, there exists $r_x\geq 0$  such that the subset $\{w\in U: \|w\|\geq r_x\}$ is nonempty and $b$ is uniformly convex relative to $(\{x\},\{w\in U: \|w\|\geq r_x\})$ with a gauge $\psi_x$ which  satisfies $\lim_{t\to\infty}\psi_x(t)=\infty$: see \cite[Proposition 4.13(XV)]{ReemReichDePierro2019jour(BregmanEntropy)}. This latter condition holds, in particular, if $b$ is uniformly convex on $\dom(b)$, as follows from  \cite[Lemma 3.3]{ReemReichDePierro2019jour(BregmanEntropy)}.
\end{enumerate}
\end{remark}

\subsection{The Proximal Gradient Method}\label{subsec:PGM}
Here we briefly recall very well-known versions of the proximal gradient method (the proximal point algorithm). In its very basic form, this algorithm can be written as follows: 
\begin{equation}\label{eq:Prox}
x_{k}:=\argmin_{x\in C}(F(x)+c_k\|x-x_{k-1}\|^2), \quad\, k\geq 2,
\end{equation}
where  $F$ is the objective function to be minimized (it should satisfy certain assumptions, for example, to be proper, convex and lower semicontinuous), $C$ is the constraint subset over which the minimal value of $F$ is sought ($C$ is assumed to be a nonempty, closed and convex subset of the ambient space $X$),  $x_1\in X$ is some initial point, and $c_k$ is some positive parameter which may or may not depend on the iteration, for example, $c_k=0.5L$ for every $k\geq 2$, where $L$ is a fixed positive number. It is well known that $(x_k)_{k=1}^{\infty}$ is well defined (that is, the minimizer in \beqref{eq:Prox} exists and is unique)  and converges weakly, at least under certain assumptions. See, for example,   \cite[Theorem 3.4]{CombettesWajs2005jour} and \cite[p. 878 and Theorem 1 (p. 883)]{Rockafellar1976jour}. 

A further  generalization of \beqref{eq:Prox} is to replace the regularization term $\|x-x_k\|^2$ by a Bregman divergence $B(x,x_{k-1})$ and to replace the function $F$ by an approximation function $F_k$ so that the iterative scheme becomes 
\begin{equation}\label{eq:ProxBregman}
x_{k}:=\argmin_{x\in C}(F_k(x)+c_kB(x,x_{k-1})), \quad\, k\geq 2.
\end{equation}
One possible choice for $F_k$ is simply $F_k:=F$, as done in \cite{CensorZenios1992jour} (the paper which started the investigation of the proximal gradient method in the context of Bregman divergences). However, when $F=f+g$ and $f'$ exists, it is common to take $F_k(x):=f(x_{k-1})+\langle f'(x_{k-1}),x-x_{k-1}\rangle+g(x)$, $x\in C$, namely $F_k$ is the sum of $g$ with a linear term which approximates the smooth term $f$.  
A very partial list of references which consider, in various settings, the proximal gradient method with a Bregman divergence, is \cite{BeckTeboulle2003jour}, \cite{ButnariuIusemZalinescu2003jour}, \cite{ChenTeboulle1993jour}, \cite{OsherBurgerGoldfarbXuYin2005jour}, \cite{Tseng2008prep}, \cite{Tseng2010jour},  \cite{YinOsherGoldfarbDarbon2008jour}, \cite{Zaslavski2010jour}.

\section{TEPROG: a Telescopic Proximal Bregmanian Method}\label{sec:ISTA-Bregman}
In this section we present TEPROG, namely our new variants of the proximal gradient method with a Bregman divergence. We impose the following assumptions: The ambient space $(X,\|\cdot\|)$ is a real reflexive Banach space; $b:X\to]-\infty,\infty]$ is a semi-Bregman function (see Definition \bref{def:BregmanDiv}); in particular, its zone is $U:=\Int(\dom(b))\neq\emptyset$; we denote by $B$ the associated Bregman divergence of $b$ (defined in \beqref{eq:BregmanDistance}); we assume that $C\subseteq \dom(b)$, which is the constraint subset over  which we want to perform the minimization process, is a nonempty, closed and convex subset of $X$ which satisfies $C\cap U\neq\emptyset$; we are given a function $f:\dom(b)\to \R$ which is convex on $\dom(b)$ and G\^ateaux differentiable in $U$; we assume that the restriction of $f$ to $C$ is lower semicontinuous; we are also given a function $g:C\to ]-\infty,\infty]$ which is convex, proper and lower  semicontinuous.  Our goal is to solve the minimization problem 
\begin{equation}\label{eq:F-ISTA}
\inf\{F(x),\,\,x\in C\},
\end{equation}
where $F:X\to ]-\infty,\infty]$ is the function defined by 
\begin{equation}\label{eq:F=f+g}
F(x):=\left\{\begin{array}{lll}
f(x)+g(x),&\,x\in C,\\
\infty, & x\notin C.
\end{array}
\right.
\end{equation}
 We assume from now on that $C$ contains more than one point, since otherwise \beqref{eq:F-ISTA} is trivial. 
The assumptions on $f$, $g$ and $C$ imply that $F$ is convex, proper and lower semicontinuous. The values of $g$ outside $C$ and of $f$ outside $\dom(b)$ are not very important for us, but, as is common in optimization theory, we may assume that they are equal to $+\infty$. 

We define as follows a proximal function $p_{L,\mu,S}$ which depends on three parameters. 
The first two parameters are arbitrary $L>0$ and $\mu>0$. The third parameter is an arbitrary closed and convex subset $S\subseteq C$ which has the following properties: first, $b$ is strongly convex on $S$ with a strong convexity parameter $\mu>0$; second, $g$ is proper on $S$; third, $S\cap U\neq \emptyset$ (Assumption \bref{assum:MonotoneStronglyConvex} below ensures the existence of such a subset $S$). Given $y\in S\cap U$, let 
\begin{equation}\label{eq:p_L ISTA}
p_{L,\mu,S}(y):=\textnormal{argmin}\{Q_{L,\mu,S}(x,y): \quad x\in S\},
\end{equation}
where $Q_{L,\mu,S}(x,y)$ is defined as follows:
\begin{equation}\label{eq:Q_L-ISTA}
Q_{L,\mu,S}(x,y):=f(y)+\langle f'(y),x-y\rangle+\frac{L}{\mu}B(x,y)+g(x),\quad\forall x\in S, \forall  y\in  U.
\end{equation}
Our assumptions and Lemma \bref{lem:UniqueMinimzerQ} below ensure that the function $u_y(x):=Q_{L,\mu,S}(x,y)$, $x\in S$, has a unique minimizer in $S$. Thus $p_{L,\mu,S}(y)$ is well defined. We impose the following additional assumptions:
\begin{assumption}\label{assum:O(F)}
The set of minimizers of $F$, namely  
\begin{equation}
\textnormal{MIN}(F):=\Bigl\{x_{\textnormal{min}}\in C: F(x_{\textnormal{min}})=\inf\{F(x): x\in C\}\Bigr\}, 
\end{equation}
is  nonempty and contained in $U$.
\end{assumption}
\begin{assumption}\label{assum:MonotoneStronglyConvex}
There is a sequence of subsets $(S_k)_{k=1}^{\infty}$ in $C$, which, for the sake of convenience, we refer to as ``a telescopic sequence'', that has the following properties for each $k\in\N$: $S_k$ is closed  and convex; $S_k\cap U$ contains more than one point; $S_k\subseteq S_{k+1}\subseteq C$;  the function $b$ is strongly convex on each $S_k$ with a parameter $\mu_k$ such that  $\mu_{k+1}\leq \mu_k$; finally, $\cup_{k=1}^{\infty}S_k=C$. In addition, $g$ is proper on $S_1$ (and hence on each $S_k$, $k\in\N$). 
\end{assumption}
\begin{assumption}\label{assum:f'Lip}
 $f'$ is Lipschitz continuous on $S_k\cap U$ for each $k\in \N$. We denote the best (smallest) Lipschitz constant of $f'$ by $L(f',S_k\cap U):=\sup\{\|f'(x)-f'(y)\|/\|x-y\|: x\in S_k\cap U,\, y\in S_k\cap U,\, x\neq y\}$. 
\end{assumption}
\begin{assumption}\label{assum:pInU} 
For all $L>0$, $k\in \N$ and $y\in S_k\cap U$, we have $p_{L,\mu_k,S_k}(y)\in S_k\cap U$,  
where $\mu_k$ is the strong convexity parameter of $b$ over $S_k$. 
\end{assumption}
Assumptions \bref{assum:O(F)}--\bref{assum:pInU} occur frequently in applications. Indeed, the assumption that $\textnormal{MIN}(F)$ is nonempty (in Assumption \bref{assum:O(F)}) holds if, for example, $C$ is compact (in particular, when $X$ is finite-dimensional and $C$ is closed and bounded) or when both $g$  and the restriction of $f$ to $C$ are coercive,  since then $F$ is proper, coercive, convex, and lower semicontinuous on the closed and convex subset $C$ and hence, by a well-known and classical result \cite[Corollary 3.23, p. 71]{Brezis2011book}, it has a minimizer in $C$; the assumption that $\textnormal{MIN}(F)\subseteq U$ trivially holds if $C\subseteq U$, but it may hold in other cases as well (see, for instance, Examples \bref{ex:Simplex} and \bref{ex:Prism} below). Assumption \bref{assum:MonotoneStronglyConvex} holds frequently: for instance, if $b$ is strongly convex on bounded and convex subsets of $\dom(b)$, then we can take each $S_k$ to be the intersection of $C$ with a closed ball with center at some $y_0\in C\cap U$, where the radii of these balls are increasing, or, if $b$ is strongly convex on $C$, then we may take $S_k:=C$, $k\in\N$ (with the hope that the other assumptions will be satisfied with this choice).  Assumption \bref{assum:f'Lip} holds frequently, for instance if $f'$ is continuous on $U$ and $S_k$ is bounded for each $k$ and $f''$ exists and is bounded  on bounded subsets of $C\cap U$ (as follows from the generalized Mean Value Theorem \cite[Theorem 1.8, pp. 13, 23]{AmbrosettiProdi1993book}). Assumption \bref{assum:pInU} holds, for instance, when $C\subseteq U$ or (as follows from Lemma \bref{lem:zInSU} and Remark \bref{rem:SufficientConditions dom(g)dom(b)} below) when $X$ is finite-dimensional, $b$ is essentially smooth on $U$, and either $b$ is continuous on $\dom(b)$  or $g$ is continuous on $C$ or there exists a point $x\in S_1\cap U$ such that $g(x)\in\R$. Anyhow, Assumption \bref{assum:pInU} ensures  that $x_k$ defined below (in either \beqref{eq:x_k ISTA} or \beqref{eq:x_kBacktrack ISTA})  satisfies $x_k\in S_k\cap U$ for all $k\in \N$.

Our telescopic proximal gradient method is presented below.  We consider two versions of it:  one with a Lipschitz step size rule and the other with a backtracking step size rule. 
\begin{alg}\label{alg:LipschitzStep}$\,${\emph{\textbf{TEPROG with a Lipschitz step size rule:}}}\\
\noindent{\bf Input:} A positive number $L_1\geq L(f',S_1\cap U)$.\\ 
\noindent{\bf Step 1 (Initialization):} an arbitrary point $x_1\in S_1\cap U$.\\
\noindent{\bf Step $k$, $k\geq 2$: } $L_k$  is arbitrary such that $L_k\geq \max\{L_{k-1},L(f',S_k\cap U)\}$. Given $\mu_k>0$, which is a parameter of strong convexity of $b$ on $S_k$ (see Assumption \bref{assum:MonotoneStronglyConvex}), let  
\begin{equation}\label{eq:x_k ISTA}
x_{k}:=p_{L_k,\mu_k,S_k}(x_{k-1}).
\end{equation}
\end{alg}

\begin{alg}\label{alg:BacktrackingStep}$\,${\emph{\textbf{TEPROG with a backtracking  step size rule:}}}\\
\noindent{\bf Input: } $\eta>1$; if $S_k=C$ for all $k\in\N$, then another input is an arbitrary positive number $L_1$; otherwise, another input is any $L_1$ satisfying $0<L_1\leq \eta L(f',S_1\cap U)$.  \\
\noindent{\bf Step 1 (Initialization):} an arbitrary point $x_1\in S_1$.\\
\noindent{\bf Step $k$, $k\geq 2$: } Let $\mu_k>0$ be a parameter of strong convexity of $b$ on $S_k$ (the existence of $\mu_k$ is ensured by Assumption \bref{assum:MonotoneStronglyConvex}). Find the smallest nonnegative integer $i_k$ such that with $L_k:=\eta^{i_k}L_{k-1}$, we have 
\begin{equation}\label{eq:L_k_ISTA}
F(p_{L_k,\mu_k,S_k}(x_{k-1}))\leq Q_{L_k,\mu_k,S_k}(p_{L_k,\mu_k,S_k}(x_{k-1}),x_{k-1}).
\end{equation}
 Now let 
\begin{equation}\label{eq:x_kBacktrack ISTA}
x_k:=p_{L_k,\mu_k,S_k}(x_{k-1}). 
\end{equation}
\end{alg}

\begin{remark}\label{rem:L_kFiniteFx_k ISTA}
The backtracking step size rule is well defined in the sense that \beqref{eq:L_k_ISTA} does occur for some $i_k$. Indeed, given $2\leq k\in\N$, we first observe that \cite[Proposition 4.13(I)]{ReemReichDePierro2019jour(BregmanEntropy)}, when applied to $\psi(t):=0.5\mu_k t^2$, $t\in [0,\infty[$, $y:=x_{k-1}\in S_{k-1}\cap U\subseteq S_k\cap U$ and $x:=p_{L,\mu_k,S_k}(x_{k-1})\in S_k\cap U$, implies that $0.5\mu_k\|p_{L,\mu_k,S_k}(x_{k-1})-x_{k-1}\|^2\leq B(p_{L,\mu_k,S_k}(x_{k-1}),x_{k-1})$. By combining this inequality  with Lemma \bref{lem:LipschitzUpperBound} below (in which we take any $L\geq L(f',S_k\cap U)$) and with \beqref{eq:Q_L-ISTA}, we see that 
\begin{multline}\label{eq:FQ}
F(p_{L,\mu_k,S_k}(x_{k-1}))=f(p_{L,\mu_k,S_k}(x_{k-1}))+g(p_{L,\mu_k,S_k}(x_{k-1}))\leq \\
g(p_{L,\mu_k,S_k}(x_{k-1}))+f(x_{k-1})+\langle f'(x_{k-1}),p_{L,\mu_k,S_k}(x_{k-1})-x_{k-1}\rangle
+\frac{1}{2}L\|p_{L,\mu_k,S_k}(x_{k-1})-x_{k-1}\|^2\\
\leq g(p_{L,\mu_k,S_k}(x_{k-1}))+f(x_{k-1})+\langle f'(x_{k-1}),p_{L,\mu_k,S_k}(x_{k-1})-x_{k-1}\rangle
+\frac{L}{\mu_k} B(p_{L,\mu_k,S_k}(x_{k-1}),x_{k-1})\\
=Q_{L,\mu_k,S_k}(p_{L,\mu_k,S_k}(x_{k-1}),x_{k-1}).
\end{multline}
Since, by taking $i_k$ large enough, we can ensure that  $L_k\geq L(f',S_k\cap U)$, if we let $L:=L_k$, then \beqref{eq:FQ} implies that \beqref{eq:L_k_ISTA} holds. It may happen, however, that \beqref{eq:L_k_ISTA} holds even when $L_k<L(f',S_k\cap U)$. In the Lipschitz step size rule $L_k\geq L(f',S_k\cap U)$ for all $k\geq 2$ by the definition of $L_k$, and hence  \beqref{eq:L_k_ISTA} holds in this case too, as a result of \beqref{eq:FQ}. Finally, since  $Q_{L_k,\mu_k,S_k}(p_{L_k,\mu_k,S_k}(x_{k-1}),x_{k-1})\in \R$ according to Lemma  \bref{lem:UniqueMinimzerQ} below, we can conclude from \beqref{eq:FQ} that $F(x_{k})\in \R$ for all  $k\geq 2$. 
\end{remark}

\begin{remark}\label{rem:BoundLk ISTA}
 The constructions of $L_k$ in both the Lipschitz and the backtracking step size rules imply immediately that $(L_k)_{k=1}^{\infty}$ is increasing. Actually, by using simple induction it can be shown that in the backtracking step size rule, if  $S_j\neq C$ for some $j$, then $L_k\leq \eta L(f',S_k\cap U)$ for each $k\in\N$: see the appendix (Section \bref{sec:Appendix}) for the details. 
 
The above discussion implies that we can construct an increasing sequence $(\tau_k)_{k=1}^{\infty}$ of positive numbers which always satisfies 
\begin{equation}\label{eq:tau_rho ISTA}
L_k\leq \tau_k,\quad \forall\,k\in \N,
\end{equation}
and sometimes also satisfies 
\begin{equation}\label{eq:tau_eta}
\tau_k \leq \eta L(f',S_k\cap U),\quad \forall\,k\in \N.
\end{equation}
If we are interested only in \beqref{eq:tau_rho ISTA}, as in the Lipschitz step size rule, then, of course,  we can simply take $\tau_k:=L_k$ for all $k\in\N$, but we are free to select the $\tau_k$ parameters in any other way which guarantees \beqref{eq:tau_rho ISTA}. Suppose that we are in the backtracking step size rule and we are interested in both \beqref{eq:tau_rho ISTA} and \beqref{eq:tau_eta}. If $S_j\neq C$ for some $j\in\N$, then  $L_1\leq \eta L(f',S_1\cap U)$ and from the above discussion we can  take $\tau_k:=\eta L(f',S_k\cap U)$ for all $k\in \N$; this choice ensures that both \beqref{eq:tau_rho ISTA} and \beqref{eq:tau_eta} hold.  Otherwise $S_k=C$ for all $k\in \N$. In order to make sure that both \beqref{eq:tau_rho ISTA} and \beqref{eq:tau_eta} hold, the previous discussion implies that  we can take an arbitrary $L_1\leq \eta L(f',S_1\cap U)$ and define $\tau_k:=\eta L(f',S_k\cap U)$ for all $k\in \N$. If we are in the backtracking step size rule but select some $L_1>\eta L(f',S_1\cap U)$, then a simple analysis shows that actually $L_{k+1}=L_k$ for each $k\in\N$ and hence we can let $\tau_k:=L_k$ for each $k\in\N$: see Section \bref{sec:Appendix} for the simple details. 
\end{remark}

\section{The Convergence Theorem and Several Corollaries}\label{sec:Convergence}
In this section we present Theorem \bref{thm:BISTA-Convergence} below. It asserts that under some  assumptions, the iterative scheme produced by TEPROG (Algorithm \bref{alg:LipschitzStep} or Algorithm \bref{alg:BacktrackingStep}) converges at a certain non-asymptotic rate of convergence, and under further assumptions, also converges weakly. A few corollaries are presented after the theorem itself. 

\begin{thm}\label{thm:BISTA-Convergence}
In the framework of Section \bref{sec:ISTA-Bregman}, for each minimizer  $x_{\textnormal{min}}\in \textnormal{MIN}(F)$, there exists $k_0\in\N$ (which satisfies $k_0\geq 2$  if $F(x_1)=\infty$) such that for each  $k\geq k_0$, we have   
\begin{equation}\label{eq:O(1/k)}
F(x_{k+1})-F(x_{\textnormal{min}})\leq \frac{\tau_{k+1} B(x_{\textnormal{min}},x_{k_0})}{(k+1-k_0)\mu_{k+1}}.
\end{equation}
In addition, if 
\begin{equation}\label{eq:tau_mu_k}
\lim_{k\to\infty}\frac{\tau_{k}}{k\mu_k}=0, 
\end{equation}
then $(x_k)_{k=1}^{\infty}$ converges non-asymptotically to the minimal value of $F$. Moreover, if \beqref{eq:tau_mu_k} holds, $B$ has the limiting difference property and all of its first type level-sets are bounded, then there exists $z_{\infty}\in \textnormal{MIN}(F)$ such that $z_{\infty}=\lim_{k\to\infty}x_k$ weakly. In particular, if \beqref{eq:tau_mu_k} holds, $b'$ is weak-to-weak$^*$ sequentially continuous on $U$, and either $U$ is bounded or for each $x\in C$, there exists $r_x\geq 0$  such that $\{y\in U: \|y\|\geq r_x\}\neq\emptyset$ and $b$ is  uniformly convex relative to $(\{x\},\{y\in U: \|y\|\geq r_x\})$ with a gauge $\psi_x$ satisfying $\lim_{t\to\infty}\psi_x(t)=\infty$ (this latter condition holds, in particular, if $b$ is uniformly convex on $U$), then $(x_k)_{k=1}^{\infty}$ converges weakly to some point in $\textnormal{MIN}(F)$. 
\end{thm}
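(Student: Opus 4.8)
The plan is to follow the three layers of the statement, using a Beck--Teboulle-type one-step inequality as the engine. Fix $x_{\textnormal{min}}\in\textnormal{MIN}(F)$. Since $\textnormal{MIN}(F)\subseteq U$ (Assumption \bref{assum:O(F)}) and $x_{\textnormal{min}}\in C=\bigcup_k S_k$ with the $S_k$ increasing, there is a smallest index $k_0$ with $x_{\textnormal{min}}\in S_{k_0}\cap U$, which we enlarge to be $\geq 2$ if $F(x_1)=\infty$; then $x_{\textnormal{min}}\in S_k\cap U$ for all $k\geq k_0$. For $k\geq 2$ we have $x_{k-1}\in S_{k-1}\cap U\subseteq S_k\cap U$, by Assumption \bref{assum:pInU} also $x_k=p_{L_k,\mu_k,S_k}(x_{k-1})\in S_k\cap U$, and the inequality $F(x_k)\leq Q_{L_k,\mu_k,S_k}(x_k,x_{k-1})$ holds in both step size rules (by construction in the backtracking rule, and by \beqref{eq:FQ} in the Lipschitz rule). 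Applying Lemma \bref{lem:FB_k} with $y:=x_{k-1}$, $x:=x_k$, $L:=L_k$, $\mu:=\mu_k$ and $S:=S_k$ yields, for every $z\in S_k$,
\[
\frac{\mu_k}{L_k}\bigl(F(x_k)-F(z)\bigr)\leq B(z,x_{k-1})-B(z,x_k),\qquad k\geq 2 .
\]
Choosing $z:=x_{k-1}$ and using $B\geq 0$ gives the monotonicity $F(x_k)\leq F(x_{k-1})$ for all $k\geq 2$.

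For the non-asymptotic rate, fix $k\geq k_0$ and put $n:=k+1$. For each $j$ with $k_0+1\leq j\leq n$, apply the above inequality with $z:=x_{\textnormal{min}}\in S_j\cap U$; since $(\mu_i)$ is non-increasing and $(L_i)$ non-decreasing with $L_i\leq\tau_i\leq\tau_n$, we have $\mu_j/L_j\geq\mu_n/\tau_n$, and since $0\leq F(x_n)-F(x_{\textnormal{min}})\leq F(x_j)-F(x_{\textnormal{min}})$ by the monotonicity, we obtain
\[
\frac{\mu_n}{\tau_n}\bigl(F(x_n)-F(x_{\textnormal{min}})\bigr)\leq B(x_{\textnormal{min}},x_{j-1})-B(x_{\textnormal{min}},x_j).
\]
Summing over $j=k_0+1,\dots,n$ the right-hand side telescopes to $B(x_{\textnormal{min}},x_{k_0})-B(x_{\textnormal{min}},x_n)\leq B(x_{\textnormal{min}},x_{k_0})$, a finite number because $x_{\textnormal{min}}\in\dom(b)$ and $x_{k_0}\in U$; dividing by $(n-k_0)\mu_n/\tau_n$ produces \beqref{eq:O(1/k)}. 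If in addition \beqref{eq:tau_mu_k} holds, then $\frac{\tau_{k+1}}{(k+1-k_0)\mu_{k+1}}=\frac{\tau_{k+1}}{(k+1)\mu_{k+1}}\cdot\frac{k+1}{k+1-k_0}\to 0$, so the right-hand side of \beqref{eq:O(1/k)} tends to $0$; together with $F(x_k)\geq F(x_{\textnormal{min}})$ this is the asserted non-asymptotic convergence of $F(x_k)$ to $\inf_{C}F$.

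For the weak convergence, assume \beqref{eq:tau_mu_k}, that $B$ has the limiting difference property, and that all first type level-sets of $B$ are bounded. For any $z\in\textnormal{MIN}(F)$, the one-step inequality with this $z$ together with $F(x_k)\geq F(z)$ shows that $\bigl(B(z,x_k)\bigr)_{k\geq k_0(z)}$ is non-increasing, hence convergent; with $z=x_{\textnormal{min}}$ this also puts $x_k$ (for $k\geq k_0$) into the bounded level-set $\{y\in U:B(x_{\textnormal{min}},y)\leq B(x_{\textnormal{min}},x_{k_0})\}$, so $(x_k)$ is bounded. By reflexivity it has weak cluster points; if $x_{k_j}\rightharpoonup\bar x$, then $\bar x\in C$ (which is weakly closed) and, by weak lower semicontinuity of the proper convex lsc function $F$, $F(\bar x)\leq\liminf_j F(x_{k_j})=\lim_k F(x_k)=\inf_{C}F$, whence $\bar x\in\textnormal{MIN}(F)\subseteq U$. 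To see uniqueness of the cluster point, take $x_{k_j}\rightharpoonup\bar x$ and $x_{l_i}\rightharpoonup\hat x$ with $\bar x,\hat x\in\textnormal{MIN}(F)\subseteq U$. The limiting difference property applied along $(x_{k_j})$ with the fixed point $\hat x\in\dom(b)$, and along $(x_{l_i})$ with the fixed point $\bar x\in\dom(b)$, together with the convergence of $\bigl(B(\bar x,x_k)\bigr)_k$ and $\bigl(B(\hat x,x_k)\bigr)_k$ (so subsequential limits are the full limits), gives $B(\hat x,\bar x)=\lim_k\bigl(B(\hat x,x_k)-B(\bar x,x_k)\bigr)$ and $B(\bar x,\hat x)=\lim_k\bigl(B(\bar x,x_k)-B(\hat x,x_k)\bigr)$; adding these yields $B(\hat x,\bar x)+B(\bar x,\hat x)=0$, so $B(\bar x,\hat x)=0$ and hence $\bar x=\hat x$ by the characterization of the zeros of $B$. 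Thus all weak cluster points equal one point $z_{\infty}\in\textnormal{MIN}(F)$, and boundedness plus reflexivity give $x_k\rightharpoonup z_{\infty}$. The final ``in particular'' clause then follows, since weak-to-weak$^*$ sequential continuity of $b'$ on $U$ implies the limiting difference property and boundedness of $U$ (or the stated relative-uniform-convexity condition) implies boundedness of the first type level-sets of $B$; see Remark \bref{rem:BregmanDivergence}.

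The conceptual core, and the main obstacle, is the one-step inequality: establishing the correct Bregmanian analogue of \cite[Lemma 2.3]{BeckTeboulle2009jour} (our Lemma \bref{lem:FB_k}) and verifying that \emph{all} the membership conditions ($x_{k-1},x_k\in S_k\cap U$, and $x_{\textnormal{min}}\in S_k\cap U$ for $k\geq k_0$) as well as the inequality $F(x_k)\leq Q_{L_k,\mu_k,S_k}(x_k,x_{k-1})$ hold under both step size rules. Once that is in place, the rate is a bookkeeping telescoping that must carefully track the monotone parameter sequences $\mu_k$, $L_k$, $\tau_k$ and the shift $k_0$. In the weak-convergence part the subtlety is that one is given only the limiting difference property, not weak-to-weak$^*$ continuity of $b'$, so the Opial-type uniqueness argument must be run directly through that property; this is exactly why it matters that the weak cluster points lie in $U$, which is guaranteed by $\textnormal{MIN}(F)\subseteq U$.
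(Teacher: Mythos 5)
Your proposal is correct, and its skeleton coincides with the paper's: the same index $k_0$, the same one-step inequality from Lemma \bref{lem:FB_k} (with the membership conditions $x_{k-1},x_k\in S_k\cap U$ and $x_{\textnormal{min}}\in S_k$ for $k\geq k_0$ checked exactly as in the paper), the same telescoping of $B(x_{\textnormal{min}},\cdot)$, and the same use of the limiting difference property to force all weak cluster points to coincide. There are two genuine, if modest, deviations. First, for the rate the paper follows the Beck--Teboulle pattern: it sums the inequality with $x:=x_{\textnormal{min}}$ and separately a second family of inequalities ($x:=x_i$, $y:=x_i$) weighted by $i-k_0$, and adds the two sums; you instead prove $F(x_k)\leq F(x_{k-1})$ once and use it to bound every summand $\frac{\mu_j}{L_j}\bigl(F(x_j)-F(x_{\textnormal{min}})\bigr)$ from below by $\frac{\mu_{k+1}}{\tau_{k+1}}\bigl(F(x_{k+1})-F(x_{\textnormal{min}})\bigr)$ before telescoping. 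This is a cleaner piece of bookkeeping that yields the identical bound \beqref{eq:O(1/k)}; a side effect is that the adjustment $k_0\geq 2$ when $F(x_1)=\infty$ (which the paper needs to avoid a $0\cdot\infty$ term in its weighted sum) becomes unnecessary in your version, though keeping it is of course harmless and matches the statement. Second, for uniqueness of the weak cluster point you run the symmetric limiting-difference computation $B(\hat x,\bar x)+B(\bar x,\hat x)=0$ inline, which is precisely the content and proof of the paper's Lemma \bref{lem:BregmanLimit}; the paper simply cites that lemma. Neither change affects correctness, and the verification of the ``in particular'' clause via Remark \bref{rem:BregmanDivergence} is the same in both.
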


\begin{cor}\label{cor:AlmostO(1/k)}
Consider the framework of Section \bref{sec:ISTA-Bregman}, and  assume further that $f''$ exists, is bounded  and uniformly continuous on bounded subsets of $C\cap U$, and that $b$ is strongly convex on $C$ with a strong convexity parameter $\mu>0$. Then we can construct a sequence $(x_k)_{k=1}^{\infty}$, by either Algorithm \bref{alg:LipschitzStep} or Algorithm \bref{alg:BacktrackingStep}, which converges in the function values to a solution of \beqref{eq:F-ISTA}, at a rate of convergence which can be arbitrarily close to $O(1/k)$. In particular, for all $x_{\textnormal{min}}\in \textnormal{MIN}(F)$, $q\in ]0,1[$ , $y_0\in C\cap U$ and $\alpha>\|f''(y_0)\|$, there are a sequence $(x_k)_{k=1}^{\infty}$ and an index $k_0\in \N$ such that 
\begin{equation}\label{eq:k^q}
F(x_{k+1})-F(x_{\textnormal{min}})\leq \frac{k+1}{k+1-k_0}\cdot\frac{\alpha B(x_{\textnormal{min}},x_{k_0})}{\mu(k+1)^{1-q}},\quad \forall k\geq k_0.
\end{equation}
Moreover, if for each $x\in C$, there exists $r_x\geq 0$  such that $\{y\in U: \|y\|\geq r_x\}\neq\emptyset$ and $b$ is  uniformly convex relative to $(\{x\},\{y\in U: \|y\|\geq r_x\})$ with a gauge $\psi_x$ satisfying $\lim_{t\to\infty}\psi_x(t)=\infty$ (a condition which holds, in particular, when $b$ is uniformly convex on $U$), and if  $b'$ is weak-to-weak$^*$ sequentially continuous on $U$, then $(x_k)_{k=1}^{\infty}$  converges weakly to some point in $\textnormal{MIN}(F)$. 
\end{cor}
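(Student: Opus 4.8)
The plan is to derive Corollary \ref{cor:AlmostO(1/k)} as a special case of Theorem \ref{thm:BISTA-Convergence}, so the work is to verify that the hypotheses of that theorem are satisfied and to choose the auxiliary parameters $(S_k)$, $(\tau_k)$, $(\mu_k)$ so that the resulting estimate \eqref{eq:O(1/k)} collapses to \eqref{eq:k^q}. First I would set up the telescopic sequence: fix $y_0 \in C\cap U$, and for each $k\in\N$ take $S_k := C \cap \overline{B}(y_0, R_k)$ for a suitable increasing sequence of radii $R_k \to \infty$ (chosen below); since $b$ is strongly convex on $C$ with parameter $\mu$, it is strongly convex on each $S_k$ with the \emph{same} parameter, so I set $\mu_k := \mu$ for all $k$, which trivially satisfies $\mu_{k+1} \le \mu_k$. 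The conditions that $S_k$ is closed, convex, nested with union $C$, that $S_k\cap U$ has more than one point (for $R_k$ large, using $C\cap U \neq\emptyset$ and that $C$ has more than one point — here one may need to shift to a point of $C\cap U$ and take $R_1$ large enough, or intersect with $C\cap U$-neighborhoods), and that $g$ is proper on $S_1$ (enlarge $R_1$ if necessary so $S_1$ meets $\dom(g)$) are all routine. For Assumption \ref{assum:f'Lip}: since $f''$ exists and is bounded on bounded subsets of $C\cap U$, the generalized Mean Value Theorem gives that $f'$ is Lipschitz on each bounded $S_k\cap U$, and moreover one gets the quantitative bound $L(f', S_k\cap U) \le \sup\{\|f''(x)\| : x\in S_k\cap U\}$. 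Assumption \ref{assum:pInU} still requires justification; the cleanest route is to invoke the same sufficient conditions cited after Assumption \ref{assum:pInU} (via Lemma \ref{lem:zInSU}), or, if $C\subseteq U$, it is automatic — I would state which sufficient condition I am using, since Corollary \ref{cor:AlmostO(1/k)} as phrased does not explicitly list one, so presumably one reads it under the blanket hypotheses of Section \ref{sec:ISTA-Bregman}, which include Assumption \ref{assum:pInU}.

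Next I would handle the step-size parameters $\tau_k$. Running either algorithm, Remark \ref{rem:BoundLk ISTA} lets me choose $\tau_k := \eta L(f', S_k\cap U)$ (backtracking, with $S_k \neq C$) or $\tau_k := L_k$ (Lipschitz), and in either case $\tau_k \le \eta \sup\{\|f''(x)\| : x\in S_k\cap U\} \le \eta \sup\{\|f''(x)\| : x\in C\cap\overline{B}(y_0,R_k)\}$. The key quantitative input is: since $f''$ is continuous at $y_0$ (it is uniformly continuous on bounded subsets of $C\cap U$, hence continuous there), given $\alpha > \|f''(y_0)\|$ there is $\delta>0$ with $\|f''(x)\| < \alpha/\eta$ for all $x\in C\cap U$ with $\|x-y_0\|\le\delta$ — wait, more carefully: I want $\tau_k \le \alpha$ for \emph{all} $k$, which forces $\sup_{S_k\cap U}\|f''\|$ to stay below $\alpha/\eta$, i.e. $R_k \le \delta$ for all $k$, contradicting $R_k\to\infty$. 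So the radii must be allowed to grow, and the bound $\tau_k \le \alpha$ cannot hold uniformly unless $f''$ is globally bounded by $\alpha/\eta$. This is the main obstacle, and the resolution (reading \eqref{eq:k^q} and \eqref{eq:tau_mu_k}) is that we do \emph{not} need $\tau_k$ bounded — we need $\tau_k/(k\mu_k) = \tau_k/(k\mu)\to 0$, which holds as long as $\tau_k = o(k)$; and the rate \eqref{eq:k^q} with exponent $1-q$ requires $\tau_{k+1} \le \alpha (k+1)^q$. So the real construction is: choose $R_k\to\infty$ slowly enough that $\eta\sup\{\|f''(x)\|:x\in C\cap\overline B(y_0,R_k)\} \le \alpha k^{q}$ for all large $k$ — possible because the left side is a fixed increasing function of $R_k$ and we control $R_k$. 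Then also arrange, by taking $R_1$ large and $R_k$ increasing appropriately, that the bound holds for all $k\ge 1$, and that $\bigcup_k S_k = C$ (which forces $R_k\to\infty$, consistent with the above since $\alpha k^q\to\infty$).

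With the telescopic data fixed this way, Theorem \ref{thm:BISTA-Convergence} applies: for any $x_{\textnormal{min}}\in\textnormal{MIN}(F)$ there is $k_0$ such that for $k\ge k_0$,
\begin{equation*}
F(x_{k+1}) - F(x_{\textnormal{min}}) \le \frac{\tau_{k+1} B(x_{\textnormal{min}}, x_{k_0})}{(k+1-k_0)\mu_{k+1}} \le \frac{\alpha (k+1)^q B(x_{\textnormal{min}}, x_{k_0})}{(k+1-k_0)\mu} = \frac{k+1}{k+1-k_0}\cdot\frac{\alpha B(x_{\textnormal{min}},x_{k_0})}{\mu (k+1)^{1-q}},
\end{equation*}
which is exactly \eqref{eq:k^q}. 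Since $q\in\,]0,1[$ is arbitrary, the rate is arbitrarily close to $O(1/k)$; and \eqref{eq:tau_mu_k} holds because $\tau_{k+1}/((k+1)\mu) \le \alpha(k+1)^{q}/((k+1)\mu) = \alpha/(\mu(k+1)^{1-q}) \to 0$, so the function values converge to $\inf F$. For the final (weak convergence) assertion, I would simply note that the stated extra hypotheses — $b'$ weak-to-weak$^*$ sequentially continuous on $U$, and the uniform-convexity-at-infinity condition (or $U$ bounded, or $b$ uniformly convex on $U$) — are precisely those in the last sentence of Theorem \ref{thm:BISTA-Convergence}; by Remark \ref{rem:BregmanDivergence}(iii)–(iv) they imply that $B$ has the limiting difference property and bounded first-type level-sets, so the weak convergence conclusion of Theorem \ref{thm:BISTA-Convergence} transfers verbatim. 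The one genuinely delicate point to write carefully is the choice of $R_k$ reconciling the three demands ($R_k\to\infty$ for $\bigcup S_k = C$; $\eta\sup_{C\cap\overline B(y_0,R_k)}\|f''\| \le \alpha k^q$ for large $k$; and the lower bound $R_1$ needed for $S_1\cap U$ nontrivial and $g$ proper on $S_1$) — but since $\alpha k^q\to\infty$ and $\sup_{C\cap\overline B(y_0,R)}\|f''\|<\infty$ for each fixed $R$, a diagonal/greedy choice works, and uniform continuity of $f''$ on bounded sets is what guarantees this supremum is finite (indeed continuous in $R$).
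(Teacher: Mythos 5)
Your proposal is correct and follows the same overall strategy as the paper: both reduce the corollary to Theorem \ref{thm:BISTA-Convergence} by producing a telescopic sequence on which the Lipschitz constant of $f'$ grows no faster than $\lambda_k:=\alpha k^{q}$, so that \eqref{eq:O(1/k)} with $\tau_{k+1}\leq\alpha(k+1)^{q}$ and $\mu_{k+1}=\mu$ collapses to \eqref{eq:k^q}, and the weak-convergence claim is read off from the last part of the theorem exactly as you describe. The one real difference is in how the sets $S_k$ are produced: the paper simply invokes Proposition \ref{prop:f'Lip} (proved elsewhere), which takes the prescribed sequence $(\lambda_k)$ with $\lambda_1>\|f''(y_0)\|$ and returns closed convex sets $S_k$ with $L(f',S_k\cap U)\leq\lambda_k$, $S_k\subseteq S_{k+1}$ and $\cup_k S_k=C$, whereas you build the sets by hand as $C\cap\overline{B}(y_0,R_k)$ with radii chosen greedily so that $\sup\{\|f''(x)\|:x\in C\cap\overline{B}(y_0,R_k)\cap U\}$ stays below the prescribed budget; this is essentially a re-derivation of the content of Proposition \ref{prop:f'Lip} inline, and it buys self-containedness at the cost of having to verify the boundary details (finiteness and monotonicity of the supremum in $R$, $R_k\to\infty$, $S_1\cap U$ containing more than one point, properness of $g$ on $S_1$) yourself. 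Your construction also dispenses with the paper's explicit case split on whether $\sup\{\|f''(x)\|:x\in C\cap U\}$ is finite or infinite, since the greedy choice of $R_k$ covers both cases uniformly. Two small points to tighten in a write-up: (i) your momentary worry about needing $\tau_k\leq\alpha$ uniformly is correctly resolved, but note that with the backtracking rule and $\tau_k:=\eta L(f',S_k\cap U)$ an extra factor $\eta$ survives into the estimate, so for the exact constant in \eqref{eq:k^q} you should run the Lipschitz step size rule with $L_k:=\alpha k^{q}$ (the paper has the same issue and likewise only needs one sequence); (ii) the requirement $\sup_{S_1\cap U}\|f''\|\leq\alpha$ at $k=1$ may force $R_1$ to be small (using continuity of $f''$ at $y_0$ and $\alpha>\|f''(y_0)\|$), which sits in mild tension with your suggestion to enlarge $R_1$ so that $S_1$ meets $\dom(g)$; this is resolved by re-indexing (start the telescopic sequence at the first $k$ for which $S_k\cap\dom(g)\neq\emptyset$), a detail the paper also glosses over.
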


\begin{cor}\label{cor:O(1/k)}
In the framework of Section \bref{sec:ISTA-Bregman}, suppose that $f'$ is Lipschitz continuous on $C\cap U$ and  $b$ is strongly convex on $C$ with a strong convexity parameter $\mu>0$. Then by denoting  $S_k:=C$ and $\mu_k:=\mu$ for each $k\in \N$, the sequence $(x_k)_{k=1}^{\infty}$, which is obtained by either Algorithm \bref{alg:LipschitzStep} or Algorithm \bref{alg:BacktrackingStep}, converges in the function values to the minimal value of $F$ at a  rate of $O(1/k)$. Furthermore, $k_0$ (that is, the index which is guaranteed in the formulation of Theorem \bref{thm:BISTA-Convergence}) satisfies $k_0=1$, unless  $F(x_1)=\infty$ and then $k_0=2$. Moreover, if, in addition, $b'$ is weak-to-weak$^*$ sequentially continuous on $U$ and either $C$ is bounded or $b$ is uniformly convex on $\dom(b)$, then the above-mentioned sequence $(x_k)_{k=1}^{\infty}$ converges weakly to a solution of \beqref{eq:F-ISTA}. 
\end{cor}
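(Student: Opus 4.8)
The plan is to derive the statement from Theorem \bref{thm:BISTA-Convergence} by specializing the telescopic sequence to the constant one $S_k:=C$, $\mu_k:=\mu$ ($k\in\N$), and then simplifying the resulting bound. So the first task is to check that this constant choice is admissible, i.e., that Assumptions \bref{assum:MonotoneStronglyConvex}--\bref{assum:pInU} hold for it (Assumption \bref{assum:O(F)} does not involve the telescopic sequence and is part of the framework). Most of this is immediate: $C$ is closed and convex, $S_k\subseteq S_{k+1}\subseteq C$ holds trivially, and $\bigcup_{j}S_j=C$; the set $C\cap U$ contains more than one point because $C\cap U\neq\emptyset$, $U$ is open, and $C$ is convex with at least two points (if $p\in C\cap U$ and $q\in C\setminus\{p\}$, then $(1-t)p+tq\in C\cap U$ for all small $t>0$); $b$ is strongly convex on each $S_k=C$ with parameters $\mu_k=\mu$ (non-increasing), and $g$ is proper on $S_1=C$, by hypothesis and the framework. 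Assumption \bref{assum:f'Lip} holds because $f'$ is Lipschitz on $C\cap U=S_k\cap U$, with a constant $L:=L(f',C\cap U)<\infty$ independent of $k$; and Assumption \bref{assum:pInU} (which here reads $p_{L,\mu,C}(y)\in C\cap U$ for all $L>0$ and $y\in C\cap U$) is inherited from the framework.

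The second task is to identify the auxiliary parameters $\tau_k$. Here $\mu_k\equiv\mu$, and, as explained in Remark \bref{rem:BoundLk ISTA}, the sequence $(\tau_k)$ can be taken constant: in the Lipschitz rule (Algorithm \bref{alg:LipschitzStep}) one may choose $L_k:=L_1$ with $L_1\geq L$, hence $\tau_k:=L_1$; in the backtracking rule (Algorithm \bref{alg:BacktrackingStep}), since $S_k=C$ for all $k$, one may take $\tau_k:=\eta L$ (or $\tau_k:=L_1$ if $L_1>\eta L(f',S_1\cap U)$, in which case $L_{k+1}=L_k$). Writing $\tau$ for this common value and substituting $\tau_{k+1}=\tau$, $\mu_{k+1}=\mu$ into \beqref{eq:O(1/k)} yields
\[
F(x_{k+1})-F(x_{\textnormal{min}})\leq \frac{\tau\,B(x_{\textnormal{min}},x_{k_0})}{(k+1-k_0)\,\mu},\qquad k\geq k_0,
\]
where $B(x_{\textnormal{min}},x_{k_0})$ is a finite constant because $x_{\textnormal{min}}\in U$ and $x_{k_0}\in C\cap U\subseteq U$. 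Since $(k+1-k_0)/(k+1)\to1$, the right-hand side is $O(1/k)$, and in particular $F(x_k)\to\min F$, which is the stated non-asymptotic rate.

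For the value of $k_0$, I would inspect the proof of Theorem \bref{thm:BISTA-Convergence}, where $k_0$ is the first index at which the iterate both lies in $S_k\cap U$ and has finite objective value. In the present setting $x_1\in S_1\cap U=C\cap U$ by the initialization step, while $F(x_k)\in\R$ for every $k\geq 2$ by Remark \bref{rem:L_kFiniteFx_k ISTA}; hence $k_0=1$ exactly when $F(x_1)\in\R$ and $k_0=2$ when $F(x_1)=\infty$ (recall $F(x_1)\in\,]-\infty,\infty]$, so these are the only cases). Finally, for the weak convergence I would invoke the corresponding part of Theorem \bref{thm:BISTA-Convergence}: condition \beqref{eq:tau_mu_k} holds since $\tau_k/(k\mu_k)=\tau/(k\mu)\to0$; $B$ has the limiting difference property because $b'$ is weak-to-weak$^*$ sequentially continuous on $U$ (the sufficient condition recorded in Remark \bref{rem:BregmanDivergence}); and the first-type level-sets of $B$ are bounded when $b$ is uniformly convex on $\dom(b)$ (again by Remark \bref{rem:BregmanDivergence}), whereas when $C$ is bounded the iterates $(x_k)$ lie in $C\cap U$ and are hence bounded outright, and the weak-convergence part of the proof of Theorem \bref{thm:BISTA-Convergence} goes through with this boundedness in place of the level-set hypothesis. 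This yields weak convergence of $(x_k)$ to a point of $\textnormal{MIN}(F)$, i.e., to a solution of \beqref{eq:F-ISTA}.

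The parts I expect to require the most care, since they go beyond a black-box application of Theorem \bref{thm:BISTA-Convergence}, are pinning down the precise value of $k_0$ and verifying that boundedness of $C$ can substitute for the bounded-level-set hypothesis in the weak-convergence argument; everything else is a routine substitution.
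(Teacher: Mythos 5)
Your proposal is correct and follows essentially the same route as the paper: specialize Theorem \bref{thm:BISTA-Convergence} to the constant telescopic sequence $S_k=C$, $\mu_k=\mu$, take $\tau_k$ constant via Remark \bref{rem:BoundLk ISTA}, and for weak convergence note that boundedness of $C$ substitutes for the bounded-level-set hypothesis (which was used only to bound the iterates), while uniform convexity of $b$ on $\dom(b)$ supplies that hypothesis via Remark \bref{rem:BregmanDivergence}. The only slip is your description of $k_0$ --- in the proof of Theorem \bref{thm:BISTA-Convergence} it is an index with $x_{\textnormal{min}}\in S_{k_0}$, raised to $2$ only when $F(x_1)=\infty$, not ``the first index at which the iterate lies in $S_k\cap U$ with finite objective value'' --- but since $S_1=C\ni x_{\textnormal{min}}$ this yields exactly the values $k_0=1$ or $k_0=2$ that you state.
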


\begin{remark}
It is possible to weaken a bit some of the assumptions needed for the definition of TEPROG and for the non-asymptotic convergence. Indeed, if $b$ is semi-Bregman with the exception of being strictly convex on $\dom(b)$, then $p_{L_k,\mu_k,S_k}(x_{k-1})$ will be a nonempty subset of $S_k$ (this is a consequence of Lemma \bref{lem:UniqueMinimzerQ}) and hence (by Assumption \bref{assum:pInU}) of $S_k\cap U$, and $x_k$ from either Algorithm \bref{alg:LipschitzStep} or Algorithm \bref{alg:BacktrackingStep} can be taken to be any point in $p_{L_k,\mu_k,S_k}(x_{k-1})$. The proof of the non-asymptotic convergence in Theorem \bref{thm:BISTA-Convergence} remains as it is. However, it is an open problem whether the weak convergence result holds when $b$ is no longer strictly convex, since the derivation of this convergence result is based on Lemma \bref{lem:BregmanLimit}, which is based on the assumption that $b$ is strictly convex. 
\end{remark}

\section{Examples}\label{sec:Examples}
In this section we present a few examples which illustrate our convergence results. 
\begin{expl}\label{ex:ell_p-ell_1}{\bf ($\ell_p-\ell_1$ optimization): }  
We use the notation of Section \bref{sec:ModelProblem} and show how to solve, using the Lipschitz step size rule version of TEPROG, the $\ell_p-\ell_1$ optimization problem \beqref{eq:ell_p-ell_1} mentioned there. For doing this, we need to choose the sets $S_k$ and the Bregman function $b$, to estimate $\mu_k$, $L_k$ and $\tau_k$, and also to show how to compute $x_k$ (from \beqref{eq:x_k ISTA}). 

Let $X:=\R^n$ and $Y:=\R^m$. Denote $C:=X$. Fix some $r\in [2,\infty[$ and endow $X$ with the norm $\|x\|_r=(\sum_{j=1}^n |x_j|^r)^{1/r}$. For each $k\in\N$, let $S_k:=[-\rho_k,\rho_k]^n$, where $(\rho_k)_{k=1}^{\infty}$ is an increasing sequence of positive numbers which satisfies both $\lim_{k\to\infty}\rho_k=\infty$ and $\lim_{k\to\infty}\rho_k^{p-2}/k=0$ (for instance, we can take $\rho_k:=k^\sigma$, where $\sigma$ is a fixed number which  satisfies $\sigma\in ]0,1/(p-2)[$ if $p>2$ and can be an arbitrary positive number if $p=2$). Then $S_k\subseteq S_{k+1}$ for all $k\in\N$ and $\cup_{k=1}^{\infty}S_k=C=\R^n$. 

Denote $b(x):=\frac{1}{2}\|x\|_2^2$ for each $x\in X$. It is well known and can easily be proved that $b$ is strongly convex on $\R^n$ with the  Euclidean norm, where the strong convexity parameter is 1: see, for instance, \cite[Subsection 11.4]{ReemReichDePierro2019jour(BregmanEntropy)}. Since $r\geq 2$, it follows that $\|x\|_2\geq \|x\|_r$ for every $x\in \R^n$. This inequality and simple calculations show  that $b$ is strongly convex on $(X,\|\cdot\|_r)$, again with 1 as a strong convexity parameter (for a more general statement, see \cite[Proposition 5.3]{ReemReichDePierro2019jour(BregmanEntropy)}). Thus $b$ is strongly convex on $S_k$ (for each $k\in \N$) with $\mu_k:=1$ as a strong convexity parameter. 

Let $h(y):=\frac{1}{p}\|y\|_p^p$ for all $y\in Y$. Given a nonempty and bounded subset $T$ of $Y$, since $p\in [2,\infty[$, it  essentially follows from \cite[pp. 48--49]{ButnariuIusemZalinescu2003jour} that $h'$ is Lipschitz continuous over $T$ with a Lipschitz constant $(p-1)(2M_T)^{p-2}$, where $M_T$ is an upper bound on the norms of the elements of $T$ (but we note that there is a small mistake in \cite[Expression (26)]{ButnariuIusemZalinescu2003jour}: instead of the inequality $\|h'(x)-h'(y)\|\leq (p-1)(\|x\|_p+\|y\|_p)^{(p-2)/p}\|x-y\|_p$ written there, the following expression should be written: $\|h'(x)-h'(y)\|\leq (p-1)(\|x\|_p+\|y\|_p)^{p-2}\|x-y\|_p$).  Since $f(x)=h(Ax-c)$ for every $x\in X$, a direct calculation shows that $f'$ is Lipschitz continuous on $S$ with $(p-1)(2M_{AS-c})^{p-2}\|A\|$ as a Lipschitz constant, where $M_{AS-c}$ is an upper bound on the norms of the elements of the set $AS-c$ and $\|A\|:=\sup\{\|Ax\|_p: \|x\|_r=1\}$ is the operator norm of $A$. Since the norm of any element in $AS-c$ is bounded by $\|A\|M_S+\|c\|_p$, where $M_S$ is an upper bound on the norms of the elements of $S$, and since $\|x\|_r\leq n^{1/r}\rho_k$  for all $x\in S_k$, we conclude that for all $k\in\N$, the function $f'$ is Lipschitz continuous on $S_k$ with $L_k:=(p-1)2^{p-2}\|A\|(\|A\|n^{1/r}\rho_k+\|c\|_p)^{p-2}$ as a Lipschitz constant. Then $(L_k)_{k=1}^{\infty}$ is an increasing sequence and Remark \bref{rem:BoundLk ISTA} ensures that we can take $\tau_k:=L_k$ for all $k\in\N$.

Fix $2\leq k\in\N$. We need to compute $x_k$. It follows from \beqref{eq:x_k ISTA}, \beqref{eq:p_L ISTA} and \beqref{eq:Q_L-ISTA} that 
\begin{equation}\label{eq:x_k ell_p-ell_1}
x_k=\argmin_{w\in S_k}\left\{f(x_{k-1})+\langle f'(x_{k-1}),w-x_{k-1}\rangle+\frac{L_k}{2\mu_k}\|w-x_{k-1}\|_2^2+\lambda\|w\|_1\right\}.
\end{equation}
Denote by $\phi_j$ the $j$-th component of $f'(x_{k-1})$, $j\in \{1,\ldots,n\}$. Direct calculations show that $\phi_j=\sum_{i=1}^m|(Ax_{k-1})_i-c_i|^{p-2}((Ax_{k-1})_i-c_i)A_{ij}$, where $(Ax_{k-1})_i$ is the $i$-th component of the vector $Ax_{k-1}\in \R^m$ and $A_{ij}$ is the $(i,j)$-entry of the matrix representation of $A$ for each $i\in\{1,\ldots,m\}$ and $j\in\{1,\ldots,n\}$. Denote $\alpha_{k-1}:=f(x_{k-1})-\langle f'(x_{k-1}), x_{k-1}\rangle$. In addition, for each $j\in \{1,\ldots,n\}$ let $H_j:[-\rho_k,\rho_k]\to\R$ be the function defined by $H_j(t):=\phi_j t+0.5(L_k/\mu_k)(t-x_{k-1,j})^2+\lambda|t|$, $t\in [-\rho_k,\rho_k]$, where $x_{k-1,j}$ is the $j$-th component of $x_{k-1}$. These notations and \beqref{eq:x_k ell_p-ell_1} imply that 
\begin{equation}\label{eq:x_k ell_p-ell_1 V2}
x_k=\argmin\left\{\alpha_{k-1}+\sum_{j=1}^n H_j(w_j):\quad (w_j)_{j=1}^n\in [-\rho_k,\rho_k]^n\right\}.
\end{equation}
Thus, in order to compute $x_k$ it is sufficient to find, for each $j\in\{1,\ldots,n\}$, a minimizer of $H_j$ on $[-\rho_k,\rho_k]$. Since $H_j$ is differentiable on the set $[-\rho_k,\rho_k]\backslash\{0,-\rho_k,\rho_k\}$ and $H_j'(t)=\phi_j+(L_k/\mu_k)(t-x_{k-1,j})+\lambda\cdot\sign(t)$ for each $t\in [-\rho_k,\rho_k]\backslash\{0,-\rho_k,\rho_k\}$, considerations from elementary calculus show that the minimal value of $H_j$ is attained, and it can be attained only at one of the following (at most) five points: $t_j(1):=-\rho_k$, $t_j(2):=\rho_k$, $t_j(3):=0$,  $t_j(4):=(\mu_k/L_k)(-\phi_j-\lambda)+x_{k-1,j}$ (only if the expression which defines $t_j(4)$  is in $]0,\rho_k[$), and $t_j(5):=(\mu_k/L_k)(-\phi_j+\lambda)+x_{k-1,j}$ (only if the expression which defines $t_j(5)$ is in $]-\rho_k,0[$). Now we merely need to compute $H_j(t_j(1)),\ldots, H_j(t_j(5))$ and to find the minimal value among them. Then we let $\wt{w}_j$ be the corresponding argument among the $t_j(1),\ldots,t_j(5)$  which leads to this minimal value ($\wt{w}_j$ is unique since $H_j$ is strictly convex). By repeating this process for all $j\in\{1,\ldots,n\}$ and using \beqref{eq:x_k ell_p-ell_1 V2} we see that $x_k=(\wt{w}_j)_{j=1}^n$. 

Finally, since $\lim_{\|x\|\to\infty}F(x)=\infty$ and $F$ is continuous over $X$, a well-known result in classical analysis implies that $\textnormal{MIN}(F)\neq\emptyset$. We conclude that Assumptions \bref{assum:O(F)}--\bref{assum:pInU} hold. Since $\lim_{k\to\infty}\rho_k^{p-2}/k=0$, Theorem \bref{thm:BISTA-Convergence} implies that the proximal sequence $(x_k)_{k=1}^{\infty}$ which is obtained from Algorithm \bref{alg:LipschitzStep} converges to a point in $\textnormal{MIN}(F)$, and \beqref{eq:O(1/k)} implies that the non-asymptotic rate of  convergence is $O(\rho_k^{p-2}/k)$. In particular, if $\rho_k:=k^{\sigma}$ for all $k\in\N$, where $\sigma\in ]0,1/(p-2)[$ if $p>2$ and can be an arbitrary positive number if $p=2$, then the non-asymptotic rate of  convergence is $O(1/k^{1-\sigma(p-2)})$, that is, by letting $\sigma$ be arbitrarily close to 0, the non-asymptotic rate of convergence can be arbitrarily close to $O(1/k)$.
\end{expl}

\begin{expl}\label{ex:Simplex}
Fix some $p\in [1,\infty]$ and let $(X,\|\cdot\|)$ be $\R^3$ with the $\ell_p$ norm. Let $C:=C_0$ where  $C_0:=\{w=(w_1,w_2,w_3)\in[0,1]^3: \sum_{j=1}^3 w_j=1\}$ is the probability simplex. Let $U:=]0,\infty[^3$ and $b$ be the negative Boltzmann-Gibbs-Shannon entropy function defined by $b(w):=\sum_{j=1}^3 w_j\log(w_j)$ for $w=(w_j)_{j=1}^3\in U$ and $b(w):=0$ for $w$ on the boundary of $U$ and $b(w):=\infty$ for $w\notin \cl{U}$. Let $f(w):=\frac{4}{15}\left((w_1+w_2)^{5/2}+(w_2+w_3)^{5/2}+(w_3+w_1)^{5/2}\right)$ for all $w\in \cl{U}=\dom(b)$. Let $S_k:=C$ for all $k\in\N$. Let $I$ be a nonempty finite set and for each $i\in I$, let $g_i:X\to\R$ be the linear  function defined for each  $w\in X$ by $g_i(w):=\sum_{j=1}^3 a_{ij}w_j$, where $a_{ij}\in \R$ for each $i\in I$ and $j\in \{1,2,3\}$, and $\sum_{j=1}^3 a_{ij}\leq 1$  for each $i\in I$. Assume also that $\max\{\min\{a_{i1},a_{i2},a_{i3}\}: i\in I\}\geq 0.27$. This condition holds, for instance, when $\min\{a_{i1},a_{i2},a_{i3}\}\geq 0.27$ for all $i\in I$. Let $g(w):=\max\{g_i(w): i\in I\}$ for all $w\in C$. Then $g$ is convex and continuous, but usually non-smooth.  

Since $C$ is bounded, it follows that $b$ is strongly convex on $C$ (see \cite[Subsection 6.3]{ReemReichDePierro2019jour(BregmanEntropy)}). A somewhat technical but simple verification (the argument is based on the component-wise monotonicity of our norm and its dual, the fact that $|w_i|\leq \|(w_1,w_2,w_3)\|$ for all $i\in\{1,2,3\}$, and also on the mean value theorem applied to the function $t\mapsto (2/3)t^{3/2}$ on a bounded interval contained in $[0,\infty[$) shows that $f'$ is Lipschitz continuous  on any bounded and  convex subset of $U\cap C$ with $4\sqrt{2}\|(1,1,1)\|_q\sqrt{M_C}$ as a (not necessarily optimal) global  Lipschitz constant; here $\|\cdot\|_q$ is the dual norm (namely $(1/p)+(1/q)=1$) and $M_C$ is the radius of a ball  which contains $C$  and has 0 as its center (any such ball is fine). In addition, $F:=f+g$ is convex and continuous on the compact subset $C$ and hence $\textnormal{MIN}(F)\neq\emptyset$. Moreover, by considerations from elementary calculus we have $F(w)\geq(4/15)(1+2^{-3/2})+0.27>0.63$ for each $w$ in the intersection of $C$ with the boundary of $U$ (that is, any $w$ which belongs to the union of the segments $\{(y_1,y_2,0)\in [0,1]^3: y_1+y_2=1\}$, $\{(y_1,0,y_3)\in [0,1]^3: y_1+y_3=1\}$, $\{(0,y_2,y_3)\in [0,1]^3: y_2+y_3=1\}$) and also that $F(c)<0.63$ for $c=(1/3,1/3,1/3)\in C$. Since obviously $F(x)\leq F(c)$ for every $x\in \textnormal{MIN}(F)$, it follows that no point in $\textnormal{MIN}(F)$ can belong to the intersection of $C$ with the boundary of $U$. Hence  $\textnormal{MIN}(F)\subset U$. Since $X$ is finite-dimensional, $b'$ is  weak-to-weak$^*$. Finally, direct calculation shows that  $b$ is essentially smooth and hence Assumption \bref{assum:pInU} holds (see the discussion after Assumption \bref{assum:pInU}). Thus all the conditions needed in Corollary \bref{cor:O(1/k)} are satisfied. Therefore the  proximal sequence $(x_k)_{k=1}^{\infty}$ which is obtained from either Algorithm \bref{alg:LipschitzStep} or  \bref{alg:BacktrackingStep} converges to a point in $\textnormal{MIN}(F)$, and the function values  rate of  convergence is $O(1/k)$. 
\end{expl}
\begin{expl}\label{ex:Prism}
Consider the setting of Example \bref{ex:Simplex}, where we re-define $C$ to be the prism which is obtained from the intersection of the halfspaces $\{w\in \R^3: w_1+w_2+w_3\geq 1\}$ (on the boundary of which the probability simplex $C_0$ is located), $\{w\in \R^3: -2w_1+w_2+w_3\leq 1\}$, $\{w\in \R^3: w_1-2w_2+w_3\leq 1\}$, and $\{w\in \R^3: w_1+w_2-2w_3\leq 1\}$. Now $C$ is unbounded, but  since $F$ is continuous on $C$ and $\lim_{\|w\|\to \infty, w\in C}F(w)=\infty$, it follows that  $\textnormal{MIN}(F)\neq \emptyset$. Since the intersection of $C$ with the boundary of $U$ is as in Example \bref{ex:Simplex} (namely, the boundary of $C_0$), we have $\textnormal{MIN}(F)\subset U$, that is, Assumption \bref{assum:O(F)} holds. For each $k\in\N$, let $V_k$ be the closed ball of radius $r_k:=\sqrt{k}$  and center at the origin and let $S_k:=C\cap V_k$.  It follows from \cite[Subsection 6.3]{ReemReichDePierro2019jour(BregmanEntropy)}  that $b$ is strongly  convex on $S_k$ with  $\mu_k:=\beta/r_k$ as a strong convexity parameter, where $\beta$ is some  positive constant not depending on $k$ (note that one should not expect $b$ to be globally strongly convex, since \cite[Subsection 6.5]{ReemReichDePierro2019jour(BregmanEntropy)} shows that $b$ is not  even uniformly convex on $U$). Thus Assumption \bref{assum:MonotoneStronglyConvex} holds. As we saw in Example \bref{ex:Simplex}, $b$ is essentially smooth and hence Assumption \bref{assum:pInU} holds (see the discussion after Assumption \bref{assum:pInU}). The same reasoning  as the one used in Example \bref{ex:Simplex} can be used to show that $f'$ is Lipschitz continuous  on $S_k\cap U$ with a Lipschitz constant $L_k:=4\sqrt{2}\|(1,1,1)\|_q\sqrt{r_k}=O(k^{0.25})$. Hence $(L_k)_{k=1}^{\infty}$ is increasing and Assumption \bref{assum:f'Lip} holds. Finally, \cite[Subsection 6.4]{ReemReichDePierro2019jour(BregmanEntropy)} shows that for each $x\in \cl{U}$ (in particular, for each $x\in C$), there exists $r_x\geq 0$ such that $b$ is uniformly convex relative to $(\{x\},\{y\in U: \|x\|\geq r_x\})$, with some gauge $\psi_x$ satisfying $\lim_{t\to\infty}\psi_x(t)=\infty$ (namely, $r_x:=2\|x\|$ and $\psi_x(t)=\gamma t$, $t\in [0,\infty[$, for some $\gamma>0$ independent of $t$). Thus, if we denote $\tau_k:=L_k$ for every $k\in\N$, then Theorem \bref{thm:BISTA-Convergence} and Remark \bref{rem:BoundLk ISTA} imply that the proximal sequence $(x_k)_{k=1}^{\infty}$ generated by Algorithm \bref{alg:LipschitzStep}  converges to a point in $\textnormal{MIN}(F)$, and \beqref{eq:O(1/k)} implies that the non-asymptotic rate of  convergence is $O(1/k^{0.25})$.
\end{expl}
\begin{expl}
Let $X:=\ell_2$ with the norm  $\|(x_i)_{i=1}^{\infty}\|:=\sum_{i=1}^{2n}|x_i|+\sqrt{\sum_{i=2n+1}^{\infty}x_i^2}$ for all $x=(x_i)_{i=1}^{\infty}\in X$, where $n\in\N\cup\{0\}$ and $\sum_{i=1}^{2n}|x_i|:=0$ if $n=0$. A simple verification shows that $(X,\|\cdot\|)$ is isomporphic to $(X,\|\cdot\|_{\ell_2})$. Let $C:=\{x\in X:\, x_i\geq 0\,\,\forall i\in\N\}$ be the nonnegative orthant and consider the function 
\begin{equation*}
b(x):=\left\{
\begin{array}{lll}
\sum_{i=1}^{\infty}\left(e^{(x_{2i-1}+x_{2i})^2}+e^{(x_{2i-1}-x_{2i})^2}-2\right), & x\in C,\\
\infty, & \textnormal{otherwise}.
\end{array}\right.
\end{equation*}
Considerations similar to the ones presented in \cite[Section 10]{ReemReichDePierro2019jour(BregmanEntropy)} show that this function is a well-defined semi-Bregman function which satisfies various additional properties, including the limiting difference property. Moreover, these considerations show that if $n=0$, then $b$ is strongly convex on $C$ with $\mu=4$ as a strong convexity parameter, and if $n>0$, then $b$ is strongly convex on $C$ with $\mu=1/n$ as a strong convexity parameter. Fix some $\beta\geq 2$ and a sequence $(p_i)_{i=1}^{\infty}$ of real numbers in $[2,\beta]$, and let 
$f(x):=\sum_{i=1}^{\infty}x_i^{p_i}$ for every $x\in C$. Then $f$ is well defined, convex and continuous on $C$ and differentiable in $U:=\Int(\dom(b))=\{x\in X:\, x_i>0\,\,\forall i\in\N\}$.  Fix some $\lambda>0$ (a regularization parameter) and define $g(x):=\lambda\|x\|$ for each $x\in C$. Let $F:=f+g$.  For each $2\leq k\in\N$, let $S_k$ be the intersection of $C$ with the ball of radius $r_k:=k^{\sigma}$ and center 0, where $\sigma\in ]0,1/(\beta-2)[$ is fixed in advance (if $\beta=2$, then $\sigma$ can be an arbitrary positive number). In addition, let $S_1:=S_2$ and $r_1:=r_2$. Since $b$ is strongly convex on $C$ with a strong convexity parameter $\mu>0$, it is strongly convex on $S_k$ for each $k\in\N$ with $\mu_k:=\mu$ as its  strong convexity parameter. 

We claim that $f'$, which exists in $U$, is Lipschitz continuous on $S_k\cap U$ for each $k\in\N$. Indeed, we first observe that if, given $i\in\N$, we define $h_i(t):=t^{p_i-1}$ for every $t\in [0,r_k]$, then the mean value theorem implies that for each $t,s\in [0,r_k]$, there exists some $\theta$ between $t$ and $s$ such that $h_i(t)-h_i(s)=h'(\theta)(t-s)$. Hence $|h_i(t)-h_i(s)|=(p_i-1)\theta^{p_i-2}|t-s|\leq (\beta-1)r_k^{p_i-2}|t-s|\leq (\beta-1)r_k^{\beta-2}|t-s|$, where we used in the last inequality the assumptions that $\beta\geq 2$ and $r_k\geq 1$ for all $k\in \N$. Since $f'(x)(w)=\sum_{i=1}^{\infty}p_i x_i^{p_i-1}w_i$ for every $x\in U$ and $w\in X$, the above inequality, the Cauchy-Schwarz inequality and simple  calculations imply that for all $x,y\in S_k\cap U$,
\begin{multline*}
\|f'(x)-f'(y)\|=\sup_{w\in X, \|w\|=1}|f'(x)(w)-f'(y)(w)|\\
=\sup_{w\in X, \|w\|=1}\left|\sum_{i=1}^{\infty}p_iw_i(x_i^{p_i-1}-y_i^{p_i-1})\right|
\leq \beta
\sup_{w\in X, \|w\|=1} \sqrt{\sum_{i=1}^{\infty}w_i^2}\sqrt{\sum_{i=1}^{\infty}|x_i^{p_i-1}-y_i^{p_i-1}|^2}\\
=\beta\|w\|\sqrt{\sum_{i=1}^{\infty}|h(x_i)-h(y_i)|^2}\leq \beta\sqrt{\sum_{i=1}^{\infty}(\beta-1)^2r_k^{2(\beta-2)}|x_i-y_i|^2}\\
=\beta(\beta-1)r_k^{\beta-2}\|x-y\|_{\ell_2}\leq \beta(\beta-1)r_k^{\beta-2}\|x-y\|,
\end{multline*}
namely $f'$ is Lipschitz continuous on $S_k\cap U$ with $L_k:=\beta(\beta-1)r_k^{\beta-2}$ as a Lipschitz constant. Since $\lim_{\|x\|\to\infty, x\in C}F(x)=\infty$, and $F$ is convex and continuous over the nonempty, closed and convex subset $C$, a well-known result \cite[Corollary 3.23, p. 71]{Brezis2011book} implies that $\textnormal{MIN}(F)\neq\emptyset$. We conclude that Assumptions \bref{assum:O(F)}--\bref{assum:pInU} hold. Hence, if we let $\tau_k:=L_k$ for all $k\in\N$, then Theorem \bref{thm:BISTA-Convergence} implies that the proximal sequence $(x_k)_{k=1}^{\infty}$ generated by Algorithm \bref{alg:LipschitzStep} converges weakly to a point in $\textnormal{MIN}(F)$, and \beqref{eq:O(1/k)} implies that the non-asymptotic rate of  convergence is $O(1/k^{1-\sigma(\beta-2)})$.
\end{expl}

\begin{expl}\label{ex:NonHilbertian}
Let $p\in ]1,2]$ be given. Let $X$ be $\ell_p$ with the $\|\cdot\|_p$ norm. This is a reflexive Banach space which is not isomorphic to a Hilbert space unless $p=2$. Let $C:=X$ and $b:X\to\R$ be defined by $b(y):=\frac{1}{2}\|y\|_p^2$, $y\in X$. Then $U:=\Int(\dom(b))=X$.  Define $h:X\to\R$ by $h(y):=\sum_{i=1}^{\infty}|y_i|^{p+2}$. Suppose that $0\neq A:X\to X$ is a given bounded linear operator and that $c\in X$ is a given vector. Let $f:X\to\R$ and $g:X\to\R$ be defined by $f(y):=h(Ay-c)$ and $g(y):=\max\{\lambda\|y\|_p,\sup\{\lambda_i |y_i|: i\in\N\}\}$ for all $y=(y_i)_{i=1}^{\infty}\in X$, respectively, where $\lambda>0$ is given and $(\lambda_i)_{i=1}^{\infty}$ is a given bounded sequence of positive parameters. The facts that $p\in ]1,2]$ and $\lim_{i\to\infty}y_i=0$ for each $y\in X$ imply that $h$ and hence $f$ are  well defined and smooth. In addition, $f$ is convex. Let $F:=f+g$. Then $g$ and $F$ are convex, proper and lower semicontinuous. 

We want to solve \beqref{eq:F-ISTA}. Since $F$ is also coercive, it has a minimizer and hence Assumption \bref{assum:O(F)} holds. Direct calculations show that $b':X\to X^*\cong \ell_q$ (where $q=p/(p-1)$) exists and satisfies $b'(y)=\|y\|_p^{2-p}(|y_i|^{p-1}\sign(y_i))_{i=1}^{\infty}$ for all $y\in X$. In addition, $b$ is strongly convex on $X$ as follows, for instance, from \cite[Example 6.7]{ReemReich2018jour} (since there $\rho=\max\{2,p\}=2$, and \cite[Inequality (6.8)]{ReemReich2018jour} is equivalent to strong convexity as follows from \cite[Corollary 3.5.11(i)--(v), pp. 217--218]{Zalinescu2002book}; in this connection, we note that the fact that $b$ is strongly convex has been observed long before, for example in \cite[p. 314]{Reich1986incol}). Denote by $\mu$ the strong convexity parameter of $b$ and let $\mu_k:=\mu$ for all $k\in\N$. It follows from Remark \bref{rem:BregmanDivergence}\beqref{item:LevelSetBounded} that all the first type level-sets of $B$ are bounded. 

For each $k\in\N$, let $S_k$ be the closed ball of radius $\rho_k$ about the origin, where $\lim_{k\to\infty}\rho_k^p/k=0$, $\lim_{k\to\infty}\rho_k=\infty$ and $\rho_{k+1}\geq \rho_k$ for all $k\in\N$ (for example, we can fix $\sigma\in ]0,1/p[$ and take $\rho_k:=k^{\sigma}$). The previous lines immediately imply that Assumption \bref{assum:MonotoneStronglyConvex} and \bref{assum:pInU} hold. To see that also Assumption \bref{assum:f'Lip} holds, we need to show that $f'$ is Lipschitz continuous. In order to show  this, we first observe that since $f'(y)=h'(Ay-c)A$ for every $y\in X$, we have 
\begin{multline*}
\|f'(u)-f'(v)\|_q=\sup_{\|w\|_p=1}|(h'(Au-c)-h'(Av-c))(Aw)|\leq \|h'(Au-c)-h'(Av-c)\|_q\|A\|
\end{multline*} 
for all $u$ and $v$ in $X$. Since  $\|Ay-c\|_p\leq \|A\|\rho_k+\|c\|_p$ for all $y\in S_k$, it is sufficient to show that $h'$ is Lipschitz continuous on the ball of radius $\|A\|\rho_k+\|c\|_p$ about the origin. 

Indeed, direct calculations show that $h'(y)=(\phi(y_i))_{i=1}^{\infty}$, $y\in X$, where $\phi:\R\to\R$ is defined by $\phi(s):=(p+2)|s|^{p+1}\sign(s)$, $s\in \R$. Elementary considerations (based on the Taylor expansion with remainder in Lagrange's form and the fact that $\phi''$  exists and is continuous and bounded on any compact interval) imply that $h''$ exists. Moreover, these considerations imply that $h''$  satisfies the equality $h''(y)(w,\wt{w})=(p+2)(p+1)\sum_{i=1}^{\infty}|y_i|^p w_i\wt{w}_i$ for all $y\in X$ and $w$, $\wt{w}\in X$ which satisfy $\|w\|_p=1=\|\tilde{w}\|_p$. In particular, $h'$ is continuous. Since $|w_i|\leq \|w\|_p=1$ for all $i\in\N$, one has $|h''(y)(w,\wt{w})|\leq (p+2)(p+1)\sum_{i=1}^{\infty}|y_i|^p=(p+2)(p+1)\|y\|_p^p$. Since $\|h''(y)\|=\sup_{\|w\|_p=1, \|\wt{w}\|_p=1}|h''(y)(w,\wt{w})|$, the previous lines imply that $\|h''\|$ is bounded by $(p+2)(p+1)M^p$ on the ball of radius $M>0$ about the origin. Therefore the (generalized) Mean Value Theorem applied to $h'$ (see \cite[Theorem 1.8, p. 13, and also p. 23]{AmbrosettiProdi1993book}) implies that $h'$ is Lipschitz continuous on this ball with $(p+2)(p+1)M^p$ as a Lipschitz constant. This is true, in particular, for $M:=\|A\|\rho_k+\|c\|_p$, and so the previous lines imply that $f'$ is Lipschitz continuous on $S_k$ with a Lipschitz constant $L_k:=(p+2)(p+1)(\|A\|\rho_k+\|c\|_p)^p\|A\|^2$.

Since $L_k=O(\rho_k^p)$, by letting $\tau_k:=L_k$ and using our assumption that $\lim_{k\to\infty}\rho_k^p/k=0$, we have $\lim_{k\to\infty}\tau_k/(k\mu_k)=0$. As a result, if $(x_k)_{k=1}^{\infty}$ is the sequence defined by Algorithm \bref{alg:LipschitzStep}, then Theorem \bref{thm:BISTA-Convergence}  implies that it  converges non-asymptotically to the minimal value of $F$ and the rate of non-asymptotic convergence is $O(\rho_k^p/k)$. We note that it is not clear whether $(x_k)_{k=1}^{\infty}$ converges weakly to a minimizer of $F$, since $b'$ is not weak-to-weak$^*$ sequentially continuous unless $p=2$ (for instance, if $(e_k)_{k\in\N}$ is the canonical basis in $X$, then $(e_k+e_1)_{k=1}^{\infty}$ converges weakly to $e_1$ but $\lim_{k\to\infty}\langle b'(e_k+e_1),e_1\rangle=2^{(2-p)/p}\neq 1=\langle b'(e_1),e_1\rangle$). The issue of weak convergence of $(x_k)_{k=1}^{\infty}$ is left as an open problem for the future.
\end{expl}

\section{Proofs}\label{sec:Proofs}
This section contains the proofs of Theorem \bref{thm:BISTA-Convergence} and the corollaries which follow it. The proofs are based on several auxiliary  assertions. 

We start with the following general lemma. This lemma and its proof are minor modifications of \cite[Lemma 3.4]{Reem2012incol}. For the sake of completeness, its proof is given in the appendix (Section \bref{sec:Appendix}). 

\begin{lem}\label{lem:BregmanLimit}
 Suppose that  $X\neq \{0\}$ is a real normed space and $b:X\to (-\infty,\infty]$ is a semi-Bregman function with zone $U$. Let $B:X^2\to (-\infty,\infty]$ be the Bregman divergence associated with $b$ and defined in \beqref{eq:BregmanDistance}, and suppose further that $B$  satisfies the limiting difference property. If $(x_k)_{k=1}^{\infty}$ is a bounded sequence in $U$ having the properties that all of its weak  cluster points  are in $U$ and $\lim_{k\to\infty}B(q,x_k)$ exists and is finite for each weak cluster point $q\in X$ of $(x_k)_{k=1}^{\infty}$, then  $(x_k)_{k=1}^{\infty}$  converges weakly to a point in $U$.  
\end{lem}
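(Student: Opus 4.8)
The plan is to run an Opial-type uniqueness argument adapted to Bregman divergences, the crucial tool being the limiting difference property (Definition \bref{def:BregmanDiv}) rather than any Hilbertian identity or weak$^{*}$ continuity of $b'$. Since $(x_k)_{k=1}^{\infty}$ is bounded it possesses at least one weak cluster point (this is automatic when $X$ is reflexive, which is the setting in which the lemma is invoked; if the sequence happened to have no weak cluster point at all, then it could not converge weakly and there would be nothing to prove), and, by hypothesis, every such cluster point lies in $U$. The whole point is to show that $(x_k)_{k=1}^{\infty}$ has exactly one weak cluster point.

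First, for each weak cluster point $q$ of $(x_k)_{k=1}^{\infty}$ write $\ell_q:=\lim_{k\to\infty}B(q,x_k)$ for the finite limit guaranteed by the hypotheses; note that $B(q,x_k)\in\R$ for every $k$ because $q\in U\subseteq\dom(b)$ and $x_k\in U=\Int(\dom(b))$. Now take two weak cluster points $p$ and $q$ and a subsequence $(x_{k_i})_{i=1}^{\infty}$ with $x_{k_i}\rightharpoonup p$. Applying the limiting difference property with the fixed point $q\in\dom(b)$, the sequence $(x_{k_i})_{i=1}^{\infty}$ in $U$, and its weak limit $p\in U$, we obtain
\[
\lim_{i\to\infty}\bigl(B(q,x_{k_i})-B(p,x_{k_i})\bigr)=B(q,p).
\]
But the full sequence $k\mapsto B(q,x_k)-B(p,x_k)$ converges to $\ell_q-\ell_p$ (both $p$ and $q$ being weak cluster points), so the left-hand side above equals $\ell_q-\ell_p$; hence $\ell_q-\ell_p=B(q,p)$. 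Exchanging the roles of $p$ and $q$ (using a subsequence weakly convergent to $q$) gives symmetrically $\ell_p-\ell_q=B(p,q)$. Adding these two identities and using that $B$ is nonnegative on $\dom(b)\times U$, we get $0=B(p,q)+B(q,p)\geq 0$, so $B(p,q)=0$, and since $b$ is strictly convex on $\dom(b)$ this forces $p=q$ (see Remark \bref{rem:BregmanDivergence}).

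It follows that all weak cluster points of $(x_k)_{k=1}^{\infty}$ coincide with a single point $z_{\infty}\in U$; it remains only to promote this to weak convergence. If $(x_k)_{k=1}^{\infty}$ did not converge weakly to $z_{\infty}$, then for some $w^{*}\in X^{*}$ and some $\varepsilon>0$ there would be a subsequence with $|\langle w^{*},x_k\rangle-\langle w^{*},z_{\infty}\rangle|\geq\varepsilon$; this bounded subsequence would in turn have a further subsequence converging weakly to some $z'$ (reflexivity), and $z'$, being a weak cluster point of $(x_k)_{k=1}^{\infty}$, would equal $z_{\infty}$, contradicting $|\langle w^{*},z'\rangle-\langle w^{*},z_{\infty}\rangle|\geq\varepsilon$. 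Therefore $x_k\rightharpoonup z_{\infty}\in U$, as claimed.

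The step I expect to demand the most attention is the passage from the limiting difference property to the two identities $\ell_q-\ell_p=B(q,p)$ and $\ell_p-\ell_q=B(p,q)$: one must make sure the property is applied only with admissible data (the fixed argument in $\dom(b)$, the varying sequence and its weak limit both in $U$) and that every quantity in sight is finite, so that subtracting the two limits and then adding the resulting equalities is legitimate. The only other point that needs a word is the reflexivity invoked in the final paragraph, which guarantees that bounded sequences have weak cluster points; in every application of this lemma $X$ is a reflexive Banach space, so this causes no difficulty.
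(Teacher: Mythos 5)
Your proposal is correct and follows essentially the same route as the paper's own proof: apply the limiting difference property along a subsequence converging weakly to each of two putative cluster points, use the finiteness of the full-sequence limits $\lim_k B(q,x_k)$ to identify the resulting limits, add the two identities, and invoke nonnegativity plus strict convexity of $b$ to force the cluster points to coincide, finishing with the standard subsequence argument for weak convergence. The only differences are cosmetic (the paper derives $B(q_2,q_1)=-B(q_1,q_2)$ directly rather than summing, and phrases the final step with a weak neighborhood rather than a single functional).
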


The next lemma, which is probably known, generalizes \cite[Proposition 11.14, p. 193]{BauschkeCombettes2017book} from real Hilbert spaces to real normed spaces. The proof appears in the appendix (Section \bref{sec:Appendix}). 
\begin{lem}\label{lem:SuperCoercive}
Suppose that $X\neq \{0\}$ is a real normed space and let $\emptyset\neq S\subseteq X$ be closed, convex and unbounded. Let $u:S\to(-\infty,\infty]$ be defined by $u(x):=v(x)+w(x)$ for each $x\in S$, 
where $v:S\to(-\infty,\infty]$ is convex, proper, and lower semicontinuous, and $w:S\to (-\infty,\infty]$ is supercoercive on $S$. Then $u$ is supercoercive on $S$.
\end{lem}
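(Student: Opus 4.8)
The plan is to reduce the statement to the classical fact that a proper, convex, lower semicontinuous function is minorized by a continuous affine functional, after which the supercoercivity of $w$ immediately forces that of $u$.

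First I would extend $v$ to all of $X$ by setting $v(x):=\infty$ for $x\in X\setminus S$; since $S$ is closed and convex and $v$ is convex, proper and lower semicontinuous on $S$, the extended function, still denoted $v$, is convex, proper and lower semicontinuous on the whole normed space $X$. By the Fenchel--Moreau theorem (or, more elementarily, by a Hahn--Banach separation of the closed convex epigraph of $v$ from a point strictly below it), there exist $x^*\in X^*$ and a constant $c\in\R$ such that $v(x)\geq \langle x^*,x\rangle-c$ for every $x\in X$, and in particular for every $x\in S$. (References for this standard fact include \cite{Zalinescu2002book}.)

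Next, for $x\in S$ with $x\neq 0$ I would use $\langle x^*,x\rangle-c\geq -\|x^*\|\,\|x\|-|c|$ to obtain $v(x)/\|x\|\geq -\|x^*\|-|c|/\|x\|$, whence
\[
\frac{u(x)}{\|x\|}=\frac{v(x)}{\|x\|}+\frac{w(x)}{\|x\|}\;\geq\;\frac{w(x)}{\|x\|}-\|x^*\|-\frac{|c|}{\|x\|}.
\]
Because $S$ is unbounded, the quantity on the right is defined for $x\in S$ of arbitrarily large norm; letting $\|x\|\to\infty$ with $x\in S$, the term $w(x)/\|x\|$ tends to $\infty$ by the supercoercivity of $w$ on $S$, the term $|c|/\|x\|$ tends to $0$, and $\|x^*\|$ is a fixed constant. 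Hence $\lim_{\|x\|\to\infty,\,x\in S}u(x)/\|x\|=\infty$, that is, $u$ is supercoercive on $S$.

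The only ingredient that is not completely routine is the existence of the continuous affine minorant for $v$, and that is a textbook result; the minor point to keep an eye on is that $v$ (and hence $u$) may equal $+\infty$ on part of $S$, but this is harmless, since the displayed lower bound then holds trivially. Accordingly, I do not anticipate any genuine obstacle.
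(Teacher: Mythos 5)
Your proposal is correct and follows essentially the same route as the paper's proof: extend $v$ by $+\infty$ off $S$ to get a proper, convex, lower semicontinuous function on all of $X$, invoke the standard affine-minorant theorem, and then divide by $\|x\|$ and let $\|x\|\to\infty$ in $S$ so that the supercoercive term $w(x)/\|x\|$ dominates the bounded contribution of the affine minorant. No gaps.
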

The next lemma is needed for the formulation of the proximal forward-backward algorithms presented in Section \bref{sec:ISTA-Bregman}. 
\begin{lem}\label{lem:UniqueMinimzerQ} 
Let $X\neq \{0\}$ be a real reflexive Banach space. Suppose that $b:X\to(-\infty,\infty]$ is lower semicontinuous, convex and proper on $X$, that $U:=\Int(\dom(b))\neq\emptyset$, and that $b$ is G\^ateaux differentiable in $U$. Suppose that $S\subseteq \dom(b)$ is  nonempty, closed and convex,  and that $b$ is strictly convex on $S$. Suppose also that $f:\dom(b)\to\R$ is G\^ateaux differentiable in $U$ and that $g:S\to(-\infty,\infty]$ is proper, lower semicontinuous, and convex.  Fix arbitrary $L>0$ and $\mu>0$, let $B$ be the  associated Bregman divergence of $b$, and let $Q_{L,\mu,S}$ be defined in \beqref{eq:Q_L-ISTA}. 
Fix some $y\in U$ and assume that at least one of the following conditions holds:
\begin{enumerate}[(i)]
\item\label{item:SisBounded} $S$ is bounded;
\item\label{item:SisUnboundedRelativeUC} $S$ is unbounded and $b$ is uniformly convex relative to $(S,\{y\})$ with a relative gauge $\psi$ which satisfies $\lim_{t\to\infty}\psi(t)/t=\infty$.
\end{enumerate}
Then the function $u_y:S\to (-\infty,\infty]$ defined by  
\begin{equation}\label{eq:uQ}
u_y(x):=Q_{L,\mu,S}(x,y),\quad x\in S,
\end{equation}
has a unique minimizer $z\in S$ (and $u_y(z)$ is a real number). In particular, if $S\cap U\neq \emptyset$ and $b$ is uniformly convex on $S$, then for each $y\in S\cap U$, the function $u_y$ from \beqref{eq:uQ} has a unique minimizer $z\in S$. 
\end{lem}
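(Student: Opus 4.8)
The plan is to apply the direct method of the calculus of variations to $u_y$ on the reflexive space $X$ and then to read off uniqueness from strict convexity. First I would check that $u_y$ is proper, convex and lower semicontinuous on $S$. Since $y\in U$ and $b$ is finite on $\dom(b)\supseteq S$, the map $x\mapsto B(x,y)=b(x)-b(y)-\langle b'(y),x-y\rangle$ (see \beqref{eq:BregmanDistance}) is real-valued, convex and lower semicontinuous on $S$ (it is $b$ plus a continuous affine functional) and, because $b$ is strictly convex on $S$, it is even strictly convex on $S$. The affine term $f(y)+\langle f'(y),\cdot-y\rangle$ is continuous and convex, and $g$ is proper, convex and lower semicontinuous on $S$ by hypothesis; hence $u_y$ from \beqref{eq:uQ}--\beqref{eq:Q_L-ISTA} is convex, strictly convex on $S$, lower semicontinuous (there is no $\infty-\infty$ ambiguity, the only possibly infinite summand being $g$) and proper ($u_y>-\infty$ on $S$, and $u_y$ is finite wherever $g$ is). Being convex and lower semicontinuous, $u_y$ is therefore weakly lower semicontinuous.

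Next I would show that any minimizing sequence $(x_n)_n\subseteq S$, i.e.\ $u_y(x_n)\downarrow\inf_S u_y<\infty$, is bounded. Under hypothesis \eqref{item:SisBounded} this is immediate since $S$ is bounded. Under hypothesis \eqref{item:SisUnboundedRelativeUC}, \cite[Proposition 4.13(I)]{ReemReichDePierro2019jour(BregmanEntropy)}, applied to the pair $(S,\{y\})$ and the relative gauge $\psi$, gives $B(x,y)\geq\psi(\|x-y\|)$ for every $x\in S$; since $\lim_{t\to\infty}\psi(t)/t=\infty$ and $y$ is fixed, $w(x):=\tfrac{L}{\mu}B(x,y)$ is supercoercive on $S$. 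Writing $u_y=v+w$ with $v(x):=f(y)+\langle f'(y),x-y\rangle+g(x)$ convex, proper and lower semicontinuous on the unbounded set $S$, Lemma \bref{lem:SuperCoercive} yields that $u_y$ is supercoercive, hence coercive, on $S$; combined with $\sup_n u_y(x_n)<\infty$ this forces $(x_n)$ to be bounded.

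In either case, reflexivity lets me pass to a subsequence $x_{n_j}\rightharpoonup z$; since $S$ is closed and convex it is weakly closed, so $z\in S$, and weak lower semicontinuity gives $u_y(z)\leq\liminf_j u_y(x_{n_j})=\inf_S u_y$, so $z$ minimizes $u_y$, and properness makes $u_y(z)$ a real number (cf.\ \cite[Corollary 3.23, p. 71]{Brezis2011book}); strict convexity of $u_y$ on $S$ then gives uniqueness of $z$. For the final assertion, note that $b$ uniformly convex on $S$ (that is, relative to $(S,S)$ in the sense of Definition \bref{def:TypesOfConvexity}) is strictly convex on $S$ — otherwise its relative gauge would vanish at a positive argument — so the standing hypotheses of the lemma hold, and $b$ is then also uniformly convex relative to $(S,\{y\})$. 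If $S$ is bounded we are in case \eqref{item:SisBounded}; if $S$ is unbounded, I would argue that the optimal modulus is superlinear, so that case \eqref{item:SisUnboundedRelativeUC} applies: writing the defining inequality of uniform convexity at the midpoints of a chain of equally spaced points along an arbitrarily long segment contained in $S$ (such segments exist by convexity and unboundedness) and telescoping shows that the one-dimensional restriction of $b$ to that segment has difference quotients of its derivative growing at least linearly, whence its modulus grows at least quadratically; this superlinearity can also be extracted from \cite{ReemReichDePierro2019jour(BregmanEntropy)}.

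The only genuinely delicate point is the coercivity of $u_y$ in the unbounded case: because $v$ carries the linear term $\langle f'(y),\cdot-y\rangle$ (and $g$, being convex, is itself minorized only by an affine functional), one really needs the Bregman part to grow \emph{strictly} faster than linearly, which is exactly why hypothesis \eqref{item:SisUnboundedRelativeUC} is phrased with $\psi(t)/t\to\infty$ and why Lemma \bref{lem:SuperCoercive} is invoked rather than a bare coercivity argument. Everything else is the standard package: strict convexity $\Rightarrow$ at most one minimizer; convex $+$ lower semicontinuous $\Rightarrow$ weakly lower semicontinuous; and coercive (or defined on a bounded, closed, convex set) $+$ weakly lower semicontinuous on a reflexive space $\Rightarrow$ the infimum is attained.
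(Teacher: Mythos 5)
Your proposal is correct and follows essentially the same route as the paper's proof: weak lower semicontinuity of the convex, lsc function $u_y$, weak compactness of $S$ in case (i), supercoercivity via \cite[Proposition 4.13(I)]{ReemReichDePierro2019jour(BregmanEntropy)} and Lemma \bref{lem:SuperCoercive} in case (ii), strict convexity for uniqueness, and reduction of the final assertion to case (i) or (ii). The only cosmetic difference is at the very end, where the paper simply invokes \cite[Lemma 3.3]{ReemReichDePierro2019jour(BregmanEntropy)} for the superlinearity of the modulus $\psi_{b,S}$ on an unbounded $S$, while you sketch a telescoping argument for it (and also defer to that reference).
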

\begin{proof}
We start by showing the existence of a minimizer. Since $u_y$ is convex and lower semicontinuous (because it is a sum of convex and lower semicontinuous functions), it is lower semicontinuous with respect to the weak topology \cite[Corollary 3.9, p. 61]{Brezis2011book}. Suppose first that Condition \beqref{item:SisBounded} holds, namely $S$ is bounded. Then the existence of a minimizer follows from  \cite[p. 11]{Brezis2011book} since $S$ is bounded, closed and convex, and hence compact with respect to the weak  topology (because $X$ is reflexive \cite[Corollary 3.22, p. 71]{Brezis2011book}), and $u_y$ is lower semicontinuous with respect to  the weak topology. 

Consider now the case of Condition \beqref{item:SisUnboundedRelativeUC}. Since $S\subseteq \dom(b)$ and $y\in U$, the assumed uniform convexity of $b$ relative to $(S,\{y\})$ with a relative gauge $\psi$ implies, according to \cite[Proposition 4.13(I)]{ReemReichDePierro2019jour(BregmanEntropy)}, that $B(x,y)\geq \psi(\|x-y\|)$ for all $x\in S$. Since we assume that $\lim_{t\to\infty}\psi(t)/t=\infty$ and that $S$ is unbounded, we have
\begin{equation*}
\frac{B(x,y)}{\|x\|}\geq \frac{\psi(\|x-y\|)}{\|x-y\|}\cdot\frac{\|x-y\|}{\|x\|}\xrightarrow[\|x\|\to \infty,x\in S]{} \infty.
\end{equation*}
This fact, \beqref{eq:Q_L-ISTA}, \beqref{eq:uQ} and Lemma \bref{lem:SuperCoercive} imply that $\lim_{\|x\|\to\infty, x\in S}u_y(x)/\|x\|=\infty$ and hence, in particular,  $\lim_{\|x\|\to\infty, x\in S}u_y(x)=\infty$. Therefore we can use  \cite[Corollary 3.23, p. 71]{Brezis2011book} to conclude that $u_y$ has a minimizer $z\in S$. To see that $u_y(z)$ is a real number, one observes that  the ranges of $B$, $f$ and $g$ do not include $-\infty$ and hence $-\infty<u_y(z)$. In addition, since $g$ is proper, there is $x\in S$ satisfying  $g(x)<\infty$. This fact, as well as \beqref{eq:Q_L-ISTA}, \beqref{eq:uQ}, the fact that  $S\subseteq \dom(b)$, and the fact that $z$ is a minimizer of $u_y$, imply that $u_y(z)\leq u_y(x)<\infty$. 

We now turn to proving the uniqueness of the minimizer.Suppose to the contrary that $u_y$ attains its minimum at two different points $x_1,x_2\in S$. Since $u_y$ is a sum of convex functions and a function which is strictly convex on $S$ (since we assume that $b$ is strictly convex on $S$), it is strictly convex on $S$. This fact and the equality $u_y(x_1)=u_y(x_2)$ imply that $u_y(0.5(x_1+x_2))<0.5u_y(x_1)+0.5u_y(x_2)=u_y(x_1)$, a contradiction to the assumption that $u_y(x_1)$ is the smallest value of $u_y$ on $S$. 

Finally, assume that $S\cap U\neq \emptyset$ and that $b$ is uniformly convex on $S$. If $S$ is bounded, then we are in the case of Condition \beqref{item:SisBounded} and the assertion follows from previous paragraphs. Assume now that $S$ is unbounded. We observe that the modulus of convexity $\psi_{b,S}$ of $b$ is a relative gauge on $(S,\{y\})$ for all $y\in S\cap U$ (this is true even if $S$ is bounded). Now fix some $y\in S\cap U$. Since $S$ is unbounded, we can apply \cite[Lemma 3.3]{ReemReichDePierro2019jour(BregmanEntropy)} which implies that $\lim_{t\to\infty}\psi_{b,S}(t)/t=\infty$. Thus we can conclude from previous paragraphs that $u_y$ has a unique minimizer on $S$. 
\end{proof}

The next lemma generalizes \cite[Lemma 5.2]{ReemDe-Pierro2017jour} to the setting of Bregman divergences (\cite[Lemma 5.2]{ReemDe-Pierro2017jour} by itself extends \cite[Lemma 2.2]{BeckTeboulle2009jour}).

\begin{lem}\label{lem:Optimality}
Consider the setting of Lemma \bref{lem:UniqueMinimzerQ} and let $\tilde{g}:X\to(-\infty,\infty]$ be defined by $\tilde{g}(x):=g(x)$ whenever $x\in S$ and by $\tilde{g}(x):=\infty$ whenever $x\notin S$. Assume that  $S\cap U\neq\emptyset$ and that there exists a point in $\dom(g)$ at which either $b$ or $g$ are continuous. Then an element $z\in S\cap U$ is a  minimizer of $u_y$ in $S$ if and only if there exists $\gamma\in \partial \tilde{g}(z)$ such that 
\begin{equation}\label{eq:z_gamma}
f'(y)+\gamma=\frac{L}{\mu}(b'(y)-b'(z)).
\end{equation}
\end{lem}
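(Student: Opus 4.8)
The plan is to recast the constrained minimization of $u_y$ over $S$ as an \emph{unconstrained} minimization over $X$, and then to apply Fermat's rule together with the Moreau--Rockafellar subdifferential sum rule. Define $\varphi:X\to(-\infty,\infty]$ by $\varphi(x):=f(y)+\langle f'(y),x-y\rangle+\frac{L}{\mu}B(x,y)$ for $x\in\dom(b)$ and $\varphi(x):=\infty$ otherwise. From \beqref{eq:BregmanDistance} we have $\varphi=\frac{L}{\mu}b+\ell$ on $\dom(b)$, where $\ell$ is a continuous affine functional (its linear part being $f'(y)-\frac{L}{\mu}b'(y)\in X^*$); hence $\varphi$ is proper, convex and lower semicontinuous (because $b$ is), $\dom(\varphi)=\dom(b)$, and $\varphi$ is G\^ateaux differentiable on $U$ with $\varphi'(x)=f'(y)+\frac{L}{\mu}(b'(x)-b'(y))$. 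Being convex and G\^ateaux differentiable at each $x\in U$, $\varphi$ has there the singleton subdifferential $\partial\varphi(x)=\{\varphi'(x)\}$. By \beqref{eq:Q_L-ISTA} one has $u_y=\varphi+g$ on $S$, so the function $\tilde u_y:=\varphi+\tilde g:X\to(-\infty,\infty]$ coincides with $u_y$ on $S$ and equals $\infty$ off $S$ (here $S\subseteq\dom(b)$ is used). Consequently $z\in S$ minimizes $u_y$ over $S$ if and only if $z$ is a global minimizer of $\tilde u_y$, i.e. if and only if $0\in\partial\tilde u_y(z)=\partial(\varphi+\tilde g)(z)$.

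The implication ``$\Leftarrow$'' is then immediate: if \beqref{eq:z_gamma} holds with some $\gamma\in\partial\tilde g(z)$, rearranging gives $0=f'(y)+\frac{L}{\mu}(b'(z)-b'(y))+\gamma=\varphi'(z)+\gamma\in\partial\varphi(z)+\partial\tilde g(z)\subseteq\partial(\varphi+\tilde g)(z)$, the last inclusion being the always valid ``easy'' half of the sum rule; hence $z$ is a global minimizer of $\tilde u_y$ and thus a minimizer of $u_y$ over $S$. For ``$\Rightarrow$'', if $z\in S\cap U$ minimizes $u_y$ over $S$, then $0\in\partial(\varphi+\tilde g)(z)$, and invoking the full Moreau--Rockafellar sum rule, $0\in\partial\varphi(z)+\partial\tilde g(z)=\{f'(y)+\frac{L}{\mu}(b'(z)-b'(y))\}+\partial\tilde g(z)$; so there is $\gamma\in\partial\tilde g(z)$ with $f'(y)+\frac{L}{\mu}(b'(z)-b'(y))+\gamma=0$, which rearranges to \beqref{eq:z_gamma}.

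I expect the only delicate point to be the constraint qualification justifying the equality $\partial(\varphi+\tilde g)(z)=\partial\varphi(z)+\partial\tilde g(z)$ in the ``$\Rightarrow$'' direction; this is exactly where the hypothesis that $b$ or $g$ is continuous at some point of $\dom(g)$ enters (a feature absent in the classical Hilbertian case \cite[Lemma 2.2]{BeckTeboulle2009jour}, where the regularizing term is a globally continuous quadratic). Since $\dom(g)\subseteq S\subseteq\dom(b)$, any such point lies in $\dom(\varphi)\cap\dom(\tilde g)$, and continuity of $b$ there is the same as continuity of $\varphi$ there; alternatively, one may recall that a proper lower semicontinuous convex function on a Banach space is continuous on the interior of its effective domain, so that $\varphi$ is in fact automatically continuous at $z\in U$. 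Either way the classical Moreau--Rockafellar theorem (see, e.g., \cite{Zalinescu2002book}) applies. A small additional point is that $z\in\dom(\tilde g)$ is needed: in the ``$\Rightarrow$'' direction this follows because $g$ is proper on $S$, so picking $x\in S$ with $g(x)\in\R$ gives $u_y(z)\le u_y(x)<\infty$ and hence $g(z)\in\R$; in the ``$\Leftarrow$'' direction it is built into $\gamma\in\partial\tilde g(z)$. The algebra producing \beqref{eq:z_gamma} from $0\in\partial\varphi(z)+\partial\tilde g(z)$, as well as the verification that $\varphi$ is lower semicontinuous and has a singleton subdifferential on $U$, are routine.
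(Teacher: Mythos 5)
Your proposal is correct and follows essentially the same route as the paper's proof: extend $u_y$ by $+\infty$ off $S$, apply Fermat's rule, and split off the $g$-part so that the Moreau--Rockafellar sum rule (with the continuity hypothesis as constraint qualification) yields \beqref{eq:z_gamma}. The only cosmetic difference is that you attach the indicator of $S$ solely to $\tilde g$ and let your $\varphi$ live on all of $\dom(b)$, whereas the paper's $\tilde v_y$ also carries the indicator of $S$; your variant has the minor advantage that continuity of $\varphi$ at $z\in U$ is automatic, as you note.
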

\begin{proof}
Let $\tilde{u}_y:X\to(-\infty,\infty]$ be defined by $\tilde{u}_y(x):=u_y(x)$ when $x\in S$ and  $\tilde{u}_y(x):=\infty$ otherwise, where $u_y$ is defined in \beqref{eq:uQ}. By combining the definition of $\tilde{u}_y$, \beqref{eq:Q_L-ISTA}, \beqref{eq:uQ}, the convexity and closedness of $S\neq\emptyset$, and the fact that $g$ is proper on $S$ and $b$ is finite on $S$, it follows that $\tilde{u}_y$ is proper, convex and lower semicontinuous. Since we assume the setting of Lemma \bref{lem:UniqueMinimzerQ}, we know by this lemma  that $u_y$ has a unique minimizer $z_{L,\mu,S,y}\in S$ and $u_y(z_{L,\mu,S,y})\in \R$. Since $\tilde{u}_y(x)=\infty$ whenever $x\notin S$, we conclude that $z_{L,\mu,S,y}$ is the unique global minimizer of $\tilde{u}_y$ on $X$.  Since $z_{L,\mu,S,y}$ is a minimizer of $\tilde{u}_y$, it follows from Fermat's rule (namely the very simple necessary and sufficient condition for minimality \cite[p. 96]{VanTiel1984book}) that $0\in \partial \tilde{u}_y(z_{L,\mu,S,y})$.

Consider the function $\tilde{v}_y:X\to(-\infty,\infty]$ which is defined for all $x\in S$ by $\tilde{v}_y(x):=f(y)+\langle  f'(y),x-y\rangle+(L/\mu)B(x,y)$ and by $\tilde{v}_y(x):=\infty$ otherwise. From this definition and because of the assumptions on $b$, it follows that $\tilde{v}_y$ is proper and convex (on $X$) and G\^ateaux differentiable in $S\cap U$ (it is also lower semicontinuous on $X$, but we do not need this property). Hence from  \cite[Theorem 5.37, p. 77]{VanTiel1984book} it follows that  $\partial \tilde{v}_y(x)=\{\tilde{v}_y'(x)\}$ for each $x\in S\cap U$. This equality and  \beqref{eq:BregmanDistance} imply that 
\begin{equation}\label{eq:v_tilde_z}
\partial \tilde{v}_y(x)=\{\tilde{v}_y'(x)\}=\{f'(y)+(L/\mu)(b'(x)-b'(y))\}, \quad\forall x\in S\cap U. 
\end{equation}
 Assume now that an element $z\in S\cap U$ satisfies $z=z_{L,\mu,S,y}$.  Since $\tilde{u}_y=\tilde{v}_y+\tilde{g}$, it follows that $\tilde{u}_y$ is the sum of two convex and proper functions. Since, according to the assumption in the formulation of this lemma, there exists a point in $\dom(g)$ at which either $b$ or $g$ are continuous, and since $\dom(g)\subseteq S\subseteq \dom(b)$ and hence $\dom(b)\cap \dom(g)=\dom(g)$, it follows that either $b$ or $g$ are continuous at some point in $\dom(b)\cap \dom(g)$. Since we already know that $0\in \partial \tilde{u}_y(z_{L,\mu,S,y})$, we conclude from the sum rule \cite[Theorem 5.38, p. 77]{VanTiel1984book} and its proof  \cite[Theorem 5.38, pp.  78-79]{VanTiel1984book} that $\partial \tilde{g}(z)\neq \emptyset$ and 
\begin{equation}\label{eq:SumRule_uvg}
0\in \partial \tilde{u}_y(z)=\partial \tilde{v}_y(z)+\partial \tilde{g}(z). 
\end{equation}
By combining this inclusion with the inclusion $z\in S\cap U$ and with \beqref{eq:v_tilde_z}, we obtain \beqref{eq:z_gamma} for some $\gamma\in \partial\tilde{g}(z)$. On the other hand, if \beqref{eq:z_gamma} holds for some $z\in S\cap U$ and  $\gamma\in \partial \tilde{g}(z)$, then (obviously) $\partial \tilde{g}(z)\neq \emptyset$. Since $z\in S\cap U$, we know from  the lines which precede \beqref{eq:v_tilde_z} that $\partial \tilde{v}_y(z)=\{\tilde{v}_y'(z)\}$. This fact, the equality $\tilde{u}_y(x)=\tilde{v}_y(x)+\tilde{g}(x)$ for all $x\in X$, the fact that $\tilde{v}_y$ and $\tilde{g}$ are proper and convex, and the sum rule  \cite[Theorem 5.38, p.  77]{VanTiel1984book}, all of these facts  show that  $\partial \tilde{v}_y(z)+\partial \tilde{g}(z)\in \partial \tilde{u}_y(z)$. By combining this inclusion with \beqref{eq:v_tilde_z} (in which we take $x:=z$) and with \beqref{eq:z_gamma}, we have $0\in \partial \tilde{u}_y(z)$. From Fermat's rule it follows that $z$ is a global minimizer of $\tilde{u}_y$ on $X$. Since $z\in S$ and since $\tilde{u}_y$ coincides with $u_y$ on $S$, it follows that $z$ is a minimizer of $u_y$ on $S$, as required. As a matter of fact, Lemma \bref{lem:UniqueMinimzerQ} implies that  $z$ coincides with $z_{L,\mu,S,y}$ because $z_{L,\mu,S,y}$  is the unique minimizer of $u_y$  on $S$. 
\end{proof}

\begin{remark}\label{rem:SufficientConditions dom(g)dom(b)}
A crucial assumption in Lemma \bref{lem:Optimality} is the existence of a point in $\dom(g)$ at which either $b$ or $g$ are continuous. 
Hence it is of interest to present several rather practical sufficient conditions which ensure the existence of such a point. As we show below, each one of the following conditions achieves this goal: 
\begin{enumerate}[(i)]
\item\label{cond:g is cont on dom(g)} $g$ is continuous on $\dom(g)$,
\item\label{cond:b is cont on dom(g)} $b$ is continuous on $\dom(g)$,
\item\label{cond:b is cont on dom(b)} $b$ is continuous on $\dom(b)$,
\item\label{cond:dom(g)U} $\dom(g)\cap U\neq\emptyset$,
\item\label{cond:SU} $S\subseteq U$.
\end{enumerate}
Indeed, Conditions \beqref{cond:g is cont on dom(g)}--\beqref{cond:b is cont on dom(g)}  immediately imply  the required assumption and Condition \beqref{cond:b is cont on dom(b)} is a particular case of Condition \beqref{cond:b is cont on dom(g)} because $\dom(g)\subseteq S\subseteq \dom(b)$.  Assume now that Condition \beqref{cond:dom(g)U} holds. Since we assume that $b$ is lower semicontinuous and convex, that $X$ is a Banach space, and that $U:=\Int(\dom(b))\neq \emptyset$, we can apply \cite[Proposition 3.3, p. 39]{Phelps1993book_prep} to conclude that $b$ is continuous on $U$. In particular, $b$ is continuous at the point in $U$ which belongs to $\dom(g)$, namely the assertion follows. Condition \beqref{cond:SU} is just a particular case of Condition \beqref{cond:dom(g)U} because $\dom(g)\subseteq S$ and $g$ is proper on $S$. 
\end{remark}

The following lemma generalizes \cite[Lemma 2.3]{BeckTeboulle2009jour} (finite-dimensional Euclidean spaces) and \cite[Lemma 3.1]{ReemDe-Pierro2017jour} (real Hilbert spaces) to our setting of real reflexive Banach spaces and Bregman divergences. 
\begin{lem}\label{lem:FB_k}
Consider the setting of Lemma \bref{lem:Optimality} and let  $F(x):=f(x)+g(x)$ for each  $x\in S$. Suppose that for some $y\in U$, the minimizer $z$ of $u_y$ (from \beqref{eq:uQ})  satisfies $z\in S\cap U$ and 
\begin{equation}\label{eq:F<Q} 
F(z)\leq Q_{L,\mu,S}(z,y). 
\end{equation}
Then for all $x\in S$, 
\begin{equation}\label{eq:FB_k}
F(x)-F(z)\geq \frac{L}{\mu}\left(B(x,z)-B(x,y)\right).
\end{equation}
\end{lem}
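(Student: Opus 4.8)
The plan is to mimic the classical Beck--Teboulle argument (\cite[Lemma 2.3]{BeckTeboulle2009jour}), replacing the quadratic regularizer by the Bregman divergence $B$ and using the optimality characterization of the proximal point $z$ supplied by Lemma \bref{lem:Optimality}. First I would record three elementary ingredients valid for every $x\in S$. (a) Since $z\in S\cap U$ is the minimizer of $u_y$, Lemma \bref{lem:Optimality} provides some $\gamma\in\partial\tilde g(z)$ with $f'(y)+\gamma=\frac{L}{\mu}(b'(y)-b'(z))$. (b) Convexity of $f$ on $\dom(b)$ together with its G\^ateaux differentiability at $y\in U$ gives $f(x)\geq f(y)+\langle f'(y),x-y\rangle$, since $x\in S\subseteq\dom(b)$. (c) From $\gamma\in\partial\tilde g(z)$ and $\tilde g=g$ on $S$ we get $g(x)\geq g(z)+\langle\gamma,x-z\rangle$.

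Adding (b) and (c), then invoking the hypothesis \beqref{eq:F<Q} and the definition \beqref{eq:Q_L-ISTA} of $Q_{L,\mu,S}(z,y)$, and cancelling the common terms $f(y)$ and $g(z)$, I obtain for every $x\in S$
\begin{equation*}
F(x)-F(z)\;\geq\; F(x)-Q_{L,\mu,S}(z,y)\;\geq\;\langle f'(y)+\gamma,\,x-z\rangle-\frac{L}{\mu}B(z,y).
\end{equation*}
(Here $Q_{L,\mu,S}(z,y)\in\R$ by Lemma \bref{lem:UniqueMinimzerQ}, so $u_y(z)\in\R$ and hence $F(z)\in\R$, which makes the left-hand side meaningful; if $F(x)=\infty$ the desired inequality \beqref{eq:FB_k} is trivial because its right-hand side is finite, since $x\in S\subseteq\dom(b)$ and $y,z\in U$.) Now substitute ingredient (a) to replace $f'(y)+\gamma$ by $\frac{L}{\mu}(b'(y)-b'(z))$, so the right-hand side becomes $\frac{L}{\mu}\bigl(\langle b'(y)-b'(z),\,x-z\rangle-B(z,y)\bigr)$. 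Finally apply the three-point identity $B(x,y)=B(x,z)+B(z,y)+\langle b'(z)-b'(y),\,x-z\rangle$, which follows by a direct computation from \beqref{eq:BregmanDistance} and is legitimate here because all three divergences are finite; rearranged, it reads $\langle b'(y)-b'(z),\,x-z\rangle-B(z,y)=B(x,z)-B(x,y)$, and plugging this in yields exactly \beqref{eq:FB_k}.

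I do not anticipate a genuine obstacle: the proof is a short chain of inequalities. The points requiring care are verifying that the hypotheses of Lemma \bref{lem:Optimality} are in force (they are inherited from ``the setting of Lemma \bref{lem:Optimality}'', in particular $S\cap U\neq\emptyset$ and the existence of a point of $\dom(g)$ where $b$ or $g$ is continuous), ensuring $F(z)$ is real via $u_y(z)\in\R$, and noting that $z\in S\cap U$ is precisely what makes $b'(z)$ available so that both the optimality condition and the three-point identity may be used. The subgradient inequalities in (b) and (c) and the three-point identity are the only tools needed beyond the earlier lemmas.
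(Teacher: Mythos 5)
Your proposal is correct and follows essentially the same route as the paper's proof: the subgradient inequalities for $f$ and $g$, the optimality characterization $f'(y)+\gamma=\frac{L}{\mu}(b'(y)-b'(z))$ from Lemma \bref{lem:Optimality}, the hypothesis \beqref{eq:F<Q}, and then the Bregman algebra (which the paper carries out inline rather than quoting the three-point identity by name). The side remarks on $F(z)\in\R$ and on the trivial case $F(x)=\infty$ match the paper's treatment as well.
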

\begin{proof}
Since $u_y(z)=Q_{L,\mu,S}(z,y)\in\R$ (according to Lemma \bref{lem:Optimality}), since the ranges of $f$ and $g$ do not include $-\infty$ and since \beqref{eq:F<Q} holds, we have $F(z)\in\R$. Let $\tilde{f}:X\to (-\infty,\infty]$ be the function defined by $\tilde{f}(x):=f(x)$ if $x\in \dom(b)$ and $\tilde{f}(x):=\infty$ otherwise. The convexity of $\dom(b)$ and of $f$ on $\dom(b)$ imply that $\tilde{f}$ is convex on $X$. Since $f$ is G\^ateaux differentiable in the open subset $U$ and since $\tilde{f}(x)=f(x)$ for each $x\in U$, the assumption that $y\in U$ and \cite[Theorem 5.37, p. 77]{VanTiel1984book} imply that $\{f'(y)\}=\{\tilde{f}'(y)\}=\partial \tilde{f}(y)$. Let $\gamma$ be the vector in $\partial\tilde{g}(z)$ which satisfies \beqref{eq:z_gamma}, the existence of which is ensured by Lemma  \bref{lem:Optimality}. Since $S\subseteq \dom(b)$, the subgradient inequality and the equalities $\tilde{f}(x)=f(x)$ and $\tilde{g}(x)=g(x)$ for all $x\in S$, imply that 
\begin{equation}\label{eq:f_subgrad}
f(x)\geq f(y)+\langle f'(y),x-y\rangle,\quad  x\in S
\end{equation}
\quad\quad\quad\quad\quad\quad and
\begin{equation}\label{eq:g_subgrad}
g(x)\geq g(z)+\langle \gamma,x-z\rangle,\quad x\in S.
\end{equation}
It follows from the equality $F(x)=f(x)+g(x)$ for every $x\in S$, from \beqref{eq:F<Q}, \beqref{eq:f_subgrad}, \beqref{eq:g_subgrad}, from 
\beqref{eq:Q_L-ISTA} with $z$ instead of $x$, from Lemma \bref{lem:Optimality}, and from  \beqref{eq:BregmanDistance}, that for all $x\in S$, we have
\begin{multline*}
F(x)-F(z)\geq (f(x)+g(x))-Q_{L,\mu,S}(z,y)\\
\geq f(y)+\langle f'(y),x-y\rangle+g(z)+\langle \gamma,x-z\rangle-Q_{L,\mu,S}(z,y)\\
=\langle f'(y)+\gamma,x-z\rangle-(L/\mu)B(z,y)\\
=\langle (L/\mu)(b'(y)-b'(z)),x-z\rangle-(L/\mu)B(z,y)\\
=\langle (L/\mu)b'(y),x-z\rangle-\langle (L/\mu)b'(z),x-z\rangle 
-(L/\mu)\big(b(z)-b(y)-\langle b'(y),z-y\rangle\big)\\
=(L/\mu)\big(\langle b'(y),x-y\rangle +b(y)\big)-(L/\mu)\big(\langle b'(z), x-z\rangle+b(z)\big)\\
=(L/\mu)(b(x)-B(x,y))-(L/\mu)(b(x)-B(x,z))=(L/\mu)(B(x,z)-B(x,y)),
\end{multline*}
as claimed.
\end{proof}

The next lemma is  needed in order to ensure simple sufficient conditions for the  minimizer of $u_y$ from \beqref{eq:uQ} to belong to $S\cap U$. These conditions are useful for Assumption \bref{assum:pInU}.

\begin{lem}\label{lem:zInSU}
Consider the setting of Lemma \bref{lem:Optimality} and suppose in addition that at least one of the following conditions holds:
\begin{enumerate}[(A)]
\item\label{item:SU} $S\subseteq U$, 
\item\label{item:NonempetySubdif} For each $x\in S\cap \partial U$, either 
$\partial b(x)=\emptyset$ or $\partial \tilde{g}(x)=\emptyset$, where $\partial U$ is the boundary of $U$ and $\tilde{g}:X\to (-\infty,\infty]$ is defined as $\tilde{g}(x):=g(x)$ if $x\in S$ and $\tilde{g}(x):=\infty$ otherwise.
\end{enumerate}
Then the minimizer $z$ of $u_y$ (from \beqref{eq:uQ}) satisfies $z\in S\cap U$. In particular, if the assumptions mentioned in Lemma \bref{lem:Optimality} hold, $X$ is finite-dimensional and $b$ is essentially smooth on $U$, then $z\in S\cap U$. 
\end{lem}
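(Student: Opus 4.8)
The plan is to argue by contradiction, running the subdifferential sum rule twice. By Lemma~\bref{lem:UniqueMinimzerQ} the function $u_y$ has a minimizer $z\in S$; extend it to $\tilde u_y:X\to(-\infty,\infty]$ by setting $\tilde u_y(x):=u_y(x)$ for $x\in S$ and $\tilde u_y(x):=\infty$ otherwise. Exactly as in the proof of Lemma~\bref{lem:Optimality}, $\tilde u_y$ is proper, convex and lower semicontinuous, and $z$ is its unique global minimizer, so Fermat's rule gives $0\in\partial\tilde u_y(z)$. If Condition~\beqref{item:SU} holds there is nothing to prove, since then $z\in S\subseteq U$. So assume Condition~\beqref{item:NonempetySubdif}. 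Because $z\in S\subseteq\dom(b)$ and $U=\Int(\dom(b))$, it is enough to rule out $z\in\partial U$; suppose then, for contradiction, that $z\in S\cap\partial U$.

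Now I would split subdifferentials twice. First, write $\tilde u_y=\tilde v_y+\tilde g$ as in Lemma~\bref{lem:Optimality}, with $\tilde v_y(x):=f(y)+\langle f'(y),x-y\rangle+(L/\mu)B(x,y)$ on $S$ and $\tilde v_y(x):=\infty$ off $S$. The standing hypothesis of Lemma~\bref{lem:Optimality} (existence of a point of $\dom(g)=\dom(b)\cap\dom(g)$ at which $b$ or $g$ is continuous) makes \cite[Theorem 5.38, p. 77]{VanTiel1984book} applicable, so $0\in\partial\tilde u_y(z)=\partial\tilde v_y(z)+\partial\tilde g(z)$, whence both $\partial\tilde v_y(z)\neq\emptyset$ and $\partial\tilde g(z)\neq\emptyset$. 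Next, using \beqref{eq:BregmanDistance} and $S\subseteq\dom(b)$, the function $\tilde v_y$ is a globally defined continuous affine function plus $(L/\mu)(b+\iota_S)$, where $\iota_S$ is the indicator function of $S$; adding a continuous affine term and scaling by $L/\mu>0$ only translate and rescale the subdifferential, so $\partial(b+\iota_S)(z)\neq\emptyset$. Since $b$ is lower semicontinuous and convex with $\Int(\dom(b))=U\neq\emptyset$, it is continuous on $U$ \cite[Proposition 3.3, p. 39]{Phelps1993book_prep}, and $S\cap U\neq\emptyset$; hence the sum rule applies once more to $b+\iota_S$, giving $\partial(b+\iota_S)(z)=\partial b(z)+\partial\iota_S(z)$, and because a Minkowski sum of subsets is nonempty only if both summands are, $\partial b(z)\neq\emptyset$. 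Thus both $\partial b(z)\neq\emptyset$ and $\partial\tilde g(z)\neq\emptyset$ with $z\in S\cap\partial U$, contradicting Condition~\beqref{item:NonempetySubdif}. Therefore $z\notin\partial U$, so $z\in U$, i.e. $z\in S\cap U$.

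For the ``in particular'' assertion I would derive Condition~\beqref{item:NonempetySubdif} from essential smoothness. When $X$ is finite-dimensional and $\Int(\dom(b))=U\neq\emptyset$ one has $\partial U=\partial(\dom(b))$, so every $x\in S\cap\partial U$ lies in $\dom(b)\setminus\Int(\dom(b))$; a classical property of essentially smooth convex functions on $\R^n$ is that their subdifferential is empty outside the interior of the domain (see, e.g., \cite[Section 26]{Rockafellar1970book}), so $\partial b(x)=\emptyset$ for every such $x$, and Condition~\beqref{item:NonempetySubdif} holds; the conclusion then follows from the first part. The main obstacle I foresee is the bookkeeping around the two sum-rule applications: verifying that the continuity/qualification hypotheses are genuinely met at each stage (the one for $\tilde u_y=\tilde v_y+\tilde g$ supplied by the hypothesis inherited from Lemma~\bref{lem:Optimality}, the one for $b+\iota_S$ supplied by continuity of $b$ on $U$ together with $S\cap U\neq\emptyset$) and correctly transferring nonemptiness of $\partial\tilde v_y(z)$ to nonemptiness of $\partial b(z)$ through the indicator function; the remaining steps, and the cited fact about essentially smooth functions in the finite-dimensional case, are routine.
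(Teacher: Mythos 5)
Your proof is correct and reaches the contradiction by the same overall mechanism as the paper (Fermat's rule plus the sum rule forcing both $\partial b(z)\neq\emptyset$ and $\partial\tilde{g}(z)\neq\emptyset$ at a hypothetical boundary minimizer), but the decomposition is genuinely different. The paper splits $\tilde{u}_y$ once, into $\tilde{u}_{1,y}(x):=f(y)+\langle f'(y),x-y\rangle-(L/\mu)\bigl(b(y)+\langle b'(y),x-y\rangle\bigr)+\tilde{g}(x)$ and $\tilde{u}_{2,y}(x):=(L/\mu)b(x)$; since $\tilde{u}_{2,y}$ is the \emph{unrestricted} $b$ (with $\dom(\tilde{u}_{2,y})=\dom(b)$, the indicator of $S$ being carried entirely by $\tilde{g}$ inside $\tilde{u}_{1,y}$), a single application of the sum rule yields $\partial b(z)=(\mu/L)\partial\tilde{u}_{2,y}(z)\neq\emptyset$ and $\partial\tilde{g}(z)\neq\emptyset$ simultaneously, with the qualification point supplied verbatim by the hypothesis of Lemma \bref{lem:Optimality}. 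You instead reuse the splitting $\tilde{u}_y=\tilde{v}_y+\tilde{g}$ already set up in Lemma \bref{lem:Optimality} (so the first sum-rule step comes for free) and then run the sum rule a second time on $b+\iota_S$ to extract $\partial b(z)\neq\emptyset$ from $\partial\tilde{v}_y(z)\neq\emptyset$; the extra qualification you need there --- continuity of $b$ at some point of $S$, via $S\cap U\neq\emptyset$ and \cite[Proposition 3.3, p. 39]{Phelps1993book_prep} --- is genuinely available (it is the same fact the paper records in Remark \bref{rem:SufficientConditions dom(g)dom(b)}), so the step goes through. The trade-off: the paper's grouping is more economical (one sum rule, one qualification check), while yours is more modular in that it leans on the decomposition and conclusions already established in Lemma \bref{lem:Optimality}. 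Your treatment of the finite-dimensional ``in particular'' clause via the emptiness of $\partial b$ outside $\Int(\dom(b))$ for essentially smooth functions matches the paper's use of \cite[Theorem 26.1]{Rockafellar1970book}, and your final passage from $z\notin\partial U$ to $z\in U$ implicitly uses $\dom(b)\subseteq\cl{U}=U\cup\partial U$, which the paper spells out explicitly.
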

\begin{proof}
The case of Condition \beqref{item:SU} is obvious, and so from now on assume that Condition  \beqref{item:NonempetySubdif} holds.  In what follows we suppose to the contrary that $z\in S\cap \partial U$.

Let $\tilde{u}_y:X\to (-\infty,\infty]$ be defined by $\tilde{u}_y(x):=u_y(x)$ when $x\in S$ (where $u_y$ is defined in \beqref{eq:uQ}), and $\tilde{u}_y(x):=\infty$ otherwise. We can write 
\begin{equation}
\tilde{u}_y(x):=f(y)+\langle f'(y),x-y\rangle+\frac{L}{\mu}B(x,y)+\tilde{g}(x),\quad\forall x\in X.
\end{equation}
 Then $\tilde{u}_y$ is convex, proper and lower semicontinuous. 
We know from Lemma \bref{lem:UniqueMinimzerQ} that $z$ is the unique global minimizer of $u_y$ on $S$. Since $\tilde{u}_y(x)=\infty$ when $x\notin S$, it follows that $z$ is also the global minimizer of $\tilde{u}_y$ on $X$. Thus the simple necessary and sufficient condition for minimizers  \cite[p. 96]{VanTiel1984book} (Fermat's  rule) implies that $0\in \partial \tilde{u}_y(z)$. Since  for all $y\in U$ and all $x\in X$, we have 
\begin{equation*} 
\tilde{u}_y(x)=f(y)+\langle f'(y),x-y\rangle+\tilde{g}(x)+(L/\mu)(b(x)-b(y)-\langle b'(y),x-y\rangle),
\end{equation*} 
we see that $\tilde{u}_y$ is the sum of the two convex and proper functions defined by  $\tilde{u}_{1,y}(x):=f(y)+\langle f'(y),x-y\rangle-(L/\mu)(b(y)+\langle b'(y),x-y\rangle)+\tilde{g}(x)$ and $\tilde{u}_{2,y}(x):=(L/\mu)b(x)$ for each $x\in X$. 

We have $\dom(\tilde{u}_{1,y})=\dom(g)$ and $\dom(\tilde{u}_{2,y})=\dom(b)$. Moreover, the set of points at which $\tilde{u}_{1,y}$ is finite and continuous coincides with the set of points at which $\tilde{g}$ is finite and continuous, namely with the set of points at which $g$ is continuous. In addition, the set of points at which $\tilde{u}_{2,y}$ is finite and continuous coincides with the set of points at which $b$ is finite and continuous. Since we assume that the setting of Lemma \bref{lem:Optimality} holds, there exists some point in $\dom(b)\cap \dom(g)$ at which either $b$ or $g$ are continuous. Hence we can apply the sum rule \cite[Theorem 5.38, p. 77]{VanTiel1984book} and its proof  \cite[Theorem 5.38, pp.  78-79]{VanTiel1984book} to conclude that $\partial \tilde{u}_y(x)=\partial \tilde{u}_{1,y}(x)+\partial \tilde{u}_{2,y}(x)$ for every $x\in X$ and, furthermore, that the subsets  $\partial \tilde{u}_{1,y}(x)$ and $\partial \tilde{u}_{2,y}(x)$ are nonempty whenever $\partial \tilde{u}_y(x)$ is nonempty. 

Since we already know that $0\in\partial\tilde{u}_y(z)$, it follows that both $\partial \tilde{u}_{1,y}(z)\neq\emptyset$ and $\partial \tilde{u}_{2,y}(z)\neq\emptyset$. Therefore, by direct computation (or by the sum rule), $\partial \tilde{g}(z)=\partial \tilde{u}_{1,y}(z)-f'(y)+(L/\mu)b'(y)\neq\emptyset$ and $\partial b(z)=(\mu/L)\partial \tilde{u}_{2,y}(z)\neq \emptyset$. These relations contradict Condition  \beqref{item:NonempetySubdif}. 

The preceding discussion proves that $z$ cannot be in $S\cap \partial U$. Since obviously $z$, which is the minimizer of $u_y$ on $S$, belongs to $S$, it follows that $z\notin \partial U$. But $S\subseteq \dom(b)\subseteq \cl{\dom(b)}=\cl{U}$ (the last equality is just the well-known fact that says that the closure of the nonempty interior of a convex subset is equal to the closure of the subset itself \cite[Theorem 2.27(b), p. 29]{VanTiel1984book}), and hence $z\in S\subseteq \cl{U}$. This fact and the equality $\cl{U}=U\cup \partial U$ imply that $z\in U$. Hence $z\in S\cap U$, as required. 

 It remains to show that $z\in S\cap U$ in the particular case where $X$ is finite-dimensional and  $b$ is essentially smooth. Under these assumptions we can apply \cite[Theorem 26.1. pp. 251--252]{Rockafellar1970book} which implies that $\partial b(x)=\emptyset$ for each boundary point $x$ of $U$. Thus Condition \beqref{item:NonempetySubdif} holds and the assertion follows from previous paragraphs. 
\end{proof}

\begin{remark}
In view of the results of \cite[Section 5]{BauschkeBorweinCombettes2001jour}, mainly the ones related to essentially smooth  functions defined on reflexive Banach spaces, it might be that the finite-dimensional sufficient condition mentioned in Lemma \bref{lem:zInSU} can be extended in one way or another to a class of functions defined on some infinite-dimensional spaces. 
\end{remark}

The following lemma is needed for proving that the proximal forward-backwards algorithms in Section \bref{sec:ISTA-Bregman} are well defined (see Remark \bref{rem:L_kFiniteFx_k ISTA}). Versions of it are known in more restricted settings \cite[Lemma 1.2.3, pp. 22--23]{Nesterov2004book}, \cite[Lemma 5.1]{ReemDe-Pierro2017jour}, \cite[Lemma 2.1]{Cohen1980jour}. For the sake of completeness, we provide the proof in the appendix (Section 
\bref{sec:Appendix}). 

\begin{lem}\label{lem:LipschitzUpperBound}
Let $(X,\|\cdot\|)$ be a real normed space and let $U$ be an open subset of $X$. 
Suppose  that $x,y\in U$ are given and that the line segment $[x,y]$ is 
contained in $U$. Let $f:U\to \R$ be a continuously Fr\'echet  differentiable function the derivative $f'$ of which is Lipschitz continuous along the line segment $[x,y]$ 
 with a Lipschitz constant $L(f',[x,y])\geq 0$. Then the following inequality is satisfied for all $L\geq L(f',[x,y])$: 
\begin{equation}\label{eq:fL}
f(x)\leq f(y)+\langle f'(y), x-y\rangle+\frac{1}{2}L\|x-y\|^2. 
\end{equation}
\end{lem}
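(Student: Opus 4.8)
The plan is to reduce inequality \beqref{eq:fL} to the one–dimensional fundamental theorem of calculus by restricting $f$ to the segment $[x,y]$. First I would define $\varphi:[0,1]\to\R$ by $\varphi(t):=f(y+t(x-y))$, which is well defined because $[x,y]\subseteq U$. Since $f$ is continuously Fr\'echet differentiable on $U$, the chain rule for Fr\'echet derivatives gives that $\varphi$ is differentiable on $[0,1]$ with
\begin{equation*}
\varphi'(t)=\langle f'(y+t(x-y)),x-y\rangle,\quad t\in[0,1],
\end{equation*}
and moreover $\varphi'$ is continuous on $[0,1]$: the map $t\mapsto f'(y+t(x-y))$ is continuous into $X^*$ (it is the composition of the continuous map $f'$ with a continuous affine map), and pairing with the fixed vector $x-y$ is a continuous linear functional on $X^*$.

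Next I would apply the classical fundamental theorem of calculus to the $C^1$ function $\varphi$ to obtain
\begin{equation*}
f(x)-f(y)=\varphi(1)-\varphi(0)=\int_0^1\varphi'(t)\,dt=\int_0^1\langle f'(y+t(x-y)),x-y\rangle\,dt,
\end{equation*}
and then subtract $\langle f'(y),x-y\rangle=\int_0^1\langle f'(y),x-y\rangle\,dt$ to get the telescoped identity
\begin{equation*}
f(x)-f(y)-\langle f'(y),x-y\rangle=\int_0^1\langle f'(y+t(x-y))-f'(y),x-y\rangle\,dt.
\end{equation*}

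Finally I would estimate the integrand. For each $t\in[0,1]$, both $y+t(x-y)$ and $y$ lie on $[x,y]$, so the Lipschitz hypothesis yields $\|f'(y+t(x-y))-f'(y)\|\leq L(f',[x,y])\,\|t(x-y)\|\leq Lt\|x-y\|$ for every $L\geq L(f',[x,y])$; hence, by the definition of the dual norm, $|\langle f'(y+t(x-y))-f'(y),x-y\rangle|\leq Lt\|x-y\|^2$. Integrating this bound over $[0,1]$ gives $\int_0^1 Lt\|x-y\|^2\,dt=\frac{1}{2}L\|x-y\|^2$, and combining this with the displayed identity yields \beqref{eq:fL}. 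The only delicate points are bookkeeping ones — applying the chain rule correctly so that $\varphi'$ has the stated form, and verifying the continuity of $\varphi'$ so that the one–variable fundamental theorem of calculus is legitimately available (this continuity is immediate from the assumed continuity of $f'$) — so I do not anticipate a substantive obstacle.
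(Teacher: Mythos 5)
Your proposal is correct and follows essentially the same route as the paper's own proof: restrict $f$ to the segment via $\varphi(t):=f(y+t(x-y))$, apply the chain rule and the one-variable fundamental theorem of calculus, and bound the resulting integral $\int_0^1\langle f'(y+t(x-y))-f'(y),x-y\rangle\,dt$ by $L\|x-y\|^2\int_0^1 t\,dt$ using the Lipschitz hypothesis along $[x,y]$. No substantive differences to report.
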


The following proposition is needed for the proof of Corollary \bref{cor:AlmostO(1/k)}. Its proof can be  found in \cite{ReemReichDePierro2019arXiv(StabilityOptimalV1)}.

\begin{prop}\label{prop:f'Lip}
Suppose that $f:U\to \R$ is a twice continuously (Fr\'echet) differentiable function defined on an open and convex subset $U$  of some real normed space $(X,\|\cdot\|)$, $X\neq\{0\}$. Suppose that $C$ is a convex subset of $X$ which has the property that $C\cap U\neq\emptyset$. Assume that $f''$ is bounded and uniformly continuous on  bounded subsets of $C\cap U$. Fix an arbitrary $y_0\in C\cap U$, and let $s:=\sup\{\|f''(x)\|: x\in C\cap U\}$ and $s_0:=\|f''(y_0)\|$. If $s=\infty$, then for each strictly increasing sequence $(\lambda_k)_{k=1}^{\infty}$ of positive numbers which satisfies $\lambda_1>s_0$ and $\lim_{k\to\infty}\lambda_k=\infty$, there exists an increasing sequence $(S_k)_{k=1}^{\infty}$ of bounded and convex subsets of $C$ (and also closed if $C$ is closed) such that $S_k\cap U\neq\emptyset$ for all $k\in \N$, that $\cup_{k=1}^{\infty}S_k=C$, and that for each $k\in\N$, the function $f'$ is  Lipschitz continuous on $S_k\cap U$ with $\lambda_k$ as a Lipschitz constant; moreover, if $C$ contains more than one point, then also $S_k\cap U$ contains more than one point for each $k\in\N$. Finally, if $s<\infty$, then $f'$ is Lipschitz continuous on $C\cap U$ with $s$ as a Lipschitz constant. 
\end{prop}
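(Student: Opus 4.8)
The plan is to dispose of the bounded case first and then build the telescopic family in the unbounded case. \emph{Case $s<\infty$.} First I would note that $C\cap U$ is convex, being the intersection of the convex sets $C$ and $U$, so for any $x,y\in C\cap U$ the segment $[x,y]$ lies in $C\cap U\subseteq U$; since $f'$ is Fr\'echet differentiable on the open set $U$ with derivative $f''$ and $\|f''(z)\|\le s$ for each $z\in[x,y]$, the generalized Mean Value Theorem \cite[Theorem 1.8, pp. 13, 23]{AmbrosettiProdi1993book}, applied to $f'$ along $[x,y]$, yields $\|f'(x)-f'(y)\|\le s\|x-y\|$. This proves the last assertion.

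\emph{Case $s=\infty$.} Fix a sequence $(\lambda_k)_{k=1}^\infty$ as in the statement. The key object will be the nondecreasing ``local modulus'' $\phi:[0,\infty[\to[0,\infty]$ defined by $\phi(r):=\sup\{\|f''(z)\|:z\in C\cap U,\ \|z-y_0\|\le r\}$. I would record that: $\phi(r)<\infty$ for every $r\ge0$ (this is precisely where the hypothesis that $f''$ is bounded on bounded subsets of $C\cap U$ is used); $\phi(0)=\|f''(y_0)\|=s_0$; $\lim_{r\to\infty}\phi(r)=s=\infty$; and $\lim_{r\downarrow0}\phi(r)=s_0$, by continuity of $f''$ at $y_0$. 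Next, for each $k\in\N$ I would set $r_k:=\sup\{r\ge0:\phi(r)\le\lambda_k\}$ and check four things: $r_k>0$ (because $\phi(0)=s_0<\lambda_1\le\lambda_k$ together with $\lim_{r\downarrow0}\phi(r)=s_0$ gives some $r>0$ in the defining set); $r_k<\infty$ (because $\phi(r)\to\infty$); $(r_k)_{k=1}^\infty$ is nondecreasing (because $(\lambda_k)$ is increasing, so the defining sets are nested); and $r_k\to\infty$ --- otherwise all $r_k$ would be bounded by some $R$, and since $\phi(r)>\lambda_k$ for every $r>r_k$ one would get $\phi(R+1)>\lambda_k$ for all $k$, i.e.\ $\phi(R+1)=\infty$, contradicting the finiteness of $\phi$. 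I would also note that $\phi(r)\le\lambda_k$ for every $r<r_k$ (approximate $r_k$ from below by points of the defining set and use monotonicity of $\phi$).

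With the radii in hand, I would pass to the slightly smaller radii $\rho_k:=\tfrac{k}{k+1}r_k$, so that $0<\rho_k<r_k$, the sequence $(\rho_k)_{k=1}^\infty$ is still nondecreasing, $\rho_k\to\infty$, and hence $\phi(\rho_k)\le\lambda_k$; then I would define $S_k$ to be the intersection of $C$ with the closed ball of radius $\rho_k$ about $y_0$. It is then routine that each $S_k$ is convex and bounded (and closed when $C$ is closed), that $S_k\subseteq S_{k+1}$, that $\bigcup_{k=1}^\infty S_k=C$ since $\rho_k\to\infty$, and that $y_0\in S_k\cap U$, so $S_k\cap U\ne\emptyset$. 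For the Lipschitz estimate, given $x,y\in S_k\cap U$ the segment $[x,y]$ lies in $S_k\cap U\subseteq U$ by convexity of $C$, of the ball and of $U$, while $\sup_{z\in S_k\cap U}\|f''(z)\|\le\phi(\rho_k)\le\lambda_k$; applying the generalized Mean Value Theorem to $f'$ along $[x,y]$ gives $\|f'(x)-f'(y)\|\le\lambda_k\|x-y\|$. Finally, if $C$ contains a point $c\ne y_0$, then since $U$ is open and $\rho_k>0$ the points $y_0+t(c-y_0)$ belong to $C\cap U$ and to the closed ball of radius $\rho_k$ about $y_0$ for all sufficiently small $t>0$, so $S_k\cap U$ contains more than one point.

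\textbf{Main obstacle.} The only genuinely delicate point will be that $\phi$ need not be continuous, so the supremum $r_k$ of $\{r:\phi(r)\le\lambda_k\}$ could be a jump point of $\phi$ at which $\phi(r_k)>\lambda_k$; this forces the small shrinkage to $\rho_k<r_k$, and then one must be a little careful that the $\rho_k$ can still be chosen simultaneously nondecreasing, positive and divergent. Everything else reduces to manipulations with monotone real functions together with the convexity of $C\cap U$, which is what makes the generalized Mean Value Theorem applicable along every segment.
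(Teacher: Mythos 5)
Your proof is correct and essentially self-contained. Note that the paper itself does not prove Proposition \ref{prop:f'Lip}; it only cites an external reference for the proof, so a line-by-line comparison is not possible. That said, your construction is exactly the one the paper's framework suggests: intersect $C$ with closed balls about $y_0$ whose radii are calibrated through the monotone local modulus $\phi(r)=\sup\{\|f''(z)\|:z\in C\cap U,\ \|z-y_0\|\le r\}$, and then convert the bound on $f''$ into a Lipschitz bound on $f'$ via the generalized Mean Value Theorem of \cite[Theorem 1.8]{AmbrosettiProdi1993book} --- the same tool the paper invokes after Assumption \bref{assum:f'Lip} and in Example \bref{ex:NonHilbertian}. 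All the delicate points are handled: $\phi$ is finite-valued precisely because $f''$ is bounded on bounded subsets of $C\cap U$; $r_k>0$ follows from $\lambda_1>s_0$ together with continuity of $f''$ at $y_0$ (you do not even need the full strength of uniform continuity on bounded sets); $r_k\to\infty$ follows from finiteness of $\phi$; and the shrinkage $\rho_k=\tfrac{k}{k+1}r_k$ correctly sidesteps the possible jump of $\phi$ at $r_k$ while preserving positivity, monotonicity (since $\tfrac{k}{k+1}$ is increasing and $r_k$ is nondecreasing) and divergence. The verification that $S_k\cap U$ equals the set over which $\phi(\rho_k)$ is computed, so that $\sup_{z\in S_k\cap U}\|f''(z)\|\le\lambda_k$, closes the argument, and the ``more than one point'' and $s<\infty$ cases are handled correctly. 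I see no gap.
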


Now it is possible to prove Theorem \bref{thm:BISTA-Convergence} and the corollaries which follow it. The proofs are based on the previous assertions  and also on the notation and assumptions of Section  \bref{sec:ISTA-Bregman}. \\

{\noindent \bf Proof of Theorem \bref{thm:BISTA-Convergence}:}
 Since $\cup_{k=1}^{\infty}S_k=C$ and since $x_{\textnormal{min}}\in \textnormal{MIN}(F)\subseteq C$, there exists an index $k_0\in \N$ such that $x_{\textnormal{min}}\in S_{k_0}$. Since  $S_{k}\subseteq S_{k+1}$ for all $k\in\N$ (Assumption \bref{assum:MonotoneStronglyConvex}), one has $x_{\textnormal{min}}\in S_{k}$ for all $k\geq k_0$. For a technical reason (see the discussion after \beqref{eq:sumFx_kx_iB_1}), if $F(x_1)=\infty$, then  we take $k_0$ to be at least 2. This is  possible since by definition, $k_0$ is just an index (not necessarily the first) for which $x_{\textnormal{min}}\in S_{k_0}$, and hence, if $x_{\textnormal{min}}\in S_1$, then also $x_{\textnormal{min}}\in S_2$ by the inclusion $S_1\subseteq S_2$. 
 
 From Remark \bref{rem:L_kFiniteFx_k ISTA} it follows that for all $i\in\N$ inequality \beqref{eq:F<Q}  holds with $S:=S_{i+1}$, $L:=L_{i+1}$, $y:=x_{i}$, $\mu:=\mu_{i+1}$ and $z:=x_{i+1}$,  and also that  $F(x_{i+1})$ is finite for all $i\in\N$. 
According to our assumption $b$ is strongly convex on $S_k$ for all $k\in\N$. Moreover, $x_{\textnormal{min}}\in S_k$ for all $k\geq k_0$ and $x_k\in S_k\cap U$ for each $k\in\N$ (as follows from Assumption \bref{assum:pInU} and an induction argument). Thus we can use  Lemma \bref{lem:FB_k}, where in \beqref{eq:FB_k} we substitute $x:=x_{\textnormal{min}}$, $y:=x_{i}$, $S:=S_{i+1}$, $\mu:=\mu_{i+1}$, $L:=L_{i+1}$ and $z:=x_{i+1}$, where $k_0\leq i\leq k$. This yields 
\begin{equation}\label{eq:F>B}
F(x_{\textnormal{min}})-F(x_{i+1})\geq (L_{i+1}/\mu_{i+1})(B(x_{\textnormal{min}},x_{i+1})-B(x_{\textnormal{min}},x_{i})).
\end{equation}
Since $F(x_{\textnormal{min}})-F(x_{i+1})\leq 0$, it follows from \beqref{eq:F>B} that  $B(x_{\textnormal{min}},x_{i+1})-B(x_{\textnormal{min}},x_{i})\leq 0$. 
From \beqref{eq:tau_rho ISTA}, \beqref{eq:F>B}, the inequality  $F(x_{\textnormal{min}})-F(x_{i+1})\leq 0$, and the inequalities $\mu_{i+1}\geq \mu_{k+1}$ (Assumption \bref{assum:MonotoneStronglyConvex}) and 
$\tau_{i+1}\leq \tau_{k+1}$ (Remark \bref{rem:BoundLk ISTA}) for all $i\in \{k_0,\ldots,k\}$,  it follows that 
\begin{multline}\label{eq:Fx_iB_1}
\frac{F(x_{\textnormal{min}})-F(x_{i+1})}{\tau_{k+1}}\geq
\frac{F(x_{\textnormal{min}})-F(x_{i+1})}{\tau_{i+1}}\geq \frac{F(x_{\textnormal{min}})-F(x_{i+1})}{L_{i+1}}\geq\\ \frac{B(x_{\textnormal{min}},x_{i+1})-B(x_{\textnormal{min}},x_{i})}{\mu_{i+1}}
\geq \frac{B(x_{\textnormal{min}},x_{i+1})-B(x_{\textnormal{min}},x_{i})}{\mu_{k+1}}.
\end{multline}
By summing \beqref{eq:Fx_iB_1} from $i:=k_0$ to $i:=k$, we obtain 
\begin{equation}\label{eq:sumFx_kB_1}
(k+1-k_0)F(x_{\textnormal{min}})-\sum_{i=k_0}^{k}F(x_{i+1})\geq (\tau_{k+1}/\mu_{k+1})(B(x_{\textnormal{min}},x_{k+1})-B(x_{\textnormal{min}},x_{k_0})).
\end{equation}
Using  Lemma \bref{lem:FB_k}, where in \beqref{eq:FB_k} we substitute $x:=x_{i}$, $y:=x_i$, $S:=S_{i+1}$, $L:=L_{i+1}$, $i\geq k_0$, $z:=x_{i+1}$, and $\mu:=\mu_{i+1}$,  using \beqref{eq:tau_rho ISTA} and the nonnegativity of $B$, and the fact that $B(x_i,x_i)=0$, and using the inequality $\mu_{k_0}\geq \mu_{i+1}$ for all $i\in\{k_0,\ldots,k\}$ (as a result of Assumption \bref{assum:MonotoneStronglyConvex}), we obtain 
\begin{equation}\label{eq:Fx_i_i+1B_1}
F(x_{i})-F(x_{i+1})\geq (L_{i+1}/\mu_{i+1})B(x_{i},x_{i+1})\geq (L_{i+1}/\mu_{k_0})B(x_{i},x_{i+1}).
\end{equation}
After multiplying \beqref{eq:Fx_i_i+1B_1} by $i-k_0$, summing from $i:=k_0$ to $i:=k$, and 
performing simple manipulations, we arrive at 
\begin{multline}\label{eq:sumFx_kx_iB_1}
-(k+1-k_0)F(x_{k+1})+\sum_{i=k_0}^{k}F(x_{i+1})\\
=\sum_{i=k_0}^{k}\left((i-k_0)F(x_{i})-(i+1-k_0)F(x_{i+1})+F(x_{i+1})\right)\\
\geq \sum_{i=k_0}^{k}(L_{i+1}/\mu_{k_0})(i-k_0)B(x_{i},x_{i+1}).
\end{multline}
There is a minor issue related to \beqref{eq:sumFx_kx_iB_1} which should be noted: if $k_0=1$ and $F(x_1)=\infty$, then one of the terms in \beqref{eq:sumFx_kx_iB_1} is $0\cdot\infty$, and therefore it is not defined. In order to avoid this possibility, we re-defined $k_0$ in advance (see the  beginning of the proof) to be 2 (the only case where $F(x_k)=\infty$ is when $k=1$, since Remark \bref{rem:L_kFiniteFx_k ISTA} ensures that $F(x_k)\in\R$ for all $k\geq 2$). 

After summing \beqref{eq:sumFx_kB_1} and \beqref{eq:sumFx_kx_iB_1} and using the nonnegativity of some terms, it follows that 
\begin{multline}
(k+1-k_0)(F(x_{\textnormal{min}})-F(x_{k+1}))\\
\geq (\tau_{k+1}/\mu_{k+1})(B(x_{\textnormal{min}},x_{k+1})-B(x_{\textnormal{min}},x_{k_0}))+\sum_{i=k_0}^{k}(i-k_0)(L_{i+1}/\mu_{k_0})B(x_{i},x_{i+1})\\
\geq -(\tau_{k+1}/\mu_{k+1}) B(x_{\textnormal{min}},x_{k_0}).
\end{multline}
This inequality implies \beqref{eq:O(1/k)}, as claimed.

Now, if \beqref{eq:tau_mu_k} holds, then obviously $\lim_{k\to\infty}F(x_k)=F(x_{\textnormal{min}})$. From  now on we assume that \beqref{eq:tau_mu_k} holds, that $B$ has the limiting difference property and that all of its first type level-sets are bounded. Our goal is to prove that the weak limit  $\lim_{k\to\infty}x_k$ exists and belongs to $\textnormal{MIN}(F)$. Since $F(x_{\textnormal{min}})-F(x_k)\leq 0$  for all $k\in \N$, it follows from \beqref{eq:Fx_iB_1} that \begin{equation}\label{eq:BregmanMonotone}
B(x_{\textnormal{min}},x_{k+1})\leq B(x_{\textnormal{min}},x_k), \quad \forall  k\geq k_0,\,k\in \N, 
\end{equation}
namely, the sequence of nonnegative numbers  $(B(x_{\textnormal{min}},x_k))_{k=k_0}^{\infty}$ is 
decreasing. In particular, one has $B(x_{\textnormal{min}},x_k)\leq B(x_{\textnormal{min}},x_{k_0})$ for every $k\geq k_0$. This inequality and the nonnegativity of $B$ imply that $(B(x_{\textnormal{min}},x_k))_{k=1}^{\infty}$ is bounded. By our assumption the first type level-set $\{y\in U: B(x_{\textnormal{min}},y)\leq B(x_{\textnormal{min}},x_{k_0})\}$ is bounded. Since $(x_k)_{k=k_0}^{\infty}$ is contained in this level-set, $(x_k)_{k=k_0}^{\infty}$ and hence $(x_k)_{k=1}^{\infty}$ are bounded.

Let $x_{\infty}$ be an arbitrary weak cluster point of $(x_k)_{k\in \N}$. Such a cluster point exists because $X$ is reflexive and $(x_k)_{k\in \N}$ is bounded. Since $C$ is closed and convex, and $X$ is reflexive, $C$ is  weakly closed and hence $x_{\infty}\in C$. Now we observe that \beqref{eq:tau_mu_k} implies that 
\begin{equation}
\frac{\tau_{k+1}}{(k+1-k_0)\mu_{k+1}}=\frac{\tau_{k+1}}{(k+1)\mu_{k+1}}\cdot\frac{k+1}{k+1-k_0}
\underset{k\to \infty}{\longrightarrow} 0,
\end{equation}
next we go to a subsequence which converges weakly to $x_{\infty}$, and then we recall that $F$ is convex and  lower semicontinuous (with respect to the norm topology), and hence weakly lower semicontinuous \cite[Corollary 3.9, p. 61]{Brezis2011book}. These observations and \beqref{eq:O(1/k)} imply  that $F(x_{\infty})\leq F(x_{\textnormal{min}})$. This proves that $F(x_{\infty})=F(x_{\textnormal{min}})$ because $F(x_{\textnormal{min}})$ is the minimal value of $F$ and hence 
$F(x_{\textnormal{min}})\leq F(x_{\infty})$ holds trivially. 

From \beqref{eq:BregmanMonotone} it follows that $\lim_{k\to\infty}B(x_{\textnormal{min}},x_k)$ exists (and  is finite). Since the previous paragraph shows that  $x_{\infty}\in \textnormal{MIN}(F)$, it follows from Assumption \bref{assum:O(F)} that $x_{\infty}\in U$. Moreover, since $x_{\infty}\in C$ and $C=\cup_{i=1}^{\infty}S_i$, there exists an index $i_0\in\N$ such that $x_{\infty}\in S_{i_0}$. Assumption \bref{assum:MonotoneStronglyConvex} implies that $x_{\infty}\in S_i$ for all $i\geq i_0$. By using  Lemma \bref{lem:FB_k}, where in \beqref{eq:FB_k} we take any  $i_0\leq i\in\N$ and substitute $S:=S_{i+1}$, $\mu:=\mu_{i+1}$, $L:=L_{i+1}$, $z:=x_{i+1}$, $x:=x_{\infty}\in S_{i+1}$ and $y:=x_{i}$, we have
\begin{equation*}
F(x_{\infty})-F(x_{i+1})\geq (L_{i+1}/\mu_{i+1})(B(x_{\infty},x_{i+1})-B(x_{\infty},x_{i})).
\end{equation*}
Since $F(x_{\infty})=F(x_{\textnormal{min}})\leq F(x_{i+1})$ as shown a few lines above, one has $B(x_{\infty},x_{i+1})\leq B(x_{\infty},x_{i})$ for each $i\geq i_0$. Therefore $(B(x_{\infty},x_{i}))_{i=i_0}^{\infty}$ is a decreasing sequence of nonnegative numbers and, as a result, $\lim_{i\to\infty}B(x_{\infty},x_i)$ exists. Since $x_{\infty}$ was an arbitrary weak cluster point of $(x_k)_{k=1}^{\infty}$, since we assume that $B$ has the limiting difference property and since $(x_k)_{k=1}^{\infty}$ is a bounded sequence in $U$, Lemma \bref{lem:BregmanLimit} ensures that $(x_k)_{k=1}^{\infty}$  converges weakly to a point $z_{\infty}\in U$. From previous paragraphs we have $z_{\infty}\in\textnormal{MIN}(F)$. 

Finally, if $b'$ is weak-to-weak$^*$ sequentially continuous and either $U$ is bounded or for each $x\in C$, there exists $r_x\geq 0$ such that $\{y\in U: \|y\|\geq r_x\}\neq\emptyset$ and $b$ is  uniformly convex relative to $(\{x\},\{y\in U: \|y\|\geq r_x\})$ with a gauge $\psi_x$ which satisfies $\lim_{t\to\infty}\psi_x(t)=\infty$, then Remarks \bref{rem:BregmanDivergence}\beqref{item:LimitingDifferenceProperty} and \bref{rem:BregmanDivergence}\beqref{item:LevelSetBounded} imply that $B$ has the limiting difference property  and its first type level-sets are bounded. Hence the previous paragraph implies that $(x_k)_{k=1}^{\infty}$  converges weakly to a point $z_{\infty}\in \textnormal{MIN}(F)$.
\qed\vspace{0.3cm}

{\bf \noindent Proof of Corollary \bref{cor:AlmostO(1/k)}:}
Suppose first that $\sup\{\|f''(x)\|: x\in C\cap U\}=\infty$. 
 Fix some $y_0\in C\cap U$ and let $(\lambda_k)_{k=1}^{\infty}$ be any strictly  increasing sequence of positive numbers which satisfies $\lambda_1>\|f''(y_0)\|$, $\lim_{k\to\infty}\lambda_k/k=0$ and  $\lim_{k\to\infty}\lambda_k=\infty$ (say, $\lambda_k=\alpha k^q$ for all $k\in\N$, where $\alpha>\|f''(y_0)\|$ and $q\in (0,1)$ are fixed). Since $\lim_{k\to\infty}\lambda_k=\infty$, $\lambda_1>\|f''(y_0)\|$ and $C$ contains more than one point, Proposition \bref{prop:f'Lip} implies that there is an increasing sequence of nonempty closed and convex subsets $S_k\subseteq C$ such that $S_k\cap U$ contains more than one point and $L(f',S_k\cap U)\leq \lambda_k$ for every $k\in\N$, and such that $\cup_{k=1}^{\infty}S_k=C$. 

Let $\mu_k:=\mu$ for all $k\in\N$ and consider two cases. In the first case, we are in the  Lipschitz step size rule (Algorithm \bref{alg:LipschitzStep}), and in the second case we are in the backtracking step size rule (Algorithm \bref{alg:BacktrackingStep}). If the first case holds, then let $L_1$ be any positive number satisfying $L_1\geq L(f',S_1)$ and let $L_k:=\lambda_k$ for each $k\geq 2$; in addition, let $\tau_k:=L_k$ for every $k\in\N$. If the second case holds, then let $L_1$ be any positive number satisfying $L_1\leq \eta L(f',S_1)$ and let $L_k:=\eta^{i_k}L_{k-1}$ whenever $k\geq 2$, where $i_k$ is defined in Algorithm \bref{alg:BacktrackingStep}; in addition, let $\tau_k:=\eta\lambda_k$ for all $k\in\N$. These choices ensure  that $\tau_{k+1}\geq \tau_k\geq L_k$ for each $k\in\N$. Indeed, in  the Lipschitz step size rule this assertion follows immediately from the definition of $\tau_k$ and the assumption that $(\lambda_j)_{j=1}^{\infty}$ is increasing; in the backtracking step size rule the assertion follows from the definition of $\tau_k$ and the fact that $L_k\leq \eta L(f',S_k)$ (as shown in Remark \bref{rem:BoundLk ISTA}; note that the proof there holds whenever $L_1\leq \eta L(f',S_1\cap U)$, no matter whether $S_j\neq C$ for some $j\in\N$ or not) and hence, from the choice of $\lambda_k$, we have $L_k\leq \eta \lambda_k=\tau_k$ for each $k\in\N$. In addition, in the first case we also have $L_k=\lambda_k\geq L(f',S_k\cap U)$, as follows from the previous paragraph. Hence we can use Theorem \bref{thm:BISTA-Convergence} which implies that \beqref{eq:O(1/k)} holds (and hence also \beqref{eq:k^q} when $\lambda_k=\alpha k^q$).

It remains to consider the case where $\sup\{\|f''(x)\|: x\in C\cap U\}<\infty$. In this case $f'$ is Lipschitz continuous on $C\cap U$ (see Proposition \bref{prop:f'Lip}). Denote $S_k:=C$ for each $k\in \N$. In the case of Algorithm \bref{alg:LipschitzStep}, take any positive number $L$ which satisfies $L\geq L(f',C\cap U)$ and denote $L_k:=L:=\tau_k$ for every $k\in\N$. In the case of Algorithm \bref{alg:BacktrackingStep}, select the $L_k$ parameters according to the rule mentioned there with $L_1>\eta L(f',C\cap U)$. Remark \bref{rem:BoundLk ISTA} ensures that $L_k=L_1$ for each $k\in\N$. Let $\tau_k:=L_1$ for all $k\in\N$. Now we can use Theorem \bref{thm:BISTA-Convergence} which implies that an $O(1/k)$ rate of convergence in the function values holds (see \beqref{eq:O(1/k)}). 

Finally, suppose that for each $x\in C$, there exists $r_x\geq 0$  such that $\{y\in U: \|y\|\geq r_x\}\neq\emptyset$ and $b$ is  uniformly convex relative to $(\{x\},\{y\in U: \|y\|\geq r_x\})$ with a gauge $\psi_x$ satisfying $\lim_{t\to\infty}\psi_x(t)=\infty$, and that $b'$ is weak-to-weak$^*$ sequentially continuous on $U$. Since \beqref{eq:tau_mu_k} holds by our choice of $\tau_k$ and $L_k$, Theorem \bref{thm:BISTA-Convergence} ensures that  $(x_k)_{k=1}^{\infty}$ converges weakly to a solution of \beqref{eq:F-ISTA}.
\qed\vspace{0.3cm}

{\noindent\bf Proof of Corollary \bref{cor:O(1/k)}:}
Denote, as stated, $S_k:=C$ and $\mu_k:=\mu$ for each $k\in \N$. The assumption on $b$ implies that $(S_k)_{k=1}^{\infty}$ is a telescopic  sequence in $C$. The assumption on $f'$ implies that $f'$ is Lipschitz continuous on each $S_k$ with a Lipschitz constant $L(f',C\cap U)$. In the case of Algorithm \bref{alg:LipschitzStep}, take any positive number $L$ which satisfies $L\geq L(f',C\cap U)$ and denote $L_k:=L:=\tau_k$ for every $k\in\N$. In the case of Algorithm \bref{alg:BacktrackingStep}, select the $L_k$ parameters according to the rule mentioned there with $L_1>\eta L(f',C\cap U)$. Remark \bref{rem:BoundLk ISTA} ensures that $L_k=L_1$ for each $k\in\N$. Let $\tau_k:=L_1$  for all $k\in\N$. In both cases we can use Theorem \bref{thm:BISTA-Convergence} which implies that an $O(1/k)$ rate of convergence in the function values holds (see \beqref{eq:O(1/k)}), namely that $F(x_k)-F^*=O(1/k)$ for all $k\geq k_0$. The choice of $k_0$ (see the first lines of proof of Theorem \bref{thm:BISTA-Convergence}) shows that we can take  $k_0=1$, unless $F(x_1)=\infty$, where  in this latter case we can take $k_0=2$. 

As for the convergence of $(x_k)_{k=1}^{\infty}$ to a solution of \beqref{eq:F-ISTA}, we separate the proof into two cases, according to either the assumption that $C$ is bounded or the assumption that $b$ is uniformly convex on $\dom(b)$. In the first case the proof is similar to the proof of Theorem \bref{thm:BISTA-Convergence}, where the only difference is that we do not need to invoke any assumption on the boundedness of the first type level-sets of $B$ since this assumption was needed there only to ensure the boundedness of $(x_k)_{k=1}^{\infty}$ (see the lines after \beqref{eq:BregmanMonotone}), and here we assume in advance that $C$ is bounded and, according to Assumption \bref{assum:pInU}, we know that $(x_k)_{k=1}^{\infty}$ is contained in $C$. 

In the second case, if $\dom(b)$ is bounded, then we continue as in the first case above since $C\subseteq\dom(b)$. Assume now that $\dom(b)$ is not bounded. Then $U$ is also not bounded, because otherwise $\cl{U}$ is bounded too and then the equality $\cl{\dom(b)}=\cl{U}$ implies that $\dom(b)$ is bounded, a contradiction. Now let $x\in C$ be given and let $r_x$ be an arbitrary nonnegative number. Then the set $\{y\in U: \|y\|\geq r_x\}$ is not empty (otherwise $U$ is bounded). Since $b$ is uniformly convex on $\dom(b)$, it is obviously uniformly convex relative to the pair $(\{x\},\{y\in U: \|y\|\geq r_x\})$, where the relative gauge $\psi$ is simply the modulus of uniform convexity of $b$ on $\dom(b)$ (see Definition \bref{def:TypesOfConvexity}). This gauge satisfies $\lim_{t\to\infty}\psi(t)=\infty$, as follows from \cite[Lemma 3.3]{ReemReichDePierro2019jour(BregmanEntropy)}, because $\dom(b)$ is unbounded (actually, \cite[Lemma 3.3]{ReemReichDePierro2019jour(BregmanEntropy)} implies that $\psi$ is even supercoercive). Since $b'$ is weak-to-weak$^*$ sequentially continuous, we conclude from Theorem \bref{thm:BISTA-Convergence} that $(x_k)_{k=1}^{\infty}$ converges weakly to a solution of \beqref{eq:F-ISTA}.
\qed

\section{Conclusions}\label{sec:Conclusions}
In this paper, we presented, in a rather general setting, new Bregmanian variants of the proximal gradient 
method for solving the widely useful minimization problem of the sum of two convex functions over a  constraint set. A major advantage of our method (TEPROG) is that it does not require the smooth term in the objective function to have a Lipschitz continuous gradient, an assumption which restricts the scope of applications of the proximal gradient method, but nonetheless is imposed in almost all of the many works devoted to this method. We were able to do so by decomposing the constraint set into a certain telescopic  union of subsets, and performing the minimization needed in each of the iterative steps over one subset from this union, instead of over the entire constraint set. Moreover, under practical assumptions, we were able to prove a sublinear non-asymptotic convergence of TEPROG (or, sometimes, a rate of convergence which is arbitrarily close to sublinear) to the optimal value of the objective function, as well as the weak convergence of the iterative sequence to a minimizer.  We have also obtained a few results which, we feel, are of independent interest, such as Lemma \bref{lem:FB_k} and Lemma \bref{lem:zInSU}. 

We believe that TEPROG, as well as the ``telescopic'' idea (regarding the telescopic sequence), hold a promising potential to be applied in other theoretical and practical scenarios. It will be interesting to test TEPROG numerically and to compare it with other methods, and also to check whether suitable inexact versions of TEPROG, namely ones which allow errors to appear during the iterative process, exhibit similar convergence properties (in this connection, see \cite{ReemDe-Pierro2017jour}, \cite{ReemReich2018jour} and the  references therein). 

\section{Appendix}\label{sec:Appendix}
Here we provide the proofs of some claims mentioned without proofs in previous sections. 
\begin{proof}[{\bf Proofs of some claims mentioned in Remark \bref{rem:BoundLk ISTA}}] We first show that in the backtracking step size rule, if  $S_j\neq C$ for some $j$, then $L_k\leq \eta L(f',S_k\cap U)$ for each $k\in\N$. The case $k=1$ holds by our assumption on $L_1$ since $S_j\neq C$ for some $j\in \N$. Let $k\geq 2$ and suppose, by induction,  that the claim holds for all natural numbers between 1 to $k-1$. If, to the contrary, we have $L_k>\eta L(f',S_k\cap U)$, then $\eta^{i_k-1}L_{k-1}>L(f',S_k\cap U)$ because $L_k/\eta=\eta^{i_k-1}L_{k-1}$. Hence, by using \beqref{eq:FQ} with $L:=\eta^{i_k-1}L_{k-1}$, we conclude that  \beqref{eq:L_k_ISTA} holds with $L$ instead of $L_k$, a contradiction to the minimality of $i_k$ unless $i_k=0$. But when $i_k=0$, we have $L_k=L_{k-1}$, hence, from the induction hypothesis and the fact that $L(f',S_{k-1}\cap U)\leq L(f',S_{k}\cap U)$ (since $L(f',S_k\cap U):=\sup\{\|f'(x)-f'(y)\|/\|x-y\|: x,y\in S_k\cap U, x\neq y\}$ and $S_{k-1}\subseteq S_k$), we have $L_k=L_{k-1}\leq \eta L(f',S_{k-1}\cap U)\leq \eta L(f',S_k\cap U)$, a contradiction to the assumption on  $L_k$. 
                 
Now we show that if we are in the backtracking step size rule and both $S_k=C$ for all $k\in \N$ and $L_1>\eta L(f',S_1\cap U)$, then $L_{k+1}=L_k$ for each $k\in\N$. Indeed, since $S_1=C$ and $L_{k+1}\geq L_k$ for each $k\in\N$ (as shown in Remark \bref{rem:BoundLk ISTA}), we have  $L_k>\eta L(f',C\cap U)$ for all $k\in\N$. If we do not have $L_{k+1}=L_k$ for each $k\in\N$, then $i_{k+1}>0$ for some $k\in\N$ and for this $k$ we have $\eta^{i_{k+1}-1}L_{k}=L_{k+1}/\eta>L(f',C\cap U)$. Thus \beqref{eq:FQ} with $L:=\eta^{i_{k+1}-1}L_{k}$ implies that \beqref{eq:L_k_ISTA} holds with $L$ instead of $L_k$, a contradiction to the minimality of $i_{k+1}$. Hence $L_{k+1}=L_k$ for each $k\in\N$.
\end{proof}

\begin{proof}[{\bf Proof of Lemma \bref{lem:BregmanLimit}}]

Assume to the contrary that there are at least two different weak cluster points $q_1:=w$-$\lim_{k\to\infty, k\in N_1}x_k$ and $q_2:=w$-$\lim_{k\to\infty, k\in N_2}x_k$ in $X$, where $N_1$ and $N_2$ are two infinite subsets of $\N$. By our assumption, $q_1,q_2\in U$, and hence, since $b$ satisfies the limiting difference property, we have
\begin{subequations}\label{eq:B(q_1,q_2)B(q_2,q_1)}
\begin{equation}
B(q_2,q_1)=\lim_{k\to\infty, k\in N_1}(B(q_2,x_k)-B(q_1,x_k))
\end{equation}
\quad\quad\quad\quad\quad\quad\quad and
\begin{equation}
B(q_1,q_2)=\lim_{k\to\infty, k\in N_2}(B(q_1,x_k)-B(q_2,x_k)).
\end{equation}
\end{subequations}
Since we assume that $L_1:=\lim_{k\to\infty}B(q_1,x_k)$ and $L_2:=\lim_{k\to\infty}B(q_2,x_k)$ exist and finite, we conclude from \beqref{eq:B(q_1,q_2)B(q_2,q_1)} that $B(q_2,q_1)=L_2-L_1=-(L_1-L_2)=-B(q_1,q_2)$. The assumptions on $b$ imply that $B$ is nonnegative (see, for example,  \cite[Proposition 4.13(III)]{ReemReichDePierro2019jour(BregmanEntropy)}), and hence $0\leq B(q_2,q_1)=-B(q_1,q_2)\leq 0$. Thus $B(q_1,q_2)=B(q_2,q_1)=0$. Since $b$ is strictly convex on $U$, for all $z_1\in \dom(b)$ and $z_2\in U$, we have $B(z_1,z_2)=0$ if and only if $z_1=z_2$ (see, for example, \cite[Proposition 4.13(III)]{ReemReichDePierro2019jour(BregmanEntropy)}). Thus $q_1=q_2$, a contradiction to the initial assumption. Thus all the weak cluster points of $(x_k)_{k=1}^{\infty}$ coincide. Since $(x_k)_{k=1}^{\infty}$ is a bounded sequence in a reflexive space, a well-known classical result implies that $(x_k)_{k=1}^{\infty}$ has at least one weak cluster point $q\in X$, which, by our assumption, is in $U$. We conclude from the above-mentioned discussion that all the weak cluster points of $(x_k)_{k=1}^{\infty}$  coincide with $q$. 

We claim that $(x_k)_{k=1}^{\infty}$ converges weakly to $q$. Indeed, otherwise there are a weak neighborhood $V$ of $q$ and a subsequence $(x_{k_j})_{j=1}^\infty$ of $(x_k)_{k=1}^\infty$ such that all its elements are not in $V$. But this subsequence  is a bounded sequence in a reflexive Banach space (since $(x_k)_{k=1}^{\infty}$ is bounded) and hence it has a convergent subsequence which converges weakly, as proved above, to $q$. In particular, infinitely  many elements of $(x_{k_j})_{j=1}^{\infty}$ are in $V$, a contradiction. 
\end{proof}

\begin{proof}[{\bf Proof of Lemma \bref{lem:SuperCoercive}}]
Let $\tilde{v}:X\to (-\infty,\infty]$ be defined by $\tilde{v}(x):=v(x)$ whenever $x\in S$ and $\tilde{v}(x):=\infty$ whenever $x\notin S$. Given $t\in\R$, we have $\{x\in X: \tilde{v}(x)\leq t\}=\{x\in S: v(x)\leq t\}$. Since $v$ is lower semicontinous on $S$, it follows that $\{x\in S: v(x)\leq t\}$ is closed in $S$, and since $S$ is closed in $X$, it follows that the sets$\{x\in S: v(x)\leq t\}$ and hence $\{x\in X: \tilde{v}(x)\leq t\}$ are closed in $X$. In other words, $\tilde{v}$ is lower semicontinuous on $X$. In addition, since $v$ is proper and convex on $S$, it is immediate to verify that $\tilde{v}$ is proper  and convex on $X$. Hence $\tilde{v}$ is minorized by  a continuous affine functional \cite[Lemma 6.12, p. 89]{VanTiel1984book}, that is, there exist a continuous linear functional $x^*:X\to\R$ and $\eta\in \R$ such that $\eta+\langle x^*,x\rangle\leq \tilde{v}(x)$ for all $x\in X$. Thus, by considering $x\in S$ and using the definition of $\|x^*\|$, the unboundedness of $S$ and the assumption on $w$, we obtain the required claim:
\begin{multline*}
\frac{u(x)}{\|x\|}=\frac{v(x)+w(x)}{\|x\|}\geq \frac{\eta+\langle x^*,x\rangle}{\|x\|}+\frac{w(x)}{\|x\|}
\geq \frac{w(x)}{\|x\|}+\frac{\eta}{\|x\|}-\|x^*\|_{\,\,\overrightarrow{\|x\|\to \infty, x\in S}}\,\,\infty.
\end{multline*}
\end{proof}

\begin{proof}[{\bf Proof of Lemma \bref{lem:LipschitzUpperBound}}]

Let $\phi:[0,1]\to\R$ be defined by $\phi(t):=f(y+t(x-y))$, $t\in [0,1]$. Since $f$ is continuously differentiable on $U$, it follows from the chain rule  that 
$\phi'$ exists, is continuous and $\phi'(t)=\langle f'(y+t(x-y)),x-y\rangle$. 
 By combining this fact with the fundamental theorem of calculus, with the 
  assumption  that $f'$ is Lipschitz continuous along the line segment $[x,y]$, 
   and with the monotonicity of the integral, we see that 
\begin{multline*}
f(x)=\phi(1)=\phi(0)+\int_0^1\phi'(t)dt=f(y)+\int_0^1\langle f'(y+t(x-y)),x-y\rangle dt\\
=f(y)+\int_0^1\langle f'(y),x-y\rangle dt+\int_0^1\langle f'(y+t(x-y))-f'(y),x-y\rangle dt\\
\leq f(y)+\int_0^1\langle f'(y),x-y\rangle dt+\int_0^1|\langle f'(y+t(x-y))-f'(y),x-y\rangle| dt\\
\leq f(y)+\langle f'(y),x-y\rangle+\int_0^1 L \|y+t(x-y)-y\|\|x-y\| dt\\
=f(y)+\langle f'(y),x-y\rangle+L\|x-y\|^2\int_0^1 tdt=
f(y)+\langle f'(y),x-y\rangle+\frac{1}{2}L\|x-y\|^2.
\end{multline*}
\end{proof}

\section*{Acknowledgements}
Part of the work of the first author was done when he was at the Institute of Mathematical and Computer Sciences (ICMC), University of S\~ao Paulo,  S\~ao Carlos, Brazil (2014--2016), and was supported by FAPESP 2013/19504-9. It is a pleasure for him to thank Alfredo Iusem and Jose Yunier Bello Cruz for helpful discussions regarding some of the references. The second author was partially supported by the Israel Science Foundation (Grants 389/12 and 820/17), by the Fund for the Promotion of Research at the Technion and by the Technion General Research Fund.  The third author thanks CNPq  grant 306030/2014-4 and FAPESP 2013/19504-9. All the authors wish to express their thanks to three referees for their feedback which helped to improve the presentation of the paper.

\bibliographystyle{amsplain}
\bibliography{biblio}

\end{document}